\documentclass[a4paper,12pt,leqno]{amsart}
\usepackage{color}
\usepackage{url}
\usepackage{amsmath}
\usepackage{amsthm}
\usepackage{amssymb}
\usepackage{mathrsfs}
\usepackage{here}
\usepackage{orcidlink}

\usepackage{subcaption}
\usepackage{tikz}
\usetikzlibrary{intersections, calc, arrows.meta, patterns}

\allowdisplaybreaks
\usepackage{mathtools}

\numberwithin{equation}{section}

\theoremstyle{definition}
\newtheorem{Definition}{Definition}[section]
\newtheorem{Theorem}[Definition]{Theorem}
\newtheorem{Proposition}[Definition]{Proposition}
\newtheorem{Lemma}[Definition]{Lemma}
\newtheorem{Corollary}[Definition]{Corollary}
\newtheorem{Remark}[Definition]{Remark}

\title{Stability of the critical constant steady state of a Keller--Segel model}
\date{}
\author[N. Miyake]{Nobuhito Miyake}
\address[N. Miyake]{
Faculty of Mathematics, Kyushu University,
Fukuoka-shi, Fukuoka \hbox{819-0395}, Japan
}
\email{miyake@math.kyushu-u.ac.jp}
\author[H. Wakui]{Hiroshi Wakui}
\address[H. Wakui]{
Faculty of Engineering, 
University of Fukui, 
Fukui-shi,
Fukui \hbox{910-8507}, Japan}
\email{hwakui@u-fukui.ac.jp}
\author[T. Yamada]{Tetsuya Yamada}
\address[T. Yamada]{National Institute of Technology(KOSEN), Fukui College, Sabae-shi, Fukui \hbox{916-8507}, Japan}
\email{yamada@fukui-nct.ac.jp}
\subjclass[2020]{35B35, 35B40, 35Q92}
\keywords{stability of constant steady states, asymptotic behavior, chemotaxis systems}

\begin{document}

\newcommand{\norm}[1]{\left|\!\middle|\!\middle|#1\middle|\!\middle|\!\right|}
\newcommand{\Short}[1]{\left\langle#1\right\rangle_{S}}
\newcommand{\Long}[1]{\left\langle#1\right\rangle_{L}}

\maketitle

\begin{abstract}
In this paper, we prove the asymptotic stability of the critical constant steady state for a simplified parabolic--elliptic Keller--Segel system in $\mathbb{R}^N$ ($N \ge 3$), 
which admits a one-parameter family of constant steady states. 
Although the stability threshold for constant steady states is known, the critical case has remained open. 
We also show that the convergence rate in the critical case differs from the rates obtained for previously studied subcritical constant steady states.
\end{abstract}

%============================================================================%
%============================================================================%
\section{Introduction}
\label{sec:intro}
%============================================================================%
%============================================================================%

In this paper, we consider the stability of the critical constant stationary solution to the following Cauchy problem of a simplified parabolic--elliptic Keller--Segel system:
\begin{equation}
\label{eq:KS}
\tag{KS}
	\left\{
	\begin{alignedat}{2}
		\partial_t u 
		- 
		\Delta u 
		+ 
		\nabla \cdot (u \nabla \psi)
		&
		=0,
		&&\quad\text{in}\quad 
		\mathbb{R}^N\times(0, T),\\
		-\Delta \psi+\psi
		&=u
		&&\quad\text{in}\quad 
		\mathbb{R}^N \times (0, T),\\
		u(\cdot, 0)
		&=u_0(\cdot)
		&&\quad\text{in}\quad 
		\mathbb{R}^N,
	\end{alignedat}
	\right.
\end{equation}
where $N \ge 1$ and $T>0$.
When 
$
	u_0 \in L^p(\mathbb{R}^N)
$ 
for some 
$ 
	p \in [1,\infty)
$, 
Problem~\eqref{eq:KS} has been extensively studied from the viewpoints of existence and uniqueness,
blow-up criteria, and the asymptotic behavior of global-in-time solutions; see, e.g., \cite{BCK2017, CPZ2004, F2022, KS2010, KS2011, KS2008, KSY2012, R2011}.
Recently, 
in order to consider a class of solutions that includes constant stationary solutions
$
	(u,\psi)
	=
	(A,A)
$ 
$(A\in\mathbb{R})$, 
several works have employed uniformly local Lebesgue spaces (\cite{CKKW2021, S2021}).
In particular, Cygan, Karch, Krawczyk, and the second author studied the stability of constant stationary solutions 
$
	(u, \psi)
	=
	(A, A)
$ 
for $A\in\mathbb{R}$ and showed the following results (\cite[Theorems~2.3 and 2.4, Remark~4.8]{CKKW2021}):
\begin{enumerate}
	\item
		Let $A \in (-\infty, 1)$ and assume that
		\[
			\left\{
			\begin{split}
				 p = \, & 1  &&\text{if} \qquad N=1,\\
				 \frac{N}2 < p \, & < N &&\text{if} \qquad N \ge 2.
			\end{split}
			\right.
		\]
		Assume that $u_0-A$ is sufficiently small in $L^p(\mathbb{R}^N)$.
		Then problem~\eqref{eq:KS} admits a global-in-time solution such that $u-A$ converges to $0$ in some suitable sense.
	\item
		Let $A\in(1, \infty)$ and
		\[
			\left\{
			\begin{split}
				1 <  p < \, & \infty  &&\text{if} \qquad N=1,\\
				\frac32 \le  p < \, & \infty && \text{if} \qquad N=2,\\
				\frac{N}2 < p < \, & \infty &&\text{if} \qquad N \ge 3.
			\end{split}
			\right.
		\]
		Then the stationary solution $(u,\psi)=(A, A)$ of \eqref{eq:KS} is not stable in the Lyapunov sense under small perturbations from $L^p(\mathbb{R}^N)$.
\end{enumerate}
These results imply that $(u,\psi)=(A,A)$ is asymptotically stable if $A<1$ and is unstable if $A>1$.
In other words, 
$A=1$ is the threshold that determines whether the stationary solution 
$
	(u, \psi)
	=
	(A,A)
$ 
is stable.
See also \cite{WYarXiv} for a related study on the stability of constant stationary solutions to 
an attraction--repulsion chemotaxis model.

\subsection{Main result}

The main objective of this paper is to establish that the ``critical'' stationary solution 
$
	(u, \psi)
	=
	(1, 1)
$ 
of problem~\eqref{eq:KS} with $N\ge3$ is asymptotically stable.
In order to state our main theorem, we introduce some notation.
Set
\begin{alignat}{2}
	\label{eq:kernel}
		G(x, t)
		\coloneqq \, & 
		(4\pi t)^{-\frac{N}{2}}
			\mathrm{e}^{-\frac{|x|^2}{4t}}
		&&\quad \text{for} \quad 
		(x, t) 
		\in 
		\mathbb{R}^N \times (0, \infty),\\
	\label{eq:Bessel}
		K(x)
		\coloneqq \, &  
		\int_0^\infty 
			\mathrm{e}^{-s}G(x,s)
		\, \mathrm{d}s
		&&\quad \text{for} \quad 
		x
		\in
		\mathbb{R}^N \setminus \{ 0 \}.
\end{alignat}
For $p\in[1,\infty]$ we define the uniformly local Lebesgue space $L^{p}_{\rm ul}(\mathbb{R}^N)$ as
\[
	L^{p}_{\rm ul}(\mathbb{R}^N)
	\coloneqq 
	\left\{
		f \in L^1_{\rm loc}(\mathbb{R}^N)
		\, \middle| \, 
		\|
			f
		\|_{L^{p}_{\rm ul}(\mathbb{R}^N)}
		\coloneqq 
		\sup_{z \in \mathbb{R}^N}
		\|
			f
		\|_{L^p(B_1(z))}
		<
		\infty
	\right\},
\]
where $B_r(z)$ denotes the ball of radius $r>0$ centered at $z\in\mathbb{R}^N$.
Moreover, 
we also define the closed subspace 
$
	\mathcal{L}^{p}_{\rm ul}(\mathbb{R}^N)
$ 
of 
$
	L^{p}_{\rm ul}(\mathbb{R}^N)
$ 
as
\[
	\mathcal{L}^p_{\rm ul}(\mathbb{R}^N)
	\coloneqq 
	\overline{BUC(\mathbb{R}^N)}^{\| \cdot \|_{L^{p}_{\rm ul}(\mathbb{R}^N)}},
\]
where $BUC(\mathbb{R}^N)$ is the set of bounded and uniformly continuous functions on $\mathbb{R}^N$.
In this paper, 
we consider solutions to problem~\eqref{eq:KS} in the following sense (see also the relation \eqref{eq:poisson} below):

\begin{Definition}
\label{def:KSsol}
	Let $N\ge3$, 
	$T\in(0, \infty]$, 
	and 
	$
		u_0
		\in 
		L^\frac{N}{2}_{\rm ul}(\mathbb{R}^N)
	$.
	We say that $u$ is a solution to problem~\eqref{eq:KS} in $\mathbb{R}^N\times(0, T)$ if 
	$
		u 
		\in 
		C([0, T); L^\frac{N}{2}_{\rm ul}(\mathbb{R}^N))
	$ 
	with $u(0)=u_0$ and $u$ satisfies
	\[
		u(t)
		=
		G(t) \ast u_0
		-
		\int^t_0
			\nabla 
			\cdot 
			\left[
				G(t-s) \ast (u(s)\nabla K \ast u(s))
			\right]
		\, \mathrm{d}s
	\]
	in $L^\frac{N}{2}_{\rm ul}(\mathbb{R}^N)$ for $t\in(0, T)$.
	If $T=\infty$, 
	$u$ is called a global-in-time solution to problem~\eqref{eq:KS}.
\end{Definition}
To describe the long-time and short-time behavior in a unified manner, we define
$\Long{t}$ and $\Short{t}$ for $t>0$ by
\[
	\Long{t}
	\coloneqq
	\sqrt{1+t^2},
	\qquad 
	\Short{t}
	\coloneqq 
	\dfrac{t}{\Long{t}}=\dfrac{t}{\sqrt{1+t^2}}.
\]
These quantities interpolate smoothly between the short-time regime ($0 < t \ll 1$) and the long-time regime ($t \gg 1$). 
For simplicity, 
we set
$
	\|
		f
	\|_{p}
	\coloneqq 
	\|
		f
	\|_{L^p(\mathbb{R}^N)}
$
for $p\in[1,\infty]$.
The main result in this paper is as follows:
\begin{Theorem}
\label{thm:main_KS}
	Let $N\ge3$.
	Then there exist constants $\varepsilon_*>0$ and $C>0$ with the following property:
	if 
	$
		u_0 
		\in 
		\mathcal{L}^\frac{N}{2}_{\rm ul}(\mathbb{R}^N) 
	$ 
	satisfies
	\begin{equation}
	\label{eq:main_KS_asm}
		\|
			u_0
			-
			1
		\|_{\frac{N}{2}}
		\le 
		\varepsilon_*,
	\end{equation}
	then there exists a global-in-time solution $u$ to problem~\eqref{eq:KS} such that 
	\[
		u, \nabla u
		\in 
		C((0, \infty); \mathcal{L}^{r}_{\rm ul}(\mathbb{R}^N))
	\] 
	and
	\[
		\Short{t}^{\frac{N}{2}(\frac{2}{N}-\frac{1}{r})}
		\Long{t}^{\frac{N}{4}(\frac{2}{N}-\frac{1}{r})}
		\|
			u(t)
			-
			1
		\|_{r}
		+
		\Short{t}^{\frac{N}{2}(\frac{2}{N}-\frac{1}{r})+\frac{1}{2}}
		\Long{t}^{\frac{N}{4}(\frac{2}{N}-\frac{1}{r})+\frac{1}{4}}
		\|
			\nabla u(t)
		\|_{r}
		\le
		C
		\|
			u_0
			-
			1
		\|_{\frac{N}{2}}
	\]
	for $r \in [N/2, \infty]$ and $t>0$.
	In particular, 
	$u(t)-1$ converges to $0$ in $W^{1,r}(\mathbb{R}^N)$ as $t \to \infty$
	for $r \in (N/2, \infty]$.
\end{Theorem}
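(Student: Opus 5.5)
Set $v=u-1$. Since $K\ast 1=1$ and $\nabla K\ast 1=0$, the perturbation $v$ satisfies
\[
\partial_t v-\Delta v+\nabla\cdot(\nabla K\ast v)+\nabla\cdot(v\nabla K\ast v)=0 .
\]
Because $-\Delta K+K=\delta$, we have $\nabla\cdot(\nabla K\ast v)=\Delta K\ast v=K\ast v-v$. The linear part is therefore $L v=\Delta v+v-K\ast v$, with Fourier symbol
\[
-|\xi|^2+1-\frac{1}{1+|\xi|^2}=-|\xi|^2+\frac{|\xi|^2}{1+|\xi|^2}=-\frac{|\xi|^4}{1+|\xi|^2}.
\]
This is the critical feature. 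For $A<1$ the symbol would be $-|\xi|^2(1+(1-A)/(1+|\xi|^2))$, which is heat-like. At $A=1$ the symbol behaves like $-|\xi|^4$ at low frequency and like $-|\xi|^2$ at high frequency. This explains the rates in Theorem~\ref{thm:main_KS}: for small $t$ we get heat-type scaling $t^{-\frac N2(\frac2N-\frac1r)}$, and for large $t$ we get biharmonic scaling $t^{-\frac N4(\frac2N-\frac1r)}$. Indeed, $\Short{t}^{a}\Long{t}^{a/2}\sim t^{a}$ for $t\le1$ and $\sim t^{a/2}$ for $t\ge1$.

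\textbf{Step 1 (linear estimates).} Let $S(t)$ be the semigroup with multiplier $e^{-t|\xi|^4/(1+|\xi|^2)}$. I will prove, for $1\le q\le r\le\infty$ and $k=0,1$,
\[
\|\nabla^k S(t)f\|_r\le C\,\Short{t}^{-\frac N2(\frac1q-\frac1r)-\frac k2}\Long{t}^{-\frac N4(\frac1q-\frac1r)-\frac k4}\|f\|_q,
\]
and the analogous estimate for $S(t)\nabla\cdot$. I plan to split the frequencies with a smooth cutoff $\chi(\xi)$.
\begin{itemize}
\item High frequencies: since $\mathrm{Re}$ of the symbol is $\le -c|\xi|^2$ there, the kernel is exponentially decaying in time for $t\ge1$ and heat-like for $t\le1$.
\item Low frequencies: write the exponent as $-t|\xi|^4+t|\xi|^6/(1+|\xi|^2)$. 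Then show that the kernel $\mathcal F^{-1}[\chi e^{-t|\xi|^4/(1+|\xi|^2)}]$ has $L^1$ norm bounded uniformly in $t$ and obeys biharmonic-type $L^\infty$ bounds $t^{-N/4}$, with an extra $t^{-1/4}$ per derivative. I would do this by rescaling $\xi=t^{-1/4}\eta$ and using standard multiplier estimates: integrate by parts in $\eta$, since the rescaled symbol is smooth with derivatives uniformly bounded in $t\ge1$.
\end{itemize}
Young's inequality then gives the $L^q$–$L^r$ bounds. These also hold in uniformly local spaces, which gives the regularity $u,\nabla u\in C((0,\infty);\mathcal L^r_{\rm ul})$: adding the constant $1$ is harmless, and continuity follows because the kernels lie in $L^1$ and $BUC$ is preserved.

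\textbf{Step 2 (fixed point).} I will work in the Duhamel form $v(t)=S(t)v_0-\int_0^tS(t-s)\nabla\cdot(v\nabla K\ast v)(s)\,ds$, using the norm
\[
\norm{v}=\sup_{t>0}\sum_{r\in\{N/2,\,\text{one }r_1\in(N/2,N)\}}\Short{t}^{\alpha_r}\Long{t}^{\alpha_r/2}\|v(t)\|_r,\qquad \alpha_r=\tfrac N2\left(\tfrac2N-\tfrac1r\right),
\]
together with the analogous gradient term. For the nonlinearity I use $\|\nabla K\ast v\|_{p}\le C\|v\|_q$, valid since $\nabla K\in L^{s}$ for $s<N/(N-1)$. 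I choose exponents so that $v\nabla K\ast v\in L^q$ with $q<N/2$. The Duhamel integral is then estimated by splitting $[0,t/2]$ and $[t/2,t]$, in the short- and long-time regimes separately.

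\textbf{Main obstacle.} The main difficulty is closing the time integral for large $t$, because the decay is only biharmonic. The critical space $L^{N/2}$ is scale-invariant for the heat part, but the long-time scaling is different. I will check the balance of exponents: the nonlinear term $v\nabla K\ast v$ at time $s$ decays like $s^{-\frac N4(\cdot)}$, and $S(t-s)\nabla\cdot$ gives an extra $(t-s)^{-1/4}$. The integral $\int_0^{t/2}$ needs the source decay exponent in $L^q$ to exceed $1$. This forces $q$ close to $1$ or uses the extra decay of $\nabla K\ast v$ at low frequency. Note that $\widehat{\nabla K}=i\xi/(1+|\xi|^2)$ gives an extra factor $|\xi|\sim s^{-1/4}$ at low frequency, and I expect this to be the key gain. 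If a direct estimate fails at the endpoint, I will keep $\nabla K\ast v$ as an $L^\infty$-in-time term with a gained factor $s^{-1/4}$, via the low/high splitting. Once the contraction closes for small $\varepsilon_*$, the $r=\infty$ estimates and the $W^{1,r}$ estimates follow by one more Duhamel bootstrap with Step 1. Convergence in $W^{1,r}$ for $r>N/2$ is then immediate, since the exponents are positive.
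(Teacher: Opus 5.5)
The gap is in your Step~2: the nonlinear estimate is left unresolved, and your diagnosis of the difficulty is off. Your Step~1 is fine as an alternative to the paper's linear theory. Rescaling $\xi=t^{-1/4}\eta$ on the low-frequency piece replaces the weighted-$L^2$/Hardy--Littlewood--Sobolev argument of Section~\ref{subsec:lem-1}. On the high-frequency piece, though, a lower bound on the symbol does not control the $L^1$ norm of the kernel; you also need bounds on its $\xi$-derivatives, as in Lemma~\ref{lem:h-3}.

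\textbf{The large-time gain is not an endpoint fix.} Your remark that $\widehat{\nabla K}\sim i\xi$ should give an extra $s^{-1/4}$ is the right heuristic, but it must be built in from the start. Put $v(s)\in L^a$ in the first factor, bound $\nabla K\ast v(s)$ by $\|v(s)\|_b$ as you propose, put the product in $L^m$, and aim at $L^c$. For large $t$, the part $\int_1^{t/2}$ of the Duhamel term is then bounded only by $t^{E}$, where $E=\frac N4(\frac1a+\frac1b)-\frac N4(\frac1m-\frac1c)-\frac14\ge\frac{N}{4c}-\frac14$ because $\frac1m\le\frac1a+\frac1b$. The theorem requires $t^{\frac{N}{4c}-\frac12}$. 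This shortfall occurs for every admissible choice of exponents, and pushing the source exponent towards $1$ does not help. So a full factor $s^{-1/4}$ must be gained in the large-time bound of $v\nabla K\ast v$, and it can only come from derivatives of $v$. The paper gets it, for $s\ge1$, from
$\|f\nabla K\ast g\|_{(\frac1a+\frac1b-\frac1N)^{-1}}\le\|f\|_{(\frac1b-\frac1N)^{-1}}\|K\ast\nabla g\|_a\le C\|\nabla f\|_b\|\nabla g\|_a$,
using Sobolev for $f$ and $K\in L^1$ for $\nabla g$. It uses the bound $C\|f\|_a\|g\|_b$ only for $s\le2$. This is why $\nabla v$ has to be in the norm. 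Your norm contains it, but your plan never uses it for this purpose.

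\textbf{The gradient part of the norm does not close with your tools.} You have only the combined $\Short{t}$/$\Long{t}$ bounds for $S(t)$ and the split $(0,t/2)\cup(t/2,t)$. On $(t/2,t)$, putting two derivatives on $S(t-s)$ gives the non-integrable singularity $(t-s)^{-1-\frac N2(\frac1m-\frac1c)}$. Moving the divergence onto the nonlinearity, via $\nabla\cdot(f\nabla K\ast g)=f\Delta K\ast g+\nabla f\cdot\nabla K\ast g$, costs a factor $t^{1/4}$ for large $t$. The reason is that, with only $v$ and $\nabla v$ in the norm, this source decays no faster than $f\nabla K\ast g$ (both are bounded through $\|\nabla f\|\,\|\nabla g\|$). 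Meanwhile, at low frequency $\nabla S(\tau)$ decays more slowly than $\nabla^2 S(\tau)$ by a factor $\tau^{1/4}$.

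The paper handles this by keeping $F_1$ and $F_2$ separate, with pure power bounds valid for all $t>0$ (Proposition~\ref{prop:semigroup}-(2)). The divergence is moved onto the nonlinearity only for the heat-like $F_1$. Both derivatives stay on $F_2$, whose bound $(t-s)^{-\frac N4(\frac1m-\frac1c)-\frac12}$ is integrable at $s=t$. An extra split at $s=t-1$ would also work. In the paper, integrating the $F_1$ piece over all of $(0,t)$ for short times imposes $\frac1a+\frac1b>\frac3N$, which is why it takes $q<2N/3$. An arbitrary $r_1\in(N/2,N)$ would need a further split.

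\textbf{Smaller omissions.}
\begin{enumerate}
\item Theorem~\ref{thm:main_KS} asks for a solution in the sense of Definition~\ref{def:KSsol}: heat semigroup, nonlinearity $u\nabla K\ast u$, in $L^{N/2}_{\rm ul}$. The paper derives this from the $F$-formulation by a duality argument; your proposal does not address it.
\item Having $u\in C([0,\infty);L^{N/2}_{\rm ul})$ with $u(0)=u_0$ requires running the fixed point in the class where the auxiliary norm tends to $0$ as $t\searrow0$ (Corollary~\ref{cor:semigroup}-(3)).
\item The $L^\infty$ bound for $\nabla v$ cannot be reached in one bootstrap step from exponents below $N$. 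Every available source then lies in $L^m$ with $m<N$, and the best bound $\tau^{-\frac{N}{2m}-\frac12}$ for $\nabla F_1(\tau)\colon L^m\to L^\infty$ is not integrable at $\tau=0$. The paper passes through $q_1$ and then some $q_2>N$, using $\Theta_1$.
\end{enumerate}
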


\begin{Remark}
	In Appendix~\ref{sec:unique}, 
	we prove conditional uniqueness of the solution to problem~\eqref{eq:KS} in the sense of Definition~\ref{def:KSsol} (see Proposition~\ref{prop:unique}).
	By Proposition~\ref{prop:unique}, 
	the solution constructed in Theorem~\ref{thm:main_KS} is the unique solution in some suitable function space.
\end{Remark}

To prove Theorem~\ref{thm:main_KS}, 
we follow the argument in \cite[Theorem~2.3]{CKKW2021} and consider the perturbation problem associated with \eqref{eq:KS} around the constant stationary solution 
$
	(u, \psi)
	=
	(1, 1)
$.
More precisely, 
by setting 
$
	(v, \varphi)
	\coloneqq 
	(u-1, \psi-1)
$, 
a pair of functions
$(v, \varphi)$ satisfies the following Cauchy problem:
\begin{equation}
\label{eq:KS_A}
\tag{P}
	\left\{
	\begin{alignedat}{2}
		\partial_t v
		-
		\Delta v
		+
		\Delta \varphi
		+
		\nabla \cdot (v \nabla \varphi)
		&=0,
		&&\quad \text{in} \quad 
		\mathbb{R}^N \times (0, T),\\
		-
		\Delta \varphi
		+
		\varphi
		&=v
		&&\quad \text{in} \quad 
		\mathbb{R}^N \times (0, T),\\
		v(\cdot, 0)
		&=v_0(\cdot)
		&&\quad\text{in}\quad \mathbb{R}^N.
	\end{alignedat}
	\right.
\end{equation}
The main difference between Theorem~\ref{thm:main_KS} and \cite[Theorem~2.3]{CKKW2021} comes from the behavior of the Fourier symbol for the linearized equation of \eqref{eq:KS}.
Indeed, the linearized equation of \eqref{eq:KS} around 
$
	(u, \psi)
	=
	(A, A)
$ 
is 
$
	\partial_t w
	-
	\Delta w
	+
	A\Delta K \ast w
	=
	0
$ 
and the Fourier symbol for $-\Delta +A\Delta K\ast $ is given by 
\[
	h_A(\xi)
	\coloneqq 
	|\xi|^2
	-
	\dfrac{
			A|\xi|^2
		}{
			1+|\xi|^2
		}
	=
	\dfrac{
			(1-A)|\xi|^2+|\xi|^4
		}{
			1+|\xi|^2
		}.
\]
Since $h_A(\xi) \sim |\xi|^2$ if $A<1$, 
it is possible to treat the linearized equation like the heat equation.
On the other hand, 
if $A=1$, 
it holds that
\[
	h_{A}(\xi)
	\sim 
	\left\{
	\begin{alignedat}{2}
			&|\xi|^4
			&&\quad \text{for} \quad 
			|\xi|\ll 1,\\
			&|\xi|^2
			&&\quad \text{for} \quad 
			|\xi|\gg 1.
	\end{alignedat}
	\right.
\]
Thus, 
the result of \cite{CKKW2021} cannot be directly extended to the critical case $A = 1$.
Moreover, 
since the low-frequency part of $h_A(\xi)$ dominates the asymptotic behavior of solutions at $t\to\infty$, 
the asymptotic behavior of solutions to problem~\eqref{eq:KS_A} is different from the case 
$
	A
	\in
	[0, 1)
$.
The critical case $A = 1$ exhibits a fundamentally different structure, 
as the low-frequency behavior of the Fourier symbol is governed by $|\xi|^4$ rather than $|\xi|^2$. 
This invalidates existing approaches based on heat-like dissipation. 
We overcome this difficulty by developing a new framework combining frequency decomposition and refined time-weighted estimates.

To conclude this subsection, we briefly explain why the initial
perturbation is assumed to be in $L^{\frac{N}{2}}(\mathbb{R}^N)$ in
Theorem~\ref{thm:main_KS}. 
Since $h_1(\xi)=|\xi|^4/(1+|\xi|^2)$ is the linear symbol of \eqref{eq:KS_A}, and since the symbol of $\nabla K$ is
$i\xi/(1+|\xi|^2)$, the low-frequency part of \eqref{eq:KS_A} is formally governed
by
\begin{equation}\label{eq;LS}
  \partial_t v+(-\Delta)^2v+\nabla\cdot(v\nabla v)=0,
\end{equation}
whereas the high-frequency part is formally governed by
\begin{equation}\label{eq;GS}
  \partial_t v-\Delta v+\nabla\cdot
  \bigl(v\nabla(-\Delta)^{-1}v\bigr)=0.
\end{equation}
Since both model equations \eqref{eq;LS} and \eqref{eq;GS} leave 
$
	L^{\frac{N}2}(\mathbb{R}^N)
$ 
invariant under their natural scalings,
$
	L^{\frac{N}2}(\mathbb{R}^N)
$ 
is a natural space for the initial perturbation.

\subsection{Organization} 

This paper is organized as follows.
In Section~\ref{sec:linearized}, 
we derive estimates for the fundamental solution $F$  to $\partial_t w-\Delta w+\Delta K\ast w=0$, which is defined by \eqref{fn;F} below.
More precisely, 
we separate $F$ into two parts, the high- and low-frequency parts, and derive time-decay estimates for them.
Based on these estimates, 
we give bilinear estimates associated with problem~\eqref{eq:KS_A} and prove Theorem~\ref{thm:main} in Section~\ref{sec:perturbation}.
As we will explain in Section~\ref{sec:perturbation}, 
Theorem~\ref{thm:main_KS} is a direct consequence of Theorem~\ref{thm:main}.
In Appendix~\ref{sec:unique}, 
we prove a conditional uniqueness result for problem~\eqref{eq:KS}.

\subsection{Notation} 

Before closing this section, we 
introduce notation used throughout this paper. 
Let 
$
	\mathbb{Z}_{+}
$ 
denote
the set of nonnegative integers. 
For 
$ 
 x=(x_1,x_2,\ldots,x_N) \in \mathbb{R}^N
$ 
and 
$
 \alpha
 =
 (
  \alpha_1, 
  \alpha_2,
  \ldots,
  \alpha_N
 )
 \in 
 \mathbb{Z}_+^N
$, 
set 
$ 
  x^\alpha
  \coloneqq 
  x_1^{\alpha_1}
  x_{2}^{\alpha_2}
  \cdots 
  x_{N}^{\alpha_N}
$ 
and 
$
 |\alpha|
 \coloneqq
  \alpha_1
  +
  \alpha_2
  +
  \cdots
  +
  \alpha_N
$. 
We denote
$
  \partial_t
  \coloneqq
 \frac{\partial}{\partial t}
$ 
and 
$
 \partial_j
 \coloneqq
 \frac{\partial}{\partial x_j}
$,
and define
$
 \partial_x^\alpha
 \coloneqq
 \partial_1^{\alpha_1}
 \partial_2^{\alpha_2}
 \cdots
 \partial_N^{\alpha_N}
$ 
for 
$
 \alpha
 =
 (
  \alpha_1,
  \alpha_2,
  \ldots, 
  \alpha_N
 )
 \in 
 \mathbb{Z}_+^N
$. 
Let $C$ be a positive constant which may change from line to line. In particular, we write $C(\gamma_1,\ldots,\gamma_k)$ for a positive constant depending on $\gamma_1, \ldots, \gamma_k$.

%============================================================================%
%============================================================================%
\section{{Linearized equation}}
\label{sec:linearized}
%============================================================================%
%============================================================================%
In this section, 
we study properties of the fundamental solution to the following linear equation, 
which corresponds to problem~\eqref{eq:KS_A}:
\begin{equation}
\label{eq:L}
		\partial_t w
		-
		\Delta w
		+
		\Delta K \ast w
		=
		0
		\quad \text{in} \quad 
		\mathbb{R}^N \times (0, \infty).
\end{equation}
Let $F$ be the fundamental solution to \eqref{eq:L}, 
that is,
\begin{equation}\label{fn;F}
	F(x, t)
	\coloneqq 
	(2\pi)^{-N}
	\int_{\mathbb{R}^N}
		\mathrm{e}^{-th(\xi)+ix\cdot \xi}
	\, \mathrm{d}\xi,
	\quad \text{where} \quad 
	h(\xi)
	\coloneqq 
	\dfrac{|\xi|^4}{1+|\xi|^2}.
\end{equation}
Let $\Phi \in C^\infty_{\rm c}(\mathbb{R}^N)$ satisfy
\[
	0
	\le
	\Phi
	\le 
	1
	\quad \text{on} \quad 
	\mathbb{R}^N,
	\quad
	\Phi
	\equiv 
	1
	\quad \text{on} \quad 
	B_1(0),\quad 
	\Phi 
	\equiv 
	0
	\quad \text{on} \quad 
	\mathbb{R}^N \setminus B_2(0).
\]
Setting 
$
	\Phi_1(\xi)
	\coloneqq 
	1-\Phi(\xi)
$ 
and
$
	\Phi_2(\xi)
	\coloneqq 
	\Phi(\xi)
$, 
we have 
$
	F(x, t)
	=
	F_1(x, t)
	+
	F_2(x, t)
$, 
where
\begin{equation}\label{fn;Fj}
	F_j(x, t)
	\coloneqq 
	(2\pi)^{-N}
	\int_{\mathbb{R}^N}
		\Phi_{j}(\xi)
		\mathrm{e}^{-th(\xi)+ix\cdot\xi}
	\, \mathrm{d}\xi
\end{equation}
for $j \in \{ 1, 2 \}$. 
The aim of this section is to prove a variant of the $L^p$-$L^q$ estimates.
For simplicity, 
we set 
$\mathcal{L}^p\coloneqq \overline{\mathcal{S}}^{\|\cdot\|_{p}}$ for $p\in [1,\infty]$, that is, 
\[
	\mathcal{L}^p
	=
	L^p(\mathbb{R}^N) 
	\quad \text{if} \quad 1\le p<\infty, 
	\quad 
	\mathcal{L}^\infty
	=
	\{
		f \in C(\mathbb{R}^N) 
		\mid  
		|f(x)| \to 0 \, \text{as} \, |x| \to \infty
	\}.
\]
Here, $\mathcal{S}$ is the set of rapidly decreasing functions on $\mathbb{R}^N$.
\begin{Proposition}
\label{prop:semigroup}
	Let 
	$N \ge 1$, 
	$\alpha\in\mathbb{Z}_+^N$, 
	$1\le p\le q\le \infty$, 
	and $f\in \mathcal{L}^p$.
	\begin{enumerate}
		\item
			It holds that $\partial_x^\alpha [F(t)\ast f]\in \mathcal{L}^{q}$ for $t>0$.
			Moreover, there exists $C=C(N, p, q, \alpha)>0$ such that
			\begin{equation}\label{lqlp_est;F}
				\|
					\partial_{x}^\alpha F(t) \ast f
				\|_{q}
				\le 
				C
				\Short{t}^{-\frac{N}{2}(\frac{1}{p}-\frac{1}{q})-\frac{|\alpha|}{2}}
				\Long{t}^{-\frac{N}{4}(\frac{1}{p}-\frac{1}{q})-\frac{|\alpha|}{4}}
				\|
					f
				\|_{p}
			\end{equation}
			for $t>0$.
		\item
			Assume that $N\ge2$ and $|\alpha|\ge1$.
			Then $\partial_x^\alpha [F_j(t)\ast f]\in \mathcal{L}^{q}$ for $j\in\{1,2\}$ and $t>0$.
			Moreover, there exists $C=C(N, p, q, \alpha)>0$ such that
			\begin{equation}\label{lqlp_est;Fj}
				\|
					\partial_{x}^\alpha F_j(t) \ast f
				\|_{q}
				\le 
				C
				t^{-\frac{N}{2j}(\frac{1}{p}-\frac{1}{q})-\frac{|\alpha|}{2j}}
				\|
					f
				\|_{p}
			\end{equation}
			for 
			$j \in\{1,2\}$ 
			and $t>0$.
	\end{enumerate}
\end{Proposition}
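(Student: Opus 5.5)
We reduce everything to $L^1$ and $L^r$ bounds on the kernels via Young's inequality. For $1\le p\le q\le\infty$ let $1+\frac1q=\frac1r+\frac1p$. Then $\|\partial_x^\alpha F_j(t)\ast f\|_q\le \|\partial_x^\alpha F_j(t)\|_r\|f\|_p$, and by interpolation $\|g\|_r\le\|g\|_1^{1/r}\|g\|_\infty^{1-1/r}$. So it suffices to bound $\|\partial_x^\alpha F_j(t)\|_1$ and $\|\partial_x^\alpha F_j(t)\|_\infty$ with the claimed exponents corresponding to $(p,q)=(1,1)$ and $(1,\infty)$. The membership in $\mathcal{L}^q$ follows by density: for $f\in\mathcal{S}$ the convolution with an $L^1$ kernel whose Fourier transform is smooth gives a $C_0$ function (or a Schwartz-type one), and the estimate passes to the closure.

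\emph{Low frequency $F_2$.} Write $h(\xi)=|\xi|^4 m(\xi)$ with $m(\xi)=(1+|\xi|^2)^{-1}$, which lies in $[1/5,1]$ on $B_2$. The $L^\infty$ bound is immediate: $|\partial^\alpha F_2(x,t)|\le C\int_{B_2}|\xi|^{|\alpha|}\mathrm{e}^{-ct|\xi|^4}\,\mathrm{d}\xi\le Ct^{-\frac{N+|\alpha|}{4}}$. For the $L^1$ bound I would rescale $\xi=t^{-1/4}\eta$, $x=t^{1/4}y$, which reduces matters to showing that
\[
g_t(y)=\int\eta^\alpha\Phi(t^{-1/4}\eta)\mathrm{e}^{-|\eta|^4m(t^{-1/4}\eta)}\mathrm{e}^{iy\cdot\eta}\,\mathrm{d}\eta
\]
is bounded in $L^1$ uniformly in $t\ge1$. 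For this, bound $(1+|y|^{2k})|g_t(y)|$ by $L^1_\eta$ norms of $(1-\Delta_\eta)^k$ applied to the symbol, with $2k>N$. Derivatives of $\Phi(t^{-1/4}\cdot)$ and of $m(t^{-1/4}\cdot)$ only gain powers $t^{-1/4}\le1$, and Gaussian-type decay $\mathrm{e}^{-c|\eta|^4}$ absorbs polynomial factors. For $t\le1$ the symbol is smooth and compactly supported with bounded derivatives, so $\|\partial^\alpha F_2(t)\|_1\le C$. This is consistent with $t^{-|\alpha|/4}$ only if $|\alpha|=0$, so for $|\alpha|\ge1$ at small $t$ part (2) requires more care. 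Part (2) asks for $t^{-|\alpha|/4}$ at all $t$, which blows up at $t\to0$, so a bound of $C$ is fine there since $C\le Ct^{-|\alpha|/4}$ for $t\le1$. The same holds for the $L^\infty$ bound and hence for the interpolated ones.

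\emph{High frequency $F_1$.} Here $h(\xi)=|\xi|^2-1+(1+|\xi|^2)^{-1}$, so $\mathrm{e}^{-th}=\mathrm{e}^{t}\mathrm{e}^{-t|\xi|^2}\mathrm{e}^{-t/(1+|\xi|^2)}$. This factorization is only useful for small $t$; for all $t$ I would use instead $h(\xi)\ge\frac12|\xi|^2$ on $|\xi|\ge1$ together with $h\ge \frac12|\xi|^2+c$ there. I will use $F_1(t)=G(t/2)\ast\mathcal{F}^{-1}[\Phi_1\mathrm{e}^{-t(h-|\xi|^2/2)}]$, where the second factor is a multiplier bounded on $L^1$ (an $L^1$ kernel plus nothing at the origin). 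I expect this to be the main obstacle, since $\Phi_1$ does not decay. To handle it, I would write $\Phi_1\mathrm{e}^{-t(h-|\xi|^2/2)}=\mathrm{e}^{-t(|\xi|^2/2-1)}\cdot\Phi_1\mathrm{e}^{-t/(1+|\xi|^2)}$ and expand $\Phi_1\mathrm{e}^{-t/(1+|\xi|^2)}-\Phi_1\mathrm{e}^{0}$ as a symbol of order $-2$, whose kernel is in $L^1$ with norm $\le C(1+t)^{k}$. The factor $\Phi_1$ itself gives $\delta-\check\Phi$, which is a finite measure. Combined with $\mathrm{e}^{-ct}$ gained from $h\ge c$ on $\operatorname{supp}\Phi_1$, this yields $\|\partial^\alpha F_1(t)\|_1\le Ct^{-|\alpha|/2}\mathrm{e}^{-ct}$ and $\|\partial^\alpha F_1\|_\infty\le Ct^{-(N+|\alpha|)/2}\mathrm{e}^{-ct}$. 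These imply (2) for $j=1$.

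\emph{Part (1).} Adding the two pieces, for $t\le1$ the $F_1$ bound dominates, giving $\Short{t}\sim t$ and $\Long{t}\sim1$. For $t\ge1$ the $F_2$ bound dominates and the exponential decay of $F_1$ is harmless, giving $\Long{t}\sim t$ and $\Short{t}\sim1$. The case $\alpha=0$ is not covered by (2) for small $t$, but $\|F_2(t)\|_1\le C$ and $\|F_2(t)\|_\infty\le C$ suffice there. This gives \eqref{lqlp_est;F}.
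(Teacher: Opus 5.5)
Your reduction to kernel bounds (Young's inequality, $\|g\|_r\le\|g\|_1^{1/r}\|g\|_\infty^{1-1/r}$, density of $\mathcal{S}$) is the same as the paper's. Two of your kernel arguments are sound alternatives to the paper's route through $\|f\|_1\le C\|f\|_2^{1-\frac{N}{2k}}\||\cdot|^kf\|_2^{\frac{N}{2k}}$ and Hardy--Littlewood--Sobolev:
\begin{itemize}
\item for $F_2$, the rescaling $\xi=t^{-1/4}\eta$ (which works because $|\eta|^4$ is a polynomial) followed by integer-order integration by parts;
\item for $F_1$ with $t\le1$, the heat kernel convolved with the finite measure $\mathrm{e}^{t}\mathcal{F}^{-1}[\Phi_1\mathrm{e}^{-t/(1+|\xi|^2)}]=\mathrm{e}^{t}(\delta-\check\Phi+R_t)$.
\end{itemize}
They even avoid the restrictions $N\ge2$, $|\alpha|\ge1$ that the paper's condition $N+(|\alpha|-2m)r>0$ imposes in part~(2).

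The gap is the large-time $L^1$ bound for $\partial_x^\alpha F_1(t)$. You need it both for (2) with $j=1$ (decay $t^{-|\alpha|/2}$, or boundedness if $\alpha=0$) and for (1) at $t\ge1$, and as written this step fails for two reasons.
\begin{itemize}
\item The inequality $h\ge\frac12|\xi|^2+c$ on $|\xi|\ge1$ is false: $h(\xi)-\frac12|\xi|^2=\frac12|\xi|^2-1+(1+|\xi|^2)^{-1}$ vanishes at $|\xi|=1$, so the split at $\frac12|\xi|^2$ is exactly borderline.
\item Your further factorization reintroduces exponential growth. The factor $\mathrm{e}^{-t(|\xi|^2/2-1)}$ is the symbol of $\mathrm{e}^{t}G(t/2)$, whose $L^1$ norm is $\mathrm{e}^{t}$. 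Multiplying kernel norms in $G(t/2)\ast\mathrm{e}^{t}G(t/2)\ast(\delta-\check\Phi+R_t)$ therefore only gives $\mathrm{e}^{t}(1+t)^k$.
\end{itemize}
The pointwise bound $\mathrm{e}^{-th}\le\mathrm{e}^{-ct}$ on the support of $\Phi_1$ does not by itself control the $L^1$ norm of the kernel. The heat factor has $\|G(t/2)\|_1=1$ for all $t$ and carries no decay. So ``combined with $\mathrm{e}^{-ct}$'' is not a consequence of the decomposition you set up.

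The repair uses the tool you already applied to $F_2$. For $t\ge1$, integrate by parts directly against $\xi^\alpha\Phi_1(\xi)\mathrm{e}^{-th(\xi)}$. On $|\xi|\ge1$ one has $|\partial_\xi^\gamma h|\le C|\xi|^{2}$ and $h\ge\frac14|\xi|^2+\frac14$. Hence every $\partial_\xi^\beta(\xi^\alpha\Phi_1\mathrm{e}^{-th})$ is bounded by $C(1+t)^{|\beta|}(1+|\xi|)^{|\alpha|+2|\beta|}\mathrm{e}^{-t/4}\mathrm{e}^{-t|\xi|^2/4}$, whose $L^1_\xi$ norm is at most $C\mathrm{e}^{-t/8}$ for $t\ge1$. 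Taking $|\beta|\le2k$ with $2k>N$ gives $\|\partial_x^\alpha F_1(t)\|_1\le C\mathrm{e}^{-t/8}$ for $t\ge1$. Together with your small-time bound and $\|\partial_x^\alpha F_1(t)\|_\infty\le Ct^{-\frac{N+|\alpha|}{2}}\mathrm{e}^{-t/4}$, this closes (2) for $j=1$ and the large-time part of (1).

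If you prefer to keep the factorization, take a heat fraction strictly below $\frac12$, e.g.\ $\Phi_1\mathrm{e}^{-th}=\mathrm{e}^{-t/4}\,\mathrm{e}^{-t|\xi|^2/4}\,\Phi_1\mathrm{e}^{-t(h-\frac14-\frac14|\xi|^2)}$. The exponent in the last factor is then nonnegative on the support of $\Phi_1$, and that factor's kernel has $L^1$ norm polynomial in $t$.
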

The estimates \eqref{lqlp_est;F} and \eqref{lqlp_est;Fj} in Proposition~\ref{prop:semigroup} immediately follow from the H\"{o}lder inequality, the Young inequality, the density of $\mathcal{S}$, and the following two lemmas:
\begin{Lemma}
\label{lem:L^infty}
	Let $N\ge1$ and $\alpha \in \mathbb{Z}^N_{+}$.
	Then there exists $C=C(N, \alpha)>0$ such that
	\[
		\|
		\partial_x^\alpha 
			F(t)
		\|_{\infty}
		\le 
		C
		\Short{t}^{-\frac{N}{2}-\frac{|\alpha|}{2}}
		\Long{t}^{-\frac{N}{4}-\frac{|\alpha|}{4}}, 
		\quad 
		\|
			\partial_x^\alpha 
			F_j(t)
		\|_{\infty}
		\le 
		C
		t^{-\frac{N}{2j}-\frac{|\alpha|}{2j}},
	\]
	for $j\in\{1, 2\}$ and $t>0$.
\end{Lemma}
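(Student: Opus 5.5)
The plan is to bound every $L^\infty$ norm by the $L^1$ norm of the Fourier transform. Since $\partial_x^\alpha F_j(x,t)=(2\pi)^{-N}\int (i\xi)^\alpha \Phi_j(\xi)\mathrm{e}^{-th(\xi)+ix\cdot\xi}\,\mathrm{d}\xi$, we have
\[
	\|\partial_x^\alpha F_j(t)\|_\infty
	\le
	(2\pi)^{-N}\int_{\mathbb{R}^N}|\xi|^{|\alpha|}\Phi_j(\xi)\mathrm{e}^{-th(\xi)}\,\mathrm{d}\xi ,
\]
and similarly for $F$ with $\Phi_j$ replaced by $1$. Everything then reduces to elementary lower bounds for $h$.

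\emph{High-frequency part $F_1$.} On the support of $\Phi_1$ we have $|\xi|\ge1$, hence $h(\xi)=|\xi|^4/(1+|\xi|^2)\ge |\xi|^2/2$. Therefore
\[
	\int |\xi|^{|\alpha|}\Phi_1\mathrm{e}^{-th}\,\mathrm{d}\xi
	\le
	\int_{|\xi|\ge1}|\xi|^{|\alpha|}\mathrm{e}^{-t|\xi|^2/2}\,\mathrm{d}\xi .
\]
Extending the domain to all of $\mathbb{R}^N$ and scaling $\xi=t^{-1/2}\eta$ gives the bound $Ct^{-\frac N2-\frac{|\alpha|}2}$, which is the case $j=1$. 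The same integral also decays like $\mathrm{e}^{-t/4}$ times a constant for $t\ge1$: split $\mathrm{e}^{-t|\xi|^2/2}\le \mathrm{e}^{-t/4}\mathrm{e}^{-t|\xi|^2/4}$ on $|\xi|\ge1$. This exponential decay is used below.

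\emph{Low-frequency part $F_2$.} On the support of $\Phi_2$ we have $|\xi|\le2$, hence $h(\xi)\ge|\xi|^4/5$. Therefore
\[
	\int |\xi|^{|\alpha|}\Phi_2\mathrm{e}^{-th}\,\mathrm{d}\xi
	\le
	\int_{\mathbb{R}^N}|\xi|^{|\alpha|}\mathrm{e}^{-t|\xi|^4/5}\,\mathrm{d}\xi
	=Ct^{-\frac N4-\frac{|\alpha|}4}
\]
by the scaling $\xi=t^{-1/4}\eta$. This is the case $j=2$. On the support of $\Phi_2$ the integrand is also trivially bounded, so the integral is $\le C$ for all $t>0$.

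\emph{Full kernel $F$.} We write $F=F_1+F_2$ and treat two time regimes.
\begin{itemize}
	\item For $0<t\le1$ we have $\Short{t}\sim t$ and $\Long{t}\sim1$. The target bound is then $t^{-\frac N2-\frac{|\alpha|}2}$. It dominates both the $F_1$ bound and the bound $C\le Ct^{-\frac N2-\frac{|\alpha|}{2}}$ for $F_2$.
	\item For $t\ge1$ we have $\Short{t}\sim1$ and $\Long{t}\sim t$. The target bound is then $t^{-\frac N4-\frac{|\alpha|}4}$. This is exactly the $F_2$ bound, and the $F_1$ contribution is $O(\mathrm{e}^{-t/4})$, which is smaller.
\end{itemize}
Since $\Short{t}$ and $\Long{t}$ are comparable to $\min\{t,1\}$ and $\max\{1,t\}$ respectively, with absolute constants, the two regimes combine into the stated estimate.

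There is no real obstacle here. The only point needing care is to make the small-time and large-time bounds for $F_1$ and $F_2$ consistent with the weights $\Short{t}$ and $\Long{t}$. This is why the exponential large-time decay of the $F_1$ integral and the uniform boundedness of the $F_2$ integral are recorded separately.
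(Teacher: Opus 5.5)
Your proposal is correct and follows essentially the paper's route. The paper bounds $\|\partial_x^\alpha F_j(t)\|_\infty$ by $\||\cdot|^{|\alpha|}\mathrm{e}^{-th}\chi_j\|_1$ and invokes Lemma~\ref{lem:h-4}, whose proof is the same argument as yours:
\begin{itemize}
	\item a high/low-frequency split using lower bounds on $h$ (Lemma~\ref{lem:h-2});
	\item scaling, with an $\mathrm{e}^{-t/4}$ factor at high frequency and a $\min\{1,t^{-\frac{N}{4}-\frac{|\alpha|}{4}}\}$ bound at low frequency;
	\item the same small-/large-time matching with $\Short{t}$ and $\Long{t}$.
\end{itemize}
The only difference is cosmetic: on $|\xi|\ge1$ you use $h\ge|\xi|^2/2$ and extract $\mathrm{e}^{-t/4}$ by splitting the exponent, whereas the paper uses $h\ge\frac14|\xi|^2+\frac14$ directly.
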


\begin{Lemma}
\label{lem:L^1}
	Let 
	$N \ge 1$ 
	and $\alpha\in\mathbb{Z}_+^N$.
	\begin{enumerate}
		\item
			There exists $C=C(N, \alpha)>0$ such that
			\[
				\|
					\partial_x^\alpha F(t)
				\|_1
				\le 
				C
				\Short{t}^{-\frac{|\alpha|}{2}}
				\Long{t}^{-\frac{|\alpha|}{4}}, 
			\]
			for $t>0$.
		\item
			 Assume that $N\ge2$ and $|\alpha|\ge1$.
			Then there exists $C=C(N, \alpha)>0$ such that
			\[
				\|
					\partial_x^{\alpha} 
					F_j(t)
				\|_1
				\le 
				C
				t^{-\frac{|\alpha|}{2j}}
			\]
			for $j \in \{ 1, 2 \}$ and $t>0$.
	\end{enumerate}
\end{Lemma}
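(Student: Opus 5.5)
We need to prove Lemma~\ref{lem:L^1}, the $L^1$ bounds for $\partial_x^\alpha F$ and $\partial_x^\alpha F_j$. The plan is to reduce everything to weighted $L^2$ bounds on the Fourier side and then use a Carlson--Beurling type inequality. Concretely, for a function $g$ on $\mathbb{R}^N$ and $m>N/2$, Cauchy--Schwarz on $\{|x|\le R\}$ and on $\{|x|>R\}$ gives
\[
\|g\|_1\le C R^{\frac N2}\|g\|_2+C R^{\frac N2-m}\||x|^m g\|_2 .
\]
Plancherel turns the two norms into $\|\widehat g\|_2$ and $\|\nabla_\xi^m\widehat g\|_2$; optimizing in $R$ then bounds $\|g\|_1$ by $\|\widehat g\|_2^{1-\frac{N}{2m}}\|\nabla_\xi^m \widehat g\|_2^{\frac{N}{2m}}$.

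\textbf{Step 1: scale-invariant piece.} For the low-frequency part $F_2$ we work at the natural scale $|\xi|\sim t^{-1/4}$. The symbol $(i\xi)^\alpha\Phi(\xi)e^{-th(\xi)}$ is written as $e^{-t|\xi|^4}$ times $e^{t|\xi|^6/(1+|\xi|^2)}$, since $h=|\xi|^4-|\xi|^6/(1+|\xi|^2)$. This second factor is not bounded uniformly, so we work directly with $h$ instead. On $\operatorname{supp}\Phi$ we have $h\ge c|\xi|^4$, and the derivatives satisfy $|\partial^\beta_\xi h|\le C|\xi|^{4-|\beta|}$. By induction (Faà di Bruno),
\[
|\partial_\xi^\beta e^{-th}|\le C\sum_k (t|\xi|^4)^k|\xi|^{-|\beta|}e^{-ct|\xi|^4}\le C|\xi|^{-|\beta|}e^{-c't|\xi|^4}.
\]
The derivatives falling on $\Phi$ are harmless. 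Applying the interpolation inequality with $m>N/2$ and computing the resulting integrals yields $\|\partial^\alpha F_2(t)\|_1\le Ct^{-|\alpha|/4}$.

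The singular factors $|\xi|^{-|\beta|}$ near $\xi=0$ are the delicate point. We need $|\xi|^{|\alpha|-m}$ to be square integrable near $0$, i.e.\ $2(m-|\alpha|)<N$, while at the same time $m>N/2$. This is possible only when $|\alpha|\ge1$ and $N\ge2$, which explains the hypotheses in (2). If the integer choice of $m$ fails, we would replace it with a fractional $m$ and use the Sobolev--Slobodeckij norm, or decompose dyadically in $\xi$ and sum. This is the step I expect to be the main obstacle, especially for small $N$ and $|\alpha|$.

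\textbf{Step 2: high frequency.} For $F_1$ we have $h=|\xi|^2-1+\frac1{1+|\xi|^2}$, so
\[
e^{-th}=e^{-t|\xi|^2}\,e^{t}\,e^{-t/(1+|\xi|^2)} .
\]
On $|\xi|\ge1$ we instead bound $h\ge c|\xi|^2$ and $|\partial^\beta h|\le C|\xi|^{2-|\beta|}$. The same Faà di Bruno argument then gives
\[
|\partial^\beta[(1-\Phi)\xi^\alpha e^{-th}]|\le C|\xi|^{|\alpha|-|\beta|}e^{-ct|\xi|^2},
\]
supported in $|\xi|\ge1$, so there is no singularity at the origin. For $t\le1$ we scale $\xi=t^{-1/2}\eta$ and get the bound $Ct^{-|\alpha|/2}$. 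For $t\ge1$ the quantity $e^{-ct|\xi|^2}\le e^{-ct/2}e^{-c|\xi|^2/2}$ decays exponentially, which is even better. This gives (2) for $j=1$.

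\textbf{Step 3: part (1).} For $t\le1$ we bound $\|\partial^\alpha F\|_1\le\|\partial^\alpha F_1\|_1+\|\partial^\alpha F_2\|_1$. The $F_2$ term is bounded for $t\le1$ since its symbol is smooth and compactly supported, which gives $t^{-|\alpha|/2}$; this argument includes the case $\alpha=0$, where Step 2 still works because no singularity occurs. For $t\ge1$ we need $t^{-|\alpha|/4}$ including $\alpha=0$ and $N=1$. Here we avoid the origin problem by scaling $x=t^{1/4}y$: the rescaled symbol $e^{-th(t^{-1/4}\eta)}$ is a smooth function of $\eta$ whose derivatives are bounded by $C(1+|\eta|)^{k}e^{-c\min(|\eta|^4,\,t^{1/2}|\eta|^2)}$, uniformly for $t\ge1$, because
\[
t\,h(t^{-1/4}\eta)=\frac{|\eta|^4}{1+t^{-1/2}|\eta|^2}
\]
is smooth in $\eta$. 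The weighted $L^2$ bound with integer $m>N/2$ then gives uniform integrability in $y$, and hence $\|\partial^\alpha F(t)\|_1\le Ct^{-|\alpha|/4}$. Combining the regimes $t\le1$ and $t\ge1$ gives the stated $\Short{t}$, $\Long{t}$ form.
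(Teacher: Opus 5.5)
There is a genuine gap in your small-time argument, and Step 1 is left unfinished. Your large-time arguments are sound: the exponential decay of $F_1$ for $t\ge1$, and the rescaling $\xi=t^{-1/4}\eta$ in Step 3.

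\textbf{Small time.} In Step 3 you obtain part (1) on $(0,1]$ from $\|\partial_x^\alpha F_1(t)\|_1+\|\partial_x^\alpha F_2(t)\|_1$. For this you rely on Step 2 ``including the case $\alpha=0$''. But the rescaling $\xi=t^{-1/2}\eta$ maps $\{|\xi|\ge1\}$ to $\{|\eta|\ge t^{1/2}\}$, which reaches the origin as $t\to0$. So your bound $|\eta|^{|\alpha|-m}$ must again be square integrable near $\eta=0$, i.e.\ you need $m<N/2+|\alpha|$, the very condition you believed had disappeared.

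For $\alpha=0$ this is not an artifact of a crude estimate. The term where all of $\nabla_\xi^m$ falls on $1-\Phi$, namely $-\nabla^m\Phi\,e^{-th}$ on $1\le|\xi|\le2$, has size $\asymp1$ for every $t\le1$. Meanwhile $\|\widehat{F_1}(t)\|_2\asymp t^{-N/4}$. Your interpolation inequality therefore yields only $\|F_1(t)\|_1\lesssim t^{-\frac N4(1-\frac{N}{2m})}$, which is unbounded as $t\to0$ for every $m>N/2$. Hence $\|F(t)\|_1\le C$ on $(0,1]$ is not proved.

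For $N$ even and $|\alpha|=1$ (e.g.\ $N=4$, which the bilinear estimates use), the smallest admissible integer is $m=N/2+1=N/2+|\alpha|$. With it, your pointwise bound gives a logarithmic loss in Step 2, and a divergent integral at the origin in Step 1.

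A clean repair is the factorization you wrote down and then dropped: $e^{-th(\xi)}=e^{t}e^{-t|\xi|^2}e^{-t/(1+|\xi|^2)}$. Since $K\ge0$, $\|K\|_1=1$ and $\widehat K=(1+|\xi|^2)^{-1}$, the last factor is the Fourier transform of $\delta+\sum_{n\ge1}(-t)^nK^{\ast n}/n!$, a measure of total variation at most $e^t$. Hence $\|\partial_x^\alpha F(t)\|_1\le e^{2t}\|\partial_x^\alpha G(t)\|_1\le Ce^{2t}t^{-|\alpha|/2}$ for all $N$ and $\alpha$. Then $F_1=F-F_2$ settles (2) for $t\le1$.

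The paper avoids the issue differently. In part (1) it never cuts off $F$: by \eqref{eq:h-3-2}, for $|\beta|>|\alpha|$ every term of $\partial_\xi^\beta(\xi^\alpha e^{-th})$ carries a factor $(th)^\ell$ with $\ell\ge1$, which has the correct scaling at high frequency. For $F_j$ it uses a fractional weight $|x|^k$ with $N/2<k<N/2+|\alpha|$, controlled via Hardy--Littlewood--Sobolev by $\|(-\Delta_\xi)^m\mathcal{F}[f]\|_r$ with $r<2$ (Lemma~\ref{lem:L^2derivative}). This is where the hypothesis $|\alpha|\ge1$ in (2) comes from.

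\textbf{Step 1.} The obstruction you flag is self-inflicted. Bounding $(t|\xi|^4)^k e^{-ct|\xi|^4}\le Ce^{-c't|\xi|^4}$ discards the factor $t|\xi|^4$, which is present whenever a derivative hits $e^{-th}$ ($k\ge1$). On $B_1(0)$, where $\Phi\equiv1$, the only terms without this factor are derivatives of $\xi^\alpha$ alone, and those are not singular. If you keep the factor, any integer $m>N/2$ works. Alternatively, apply your Step 3 rescaling to $\Phi(t^{-1/4}\eta)\eta^\alpha e^{-th(t^{-1/4}\eta)}$ for $t\ge1$.

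Your explanation of the hypotheses in (2) is also incorrect. The case $N=1$, $|\alpha|=1$, $m=1$ satisfies both $m>N/2$ and $2(m-|\alpha|)<N$, whereas the case $N=2$, $|\alpha|=1$ admits no such integer $m$.
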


In the rest of this section, 
we prove Lemmas~\ref{lem:L^infty} and \ref{lem:L^1}, 
and 
derive several properties of $F(t)\ast f$
as consequences of Proposition~\ref{prop:semigroup}-(1).

%============================================================================%
\subsection{Estimates associated with $h$}
\label{subsec:h}
%============================================================================%

In this subsection, we introduce some estimates for $h$, 
which are required in the proofs of Lemmas~\ref{lem:L^infty} and \ref{lem:L^1}.
We first show two pointwise estimates associated with $h$.

%===================%
\begin{Lemma}
\label{lem:h-1}
	Let $N\ge1$ and $\alpha \in \mathbb{Z}^N_{+}$.
	Then there exists $C=C(N, \alpha)>0$ such that
	\[
		|
			\partial_{\xi}^\alpha 
			h(\xi)
		|
		\le 
		C
		|\xi|^{-|\alpha|}
		h(\xi)
	\]
	for $\xi\in\mathbb{R}^N$.
\end{Lemma}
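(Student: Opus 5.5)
We prove the bound $|\partial_\xi^\alpha h(\xi)|\le C|\xi|^{-|\alpha|}h(\xi)$ by writing $h(\xi)=\rho^2 g(\rho)$ with $\rho=|\xi|^2$ and $g(\rho)=(1+\rho)^{-1}$, and treating low and high frequencies separately. The case $\alpha=0$ is trivial, so assume $|\alpha|\ge1$. Note that $h(\xi)\sim|\xi|^4$ for $|\xi|\le1$ and $h(\xi)\sim|\xi|^2$ for $|\xi|\ge1$.

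\textbf{Low frequencies, $|\xi|\le 1$.} Here $h$ is a real-analytic function of $\xi$, namely $h(\xi)=|\xi|^4-|\xi|^6+\cdots$, and we need the vanishing order to match. We use homogeneity. Write $h=P\cdot g(|\xi|^2)$ with $P(\xi)=|\xi|^4$. By Leibniz,
\[
\partial_\xi^\alpha h=\sum_{\beta\le\alpha}c_{\alpha\beta}\,\partial_\xi^{\beta}P\;\partial_\xi^{\alpha-\beta}\bigl[g(|\xi|^2)\bigr].
\]
Since $P$ is a homogeneous polynomial of degree $4$, we have $|\partial^\beta P|\le C|\xi|^{4-|\beta|}$ when $|\beta|\le4$, and $\partial^\beta P=0$ when $|\beta|>4$. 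The factor $g(|\xi|^2)$ is smooth on $|\xi|\le 1$ with bounded derivatives. Hence every term is bounded by $C|\xi|^{4-|\beta|}\le C|\xi|^{4-|\alpha|}$ whenever $|\beta|\le4$ and $|\xi|\le1$, using $|\beta|\le|\alpha|$. If $|\alpha|>4$, the surviving terms have $|\beta|\le 4<|\alpha|$, so $|\xi|^{4-|\beta|}\le1\le|\xi|^{4-|\alpha|}$ and the bound still holds. Combined with $h\ge |\xi|^4/2$ on $|\xi|\le1$, this gives the claim.

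\textbf{High frequencies, $|\xi|\ge1$.} Decompose
\[
h(\xi)=|\xi|^2-1+\frac{1}{1+|\xi|^2}.
\]
The first term satisfies $|\partial^\alpha|\xi|^2|\le C|\xi|^{2-|\alpha|}$, since it is a polynomial of degree $2$. The constant $-1$ is killed by any derivative. For the last term, $q(\xi)=(1+|\xi|^2)^{-1}$ is a classical symbol of order $-2$. By induction (each derivative produces a factor $\xi_j/(1+|\xi|^2)$ or lowers the degree of a polynomial numerator) we get $|\partial^\alpha q|\le C(1+|\xi|)^{-2-|\alpha|}\le C|\xi|^{2-|\alpha|}$. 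Since $h\ge|\xi|^2/2$ for $|\xi|\ge1$, the bound follows.

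\textbf{Main obstacle.} The only delicate point is the low-frequency region with $|\alpha|>4$. There a naive analytic expansion might suggest that derivatives are merely bounded, and one has to check that the bound $C\le C|\xi|^{4-|\alpha|}$ is still acceptable. It is, because the right-hand side blows up as $\xi\to0$. The Leibniz-plus-homogeneity argument above handles all cases uniformly. Taking the maximum of the constants from the two regions gives $C=C(N,\alpha)$.
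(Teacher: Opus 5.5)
Your proof is correct, but it is organized differently from the paper's. The paper does not split frequencies. It writes $\partial_\xi^\alpha h(\xi)$ as a sum of terms $C(\beta)\xi^\beta(1+|\xi|^2)^{-|\alpha|-1}$ with $|\beta|=4+2\ell-|\alpha|\ge 0$ and $\ell\le|\alpha|$. A single inequality, $|\xi|^{4+2\ell-|\alpha|}(1+|\xi|^2)^{-|\alpha|-1}\le|\xi|^{-|\alpha|}h(\xi)$, which follows from $|\xi|^{2\ell}\le(1+|\xi|^2)^{|\alpha|}$, then gives the bound for all $\xi$ at once. In that formula the lower limit $|\beta|\ge 4-|\alpha|$ on the numerator degrees carries the vanishing order at the origin, and the denominator power carries the decay at infinity. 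The cost is that the structural formula rests on an induction over $|\alpha|$ that the paper leaves implicit.

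You instead cut at $|\xi|=1$ and use only soft facts in each region. Near the origin you use homogeneity of $|\xi|^4$ and boundedness of the derivatives of $(1+|\xi|^2)^{-1}$ on the unit ball. At infinity you use the exact division $h(\xi)=|\xi|^2-1+(1+|\xi|^2)^{-1}$ together with the standard symbol bound $|\partial_\xi^\alpha(1+|\xi|^2)^{-1}|\le C(1+|\xi|)^{-2-|\alpha|}$. That bound is proved by the same kind of induction, so the bookkeeping is relocated rather than eliminated.

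Your version buys transparency and robustness. The two regimes $h\sim|\xi|^4$ and $h\sim|\xi|^2$ appear explicitly. The argument would also carry over unchanged to any symbol that is $|\xi|^4$ times a smooth factor near the origin and, at infinity, a classical symbol of order $2$ bounded below by $c|\xi|^2$. The paper's version is shorter and uniform, but tied to the specific rational form of $h$.
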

\begin{proof}
	We observe from the Leibniz rule that
	\[
		\partial_{\xi}^\alpha 
		h(\xi)
		=
		\sum_{
				\ell
				=
				\max
				\{ 0, \frac{ |\alpha|-4 }{2} \}
			}^{|\alpha|}
			\sum_{
					|\beta|
					=
					4
					+
					2
					\ell
					-
					|\alpha|
				}
				\dfrac{
						C(\beta)\xi^\beta
					}{
						(1+|\xi|^2)^{|\alpha|+1}
					}
	\]
	for $\xi\in\mathbb{R}^N$.
	This implies that
	\[
	\begin{split}
		|
			\partial_{\xi}^\alpha 
			h(\xi)
		|
		\le \, &
		C
		\sum_{
				\ell
				=
				\max
				\{
					0, \frac{|\alpha|-4}{2}
				\}
			}^{|\alpha|}
		\dfrac{
				|\xi|^{4+2\ell-|\alpha|}
			}{
				(
					1
					+
					|\xi|^2
				)^{|\alpha|+1}
			}\\
		\le \, & 
		C
		|\xi|^{-|\alpha|}
		\dfrac{
				|\xi|^{4}
			}{
				1
				+|
				\xi|^2
			}\\
			= \, &
			C
			|\xi|^{-|\alpha|}
			h(\xi)
	\end{split}
	\]
	for $\xi\in\mathbb{R}^N$.
	This is the desired estimate of Lemma~\ref{lem:h-1}.
	Thus, we complete the proof of Lemma~\ref{lem:h-1}.
\end{proof}
%===================%

%===================%
\begin{Lemma}
\label{lem:h-2}
	Let $N\ge1$.
	Then it holds that
	\[
			h(\xi)
			\ge
			\left\{ 
				\begin{split}
					&\dfrac{1}{4}
					|\xi|^2
					+
					\dfrac{1}{4}
					&& \text{if} \quad |\xi|\ge1,\\
					&\dfrac{1}{5}
					|\xi|^4
					&& \text{if} \quad |\xi|\le 2.
				\end{split}
			\right.
	\]
\end{Lemma}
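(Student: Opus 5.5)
We argue directly in the variable $s \coloneqq |\xi|^2 \ge 0$, since $h(\xi) = s^2/(1+s)$ depends only on $|\xi|$. Each of the two inequalities becomes an elementary polynomial inequality after multiplying through by the positive quantity $1+s$. No earlier result is needed.

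For the high-frequency bound, let $|\xi|\ge 1$, so $s\ge 1$. The claim $\frac{s^2}{1+s}\ge \frac{1}{4}(s+1)$ is equivalent to $4s^2 \ge (1+s)^2$, that is, $2s\ge 1+s$. This is exactly $s\ge1$, so the bound holds, with equality only at $s=1$.

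For the low-frequency bound, let $|\xi|\le 2$, so $0\le s\le 4$. The claim $\frac{s^2}{1+s}\ge\frac{1}{5}s^2$ is trivial when $s=0$. When $s>0$ it is equivalent to $5\ge 1+s$, that is, $s\le 4$, which is exactly our assumption.

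The only point requiring attention is to confirm that the constants $\frac14$ and $\frac15$ are compatible with the chosen radii $1$ and $2$. In both cases the threshold value of $s$ is exactly the endpoint of the allowed range, so there is no real obstacle; the proof is a direct verification.
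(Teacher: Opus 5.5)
Your proof is correct and essentially matches the paper's: both are direct elementary checks, and your low-frequency step ($1+|\xi|^2\le 5$) is the same as the paper's. For $|\xi|\ge 1$ the paper writes $h(\xi)=\frac14|\xi|^2+\frac14\cdot\frac{|\xi|^2}{1+|\xi|^2}\,(3|\xi|^2-1)$ and bounds the second term below by $\frac14$, whereas you reduce the claim to $4s^2\ge(1+s)^2$; this is an equally valid way to get the same bound.
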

\begin{proof}
	If $|\xi| \ge 1$, then we have
	\[
		h(\xi)
		=
		\dfrac{1}{4}
		|\xi|^2
		+
		\dfrac{1}{4}
		\times 
		\dfrac{
					|\xi|^2
				}{
					1+|\xi|^2
				}
		\times 
		(
			3
			|\xi|^2
			-
			1
		)
		\ge 
		\dfrac{1}{4}
		|\xi|^2
		+
		\dfrac{1}{4}
		\times 
		\dfrac{1}{2}
		\times 
		(3-1)
		=
		\dfrac{1}{4}
		|\xi|^2
		+
		\dfrac{1}{4}.
	\]
	On the other hand, 
	if $|\xi|\le 2$, then we have
	\[
		h(\xi)
		=
		\dfrac{
				|\xi|^4
				}{
					1+|\xi|^2
				}
		\ge
		\dfrac{
					|\xi|^4
				}{
					1+2^2
				}
		= 
		\dfrac{1}{5}
		|\xi|^4.
	\]
	Therefore, 
	the proof of Lemma~\ref{lem:h-2} is complete.
\end{proof}
%===================%

We next give pointwise estimates for derivatives of the Fourier transforms of $F_j$ and $F$.

%===================%
\begin{Lemma}
\label{lem:h-3}
	Let $N\ge1$ and $\alpha$, 
	$\beta\in\mathbb{Z}^N_{+}$ with $|\alpha|<|\beta|$.
	Then there exists $C=C(N, \alpha, \beta)>0$ such that 
	\begin{alignat}{1}
	\label{eq:h-3-1}
		|
			\partial_\xi^{\beta}
			\left(
				\xi^\alpha
				\Phi_{j}(\xi)
				\mathrm{e}^{-th(\xi)}
			\right)
		|
		\le \, &
		C 
		|\xi|^{|\alpha|-|\beta|}
		\mathrm{e}^{-th(\xi)}
		\chi_j(\xi)
		\sum_{\ell=0}^{|\beta|}
			t^\ell 
			h(\xi)^\ell,\\
		\label{eq:h-3-2}
		|
			\partial_\xi^{\beta}
			\left(
				\xi^\alpha 
				\mathrm{e}^{-th(\xi)}
			\right)
		|
		\le \, & 
		C 
		|\xi|^{|\alpha|-|\beta|}
		\mathrm{e}^{-th(\xi)}
		\sum_{\ell=1}^{|\beta|}
			t^\ell 
			h(\xi)^\ell,
	\end{alignat}
	for $\xi\in\mathbb{R}^N$, $t>0$, and $j\in\{1, 2\}$, 
	where 
	$
		\chi_1(\xi)
		\coloneqq 
		\chi_{\mathbb{R}^N\setminus B_1(0)}(\xi)
	$ 
	and 
	$
		\chi_2(\xi)
		\coloneqq 
		\chi_{B_2(0)}(\xi)
	$.
\end{Lemma}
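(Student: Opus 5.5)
We prove both estimates by combining the Leibniz rule with Fa\`a di Bruno's formula for the exponential, using Lemma~\ref{lem:h-1} as the only input on $h$. The plan has three steps.

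\textbf{Step 1: derivatives of the exponential.} For any multi-index $\gamma$ with $|\gamma|\ge1$, Fa\`a di Bruno's formula writes $\partial_\xi^\gamma \mathrm{e}^{-th(\xi)}$ as $\mathrm{e}^{-th(\xi)}$ times a finite sum of terms
\[
(-t)^k\,\partial_\xi^{\gamma_1}h(\xi)\cdots\partial_\xi^{\gamma_k}h(\xi),
\qquad
\gamma_1+\cdots+\gamma_k=\gamma,\quad |\gamma_i|\ge1,\quad 1\le k\le|\gamma|.
\]
By Lemma~\ref{lem:h-1}, each such term is bounded by $C t^k|\xi|^{-|\gamma|}h(\xi)^k$. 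Hence
\[
|\partial_\xi^\gamma \mathrm{e}^{-th(\xi)}|
\le C|\xi|^{-|\gamma|}\mathrm{e}^{-th(\xi)}\sum_{\ell=1}^{|\gamma|}t^\ell h(\xi)^\ell
\quad\text{for } |\gamma|\ge1,
\]
while for $\gamma=0$ the factor is simply $\mathrm{e}^{-th(\xi)}$.

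\textbf{Step 2: proof of \eqref{eq:h-3-2}.} Apply the Leibniz rule to $\partial_\xi^\beta(\xi^\alpha\mathrm{e}^{-th})$. The pieces are $\partial^{\delta}\xi^\alpha\,\partial^{\beta-\delta}\mathrm{e}^{-th}$. The monomial factor vanishes unless $\delta\le\alpha$, and then it is bounded by $C|\xi|^{|\alpha|-|\delta|}$.

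Since $|\alpha|<|\beta|$, any $\delta\le\alpha$ satisfies $|\delta|\le|\alpha|<|\beta|$, so $|\beta-\delta|\ge1$. Step 1 therefore applies to every surviving term and gives
\[
C|\xi|^{|\alpha|-|\delta|}\,|\xi|^{-|\beta|+|\delta|}\,\mathrm{e}^{-th}\sum_{\ell=1}^{|\beta-\delta|}t^\ell h^\ell,
\]
which is bounded by the right-hand side of \eqref{eq:h-3-2}. This is where the hypothesis $|\alpha|<|\beta|$ is used: it is exactly what forces the sum to start at $\ell=1$.

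\textbf{Step 3: proof of \eqref{eq:h-3-1}.} Apply the Leibniz rule once more, now with the extra factor $\Phi_j$. When no derivative falls on $\Phi_j$, we use $0\le\Phi_j\le\chi_j$ together with Step 2, after enlarging the sum to start at $\ell=0$.

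When $\partial^\eta\Phi_j$ is hit with $|\eta|\ge1$, its support lies in the annulus $1\le|\xi|\le2$. There $\chi_1=\chi_2=1$ and $|\partial^\eta\Phi_j|\le C\le C'|\xi|^{-|\eta|}$. The remaining factor $\partial^{\beta-\eta}(\xi^\alpha\mathrm{e}^{-th})$ is estimated by the same Leibniz expansion as in Step 2. Now $\beta-\eta$ may have length at most $|\alpha|$, so the $\ell=0$ term can appear; this is harmless because \eqref{eq:h-3-1} includes $\ell=0$. This gives the bound
\[
C|\xi|^{|\alpha|-|\beta|+|\eta|}\,\mathrm{e}^{-th}\sum_{\ell=0}^{|\beta|}t^\ell h^\ell,
\]
and the extra power $|\xi|^{|\eta|}$ is absorbed by the factor $|\xi|^{-|\eta|}$ coming from $\partial^\eta\Phi_j$.

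Outside the annulus, $\Phi_j$ is either identically $0$ or identically $1$. So the indicator $\chi_j$ correctly records the support of every term: $\operatorname{supp}\Phi_1\subset\{|\xi|\ge1\}$ and $\operatorname{supp}\Phi_2\subset B_2(0)$.

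\textbf{Expected obstacle.} The only delicate point is tracking which index ranges require $\ell\ge1$ in \eqref{eq:h-3-2} versus $\ell\ge0$ in \eqref{eq:h-3-1}, i.e., verifying the lower index of the sum in each case. The rest is routine bookkeeping of powers of $|\xi|$.
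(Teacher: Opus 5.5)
Your proposal is correct and follows essentially the same route as the paper: the Leibniz rule combined with the Fa\`a di Bruno expansion of $\partial_\xi^\gamma \mathrm{e}^{-th(\xi)}$, each term controlled by Lemma~\ref{lem:h-1}, with $|\alpha|<|\beta|$ forcing at least one derivative onto the exponential in \eqref{eq:h-3-2}. The only difference is the order: you prove \eqref{eq:h-3-2} first and then get \eqref{eq:h-3-1} by splitting off $\Phi_j$ (using that $\nabla\Phi_j$ is supported in $1\le|\xi|\le2$), whereas the paper expands the three-factor product $\xi^\alpha\,\Phi_j\,\mathrm{e}^{-th}$ directly.
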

\begin{proof}
	Applying the Leibniz rule, we obtain
	\begin{equation}
	\label{eq:lemh-3-1}
		\partial_\xi^{\beta}
		\left(
			\xi^\alpha
			\Phi_{j}(\xi)
			\mathrm{e}^{-th(\xi)}
		\right)
		=
		\sum_{
			\beta_1+\beta_2+\beta_3
			=
			\beta
			}
		C(\beta_1, \beta_2, \beta_3)
		\partial^{\beta_{1}}_{\xi}
		(
			\xi^\alpha
		)
		\times 
		\partial^{\beta_{2}}_{\xi}
		(
			\Phi_j(\xi)
		)
		\times 
		\partial^{\beta_{3}}_{\xi}
		(
			\mathrm{e}^{-th(\xi)}
		)
	\end{equation}
	for $j \in \{ 1, 2 \}$ and $\xi \in \mathbb{R}^N$.
	A direct calculation gives
	\begin{equation}
	\label{eq:lemh-3-2}
	\begin{alignedat}{2}
		|
			\partial^{\beta_{1}}_{\xi}(\xi^\alpha)
		|
		\le \, & 
		C
		|\xi|^{|\alpha|-|\beta_1|}
		&& \qquad \text{if} \quad \alpha \ge \beta_1,\\
		|
			\partial^{\beta_{2}}_{\xi}
			(
				\Phi_{j}(\xi)
			)
		|
		\le \, & 
		C
		|\xi|^{-|\beta_2|}
		\chi_{j}(\xi)
		&&\qquad \text{if} \quad |\beta_2| \neq0,
	\end{alignedat}
	\end{equation}
	for $j\in\{1,2\}$ and $\xi\in\mathbb{R}^N$.
	On the other hand, if $|\beta_3|\neq0$, since 
	\[
		\partial^{\beta_{3}}_{\xi}
		(
			\mathrm{e}^{-th(\xi)}
		)
		=
		\mathrm{e}^{-th(\xi)}
		\sum^{|\beta_3|}_{\ell=1}
			(-t)^{\ell}
			\sum_{
				\substack{
					\gamma_1,\cdots,\gamma_{\ell}\in \mathbb{N}^\ell \\ 
					\gamma_1+\cdots+\gamma_\ell=\beta_3
					}
				} 
				C(\gamma_1, \cdots, \gamma_{\ell})
				\partial_\xi^{\gamma_1}
				h(\xi) 
				\times 
				\cdots 
				\times 
				\partial_\xi^{\gamma_\ell}
				h(\xi)
	\]
	for $\xi \in \mathbb{R}^N$, 
	we observe from Lemma~\ref{lem:h-1} that
	\[
	\begin{split}
		|
			\partial^{\beta_{3}}_{\xi}
			(
				\mathrm{e}^{-th(\xi)}
			)
		|
		\le \, & 
		C
		\mathrm{e}^{-th(\xi)}
		\sum^{|\beta_3|}_{\ell=1}
			t^{\ell}
			\sum_{
				\substack{
					\gamma_1,\cdots, \gamma_{\ell} \in \mathbb{N}^\ell \\ 
					\gamma_1+\cdots+\gamma_\ell=\beta_3
					}
				} 
				|\xi|^{-(|\gamma_1|+\cdots+|\gamma_\ell|)}
				h(\xi)^\ell\\
		= \, & 
		C
		|\xi|^{-|\beta_3|}
		\mathrm{e}^{-th(\xi)}
		\sum^{|\beta_3|}_{\ell=1}
			t^\ell 
			h(\xi)^\ell
	\end{split}
	\]
	for $\xi\in\mathbb{R}^N$.
	This, together with \eqref{eq:lemh-3-1} and \eqref{eq:lemh-3-2}, implies that
	\begin{alignat*}{1}
		|
			\partial_\xi^{\beta}
			\left(
				\xi^\alpha
				\Phi_j(\xi)
				\mathrm{e}^{-th(\xi)}
			\right)
		|
		\le \, &  
		C
		\mathrm{e}^{-th(\xi)}
		\sum_{
			\substack{
				\beta_1+\beta_2=\beta\\
				\beta_1 \le \alpha, \,
				|\beta_2| \neq 0
				}
			}
			|
				\partial^{\beta_{1}}_{\xi}
				(
					\xi^\alpha
				)
			|
			\times
			|
				\partial^{\beta_{2}}_{\xi}
				(
					\Phi_j(\xi)
				)
			|\\
		& \, 
		+
		C
		\Phi_j(\xi)
		\sum_{
			\substack{
				\beta_1+\beta_3=\beta\\ 
				|\beta_3|\ne 0
				}
			}
			|
				\partial^{\beta_{1}}_{\xi}
				(
					\xi^\alpha
				)
			|
			\times
			|
				\partial^{\beta_{3}}_{\xi}
				(
					\mathrm{e}^{-th(\xi)}
				)
			|\\
		& \, +
		C
		\sum_{
			\substack{
				\beta_1+\beta_2+\beta_3=\beta\\
				\beta_1\le \alpha, \,
				|\beta_2| \neq 0, \,
				|\beta_3|\neq 0
				}
			}
			|
				\partial^{\beta_{1}}_{\xi}
				(
					\xi^\alpha
				)
			|
			\times
			|
				\partial^{\beta_{2}}_{\xi}(\Phi_{j}(\xi))
			|
			\times
			|\partial^{\beta_{3}}_{\xi}(\mathrm{e}^{-th(\xi)})|\\
		\le \, & C|\xi|^{|\alpha|-|\beta|}\mathrm{e}^{-th(\xi)}\chi_j(\xi)\sum^{|\beta|}_{\ell=0}t^\ell h(\xi)^\ell
	\end{alignat*}
	for $j\in\{1,2\}$ and $\xi\in\mathbb{R}^N$.
	Thus, \eqref{eq:h-3-1} follows.
	On the other hand, since 
	\[
		\partial_\xi^{\beta}
		\left(
			\xi^\alpha \mathrm{e}^{-th(\xi)}
		\right)
		=
		\sum_{\beta_1+\beta_2=\beta}
		C(\beta_1, \beta_2)
		\partial^{\beta_{1}}_{\xi}
		(
			\xi^\alpha
		)
		\times 
		\partial^{\beta_{2}}_{\xi}
		(
			\mathrm{e}^{-th(\xi)}
		)
	\]
	for $\xi\in\mathbb{R}^N$, 
	we apply the same estimates as in the case of \eqref{eq:h-3-1} to obtain
	\begin{alignat*}{1}
		|
			\partial_\xi^{\beta}
			\left(
				\xi^\alpha 
				\mathrm{e}^{-th(\xi)}
			\right)
		|
		\le \, & 
		C
		\sum_{
			\substack{
				\beta_1+\beta_2=\beta\\ 
				\beta_1\le \alpha, \, |\beta_2|\neq0
				}
				}
			|
				\partial^{\beta_{1}}_{\xi}
				(
					\xi^\alpha
				)
			|
			\times
			|
				\partial^{\beta_{2}}_{\xi}
				(
					\mathrm{e}^{-th(\xi)}
				)
			|\\
		\le \, &  
		C
		|\xi|^{|\alpha|-|\beta|}
		\mathrm{e}^{-th(\xi)}
		\sum^{|\beta|}_{\ell=1}
			t^\ell 
			h(\xi)^\ell
	\end{alignat*}
	for $\xi \in \mathbb{R}^N$.
	Thus \eqref{eq:h-3-2} follows.
	Therefore, the proof of Lemma~\ref{lem:h-3} is complete.
\end{proof}
%===================%

We finally give an estimate for integrals involving $h$.

%===================%
\begin{Lemma}
\label{lem:h-4}
	Let $N\ge1$, $s\in\mathbb{R}$, $\sigma\ge 0$, and $r\in[1,\infty)$ with $N+(s+2\sigma)r>0$.
	Then there exists $C=C(N, s, \sigma, r)>0$ such that
	\[
	\begin{split}
		 \|
		 	|\cdot|^s
			h(\cdot)^\sigma
			\mathrm{e}^{-th(\cdot)}
			\chi_{j}(\cdot)
		\|_r
		\le \, & 
		C
		t^{-\frac{N}{2jr}-\frac{s}{2j}-\sigma},\\
		\|
			|\cdot|^s
			h(\cdot)^\sigma
			\mathrm{e}^{-th(\cdot)}
		\|_r
		\le \, & 
		C
		t^{-\sigma}
		\Short{t}^{-\frac{N}{2r}-\frac{s}{2}}
		\Long{t}^{-\frac{N}{4r}-\frac{s}{4}},
	\end{split}
	\]
	for $j \in \{ 1, 2 \}$ and $t>0$.
\end{Lemma}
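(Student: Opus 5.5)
The approach is to compute the $L^r$ norms directly: pass to polar coordinates, use the two-sided comparison $h(\xi)\simeq |\xi|^4$ for $|\xi|\le 2$ and $h(\xi)\simeq |\xi|^2$ for $|\xi|\ge 1$, and then rescale. Lemma~\ref{lem:h-2} gives the lower bounds. The upper bounds are elementary: $h(\xi)\le |\xi|^4$ always, and $h(\xi)\le |\xi|^2$ always. Writing $\rho=|\xi|$, the $r$-th power of each norm is, up to a constant, an integral of the form $\int \rho^{sr+N-1} h^{\sigma r} e^{-rth}\,\mathrm{d}\rho$.

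\emph{Low frequencies ($j=2$, $\rho\le2$).} Here $h^{\sigma r}\le \rho^{4\sigma r}$ and $e^{-rth}\le e^{-rt\rho^4/5}$. The integrand is therefore bounded by $\rho^{(s+4\sigma)r+N-1}e^{-rt\rho^4/5}$. Integrability at $0$ needs $N+(s+4\sigma)r>0$, which follows from $N+(s+2\sigma)r>0$ and $\sigma\ge0$. Substituting $\rho=t^{-1/4}\eta$ and extending the integral to $(0,\infty)$ gives the bound $C t^{-\frac{N+(s+4\sigma)r}{4}}$. Taking the $r$-th root yields $t^{-\frac{N}{4r}-\frac{s}{4}-\sigma}$, which is the claim for $j=2$.

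\emph{High frequencies ($j=1$, $\rho\ge1$).} Here $h^{\sigma r}\le\rho^{2\sigma r}$ and $e^{-rth}\le e^{-rt(\rho^2+1)/4}$. Integrability at $\infty$ is automatic because of the Gaussian factor, but the scaling at $0$ needs $N+(s+2\sigma)r>0$. Indeed, if we extend the integral $\int_1^\infty \rho^{(s+2\sigma)r+N-1}e^{-rt\rho^2/4}\,\mathrm{d}\rho$ to $(0,\infty)$, it is finite exactly under the stated hypothesis. The substitution $\rho=t^{-1/2}\eta$ then gives $C t^{-\frac{N+(s+2\sigma)r}{2}}$, i.e.\ the claimed $t^{-\frac{N}{2r}-\frac{s}{2}-\sigma}$.

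\emph{The full estimate.} Use $\chi_1+\chi_2\ge1$ and the triangle inequality to bound the norm by the two pieces above. For $t\le1$ the dominant term is the $j=1$ bound $t^{-\frac{N}{2r}-\frac s2-\sigma}$. The $j=2$ piece is at most $C t^{-\sigma}$ for $t\le 1$: bound $\rho^{s}h^{\sigma}$ by $\rho^{s+4\sigma}$ on $B_2$ and use $e^{-th}\le1$, integrable by the hypothesis; then convert back to $t^{-\sigma}$ using $(th)^\sigma e^{-th}\le C$. That is, I would rather write $h^\sigma e^{-th}=t^{-\sigma}(th)^\sigma e^{-th}$ and keep part of the exponential, e.g.\ $(th)^\sigma e^{-th}\le C e^{-th/2}$. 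This reduces everything to the case $\sigma=0$ with the factor $t^{-\sigma}$ pulled out, though the integrability condition must then be handled with care. Since $\Short t\sim t$ and $\Long t\sim1$ for small $t$, the dominant term matches the claimed rate $t^{-\sigma}\Short{t}^{-\frac N{2r}-\frac s2}$. For $t\ge1$ the $j=2$ term $t^{-\frac N{4r}-\frac s4-\sigma}$ dominates, because the $j=1$ piece decays exponentially thanks to $e^{-rt/4}$. This matches $t^{-\sigma}\Long t^{-\frac N{4r}-\frac s4}$, since there $\Short t\sim1$ and $\Long t\sim t$.

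\emph{Main obstacle.} The delicate point is the small-$t$ contribution of the low frequencies when $s$ is very negative and $\sigma>0$. There the only integrability comes from $h^\sigma\sim\rho^{4\sigma}$ near $0$, so one cannot simply factor out $t^{-\sigma}$ using $(th)^\sigma e^{-th}\le C$, which would discard $h^\sigma$. Instead, for $t\le1$ I would bound the low-frequency piece directly by $\|\rho^{s+4\sigma}\chi_{B_2}\|_r\le C$. This constant is finite by the hypothesis and is $\le C t^{-\sigma}\Short{t}^{-\frac{N}{2r}-\frac s2}$ provided the exponent $\frac N{2r}+\frac s2+\sigma\ge0$. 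That inequality holds because it is equivalent to $N+(s+2\sigma)r\ge0$.
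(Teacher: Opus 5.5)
Your proof is correct and follows the paper's argument. The ingredients are the same: the pointwise comparisons from Lemma~\ref{lem:h-2} together with $h\le|\xi|^2$ and $h\le|\xi|^4$, rescaling of each frequency piece, the triangle inequality, and the factor $\mathrm{e}^{-t/4}$ to absorb the high-frequency piece for large $t$. For small $t$ you use the trivial bound $\|\,|\cdot|^{s+4\sigma}\chi_{B_2(0)}\|_r$ on the low-frequency piece (the paper's $\min\{1, t^{-\frac{N}{4r}-\frac{s}{4}-\sigma}\}$), which is where $N+(s+2\sigma)r>0$ is used a second time.

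Your intermediate idea of pulling out $t^{-\sigma}$ via $(th)^\sigma\mathrm{e}^{-th}\le C$ is unnecessary. As you note yourself, it fails when $N+sr\le 0$, so it should simply be dropped from the write-up.
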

\begin{proof}
	By Lemma~\ref{lem:h-2} and 
	\[
	h(\xi)\le
	\left\{
	\begin{alignedat}{2}
		&|\xi|^2 
		&&\quad \text{if}\quad {|\xi|\ge 1}, \\
		&|\xi|^4 
		&&\quad\text{if}\quad {|\xi|<2},
	\end{alignedat}
	\right.
	\]
	we have
	\begin{alignat*}{1}
		\|
			|\cdot|^{s}
			h(\cdot)^\sigma
			\mathrm{e}^{-th(\cdot)}
			\chi_{1}(\cdot)
		\|_r
		= \, &
		\left(
			\int_{\mathbb{R}^N\setminus B_1(0)}
				\left(
					|\xi|^s
					h(\xi)^\sigma
					\mathrm{e}^{-th(\xi)}
				\right)^r
			\, \mathrm{d}\xi
		\right)^{\frac1r}\\
		\le \, &
		\mathrm{e}^{-\frac{1}{4}t}
		\left(
			\int_{\mathbb{R}^N}
				\left(
					|\xi|^{s+2\sigma}
					\mathrm{e}^{-\frac{t}{4}|\xi|^2}
				\right)^r
			\, \mathrm{d}\xi
		\right)^{\frac1r}\\
		= \, &
		t^{-\frac{N}{2r}-\frac{s}{2}-\sigma}
		\mathrm{e}^{-\frac{1}{4}t}
		\left(
			\int_{\mathbb{R}^N}
				\left(
					|\xi|^{s+2\sigma}
					\mathrm{e}^{-\frac{1}{4}|\xi|^2}
				\right)^r
			\, \mathrm{d}\xi
		\right)^{\frac1r}\\
		\le \, &
		C
		t^{-\frac{N}{2r}-\frac{s}{2}-\sigma}
		\mathrm{e}^{-\frac14 t},\\
		\|
			|\cdot|^{s}
			h(\cdot)^\sigma
			\mathrm{e}^{-th(\cdot)}
			\chi_{2}(\cdot)
		\|_r
		= \, & 
		\left(
			\int_{B_2(0)}
				\left(
					|\xi|^{s}
					h(\xi)^{\sigma}
					\mathrm{e}^{-th(\xi)}
				\right)^r
			\, \mathrm{d}\xi
		\right)^{\frac1r}\\
		\le \, & 
		\min
		\left\{
			\int_{B_2(0)}
				|\xi|^{(s+4\sigma)r}
			\, \mathrm{d}\xi, 
			\int_{\mathbb{R}^N}
				\left(
					|\xi|^{s+4\sigma}
					\mathrm{e}^{-\frac{t}{5}|\xi|^4}
				\right)^r
			\, \mathrm{d}\xi
		\right\}^{\frac1r}\\
		\le \, & 
		C
		\min
		\left\{
			1, 
			t^{-\frac{N}{4}-\frac{(s+4\sigma)r}{4}}
			\int_{\mathbb{R}^N}
				\left(
					|\xi|^{s+4\sigma}
					\mathrm{e}^{-\frac{1}{5}|\xi|^4}
				\right)^r
			\, \mathrm{d}\xi
		\right\}^{\frac1r}\\
		\le \, & 
		C
		\min\{
			1, 
			t^{-\frac{N}{4r}-\frac{s}{4}-\sigma}
		\},
	\end{alignat*}
	for $t > 0$.
	In particular, 
	we obtain the desired estimate of 
	$
		\|
			|\cdot|^{s}
			h(\cdot)^\sigma
			\mathrm{e}^{-th(\cdot)}
			\chi_{j}(\cdot)
		\|_r
	$.
	Moreover, 
	since
	\begin{alignat*}{1}
		t^{-\frac{N}{2r}-\frac{s}{2}-\sigma}
		\mathrm{e}^{-\frac{1}{4}t}
		= \, &
		t^{-\sigma}
		\Short{t}^{-\frac{N}{2r}-\frac{s}{2}}
		\Long{t}^{-\frac{N}{4r}-\frac{s}{4}}
		\times 
		\Long{t}^{-\frac{N}{4r}-\frac{s}{4}}
		\mathrm{e}^{-\frac{1}{4}t}\\
		\le \, &
		C
		t^{-\sigma}
		\Short{t}^{-\frac{N}{2r}-\frac{s}{2}}
		\Long{t}^{-\frac{N}{4r}-\frac{s}{4}},\\
		\min
		\{
			1, 
			t^{-\frac{N}{4r}-\frac{s}{4}-\sigma}
		\}
		= \, &
		t^{-\sigma}
		\Short{t}^{-\frac{N}{2r}-\frac{s}{2}}
		\Long{t}^{-\frac{N}{4r}-\frac{s}{4}}
		\times 
		\min
		\{
			t^{\frac{N+(s+2\sigma)r}{2r}}
			\Long{t}^{-\frac{N}{4r}-\frac{s}{4}}, 
			\Short{t}^{\frac{N}{4r}+\frac{s}{4}}
		\}\\
		\le \, & 
		C
		t^{-\sigma}
		\Short{t}^{-\frac{N}{2r}-\frac{s}{2}}
		\Long{t}^{-\frac{N}{4r}-\frac{s}{4}},
	\end{alignat*}
	for $t > 0$, 
	we obtain
	\begin{alignat*}{1}
		\|
			|\cdot|^{s}
			h(\cdot)^\sigma
			\mathrm{e}^{-th(\cdot)}
		\|_r
		\le \, & 
		\|
			|\cdot|^{s}
			h(\cdot)^\sigma
			\mathrm{e}^{-th(\cdot)}
			\chi_{1}(\cdot)
		\|_r
		+
		\|
			|\cdot|^{s}
			h(\cdot)^\sigma
			\mathrm{e}^{-th(\cdot)}
			\chi_{2}(\cdot)
		\|_r\\
		\le \, &
		C
		t^{-\frac{N}{2r}-\frac{s}{2}-\sigma}
		\mathrm{e}^{-\frac{1}{4}t}
		+
		C
		\min
		\{
			1, 
			t^{-\frac{N}{4r}-\frac{s}{4}-\sigma}
		\}\\
		\le \, & 
		C
		t^{-\sigma}
		\Short{t}^{-\frac{N}{2r}-\frac{s}{2}}
		\Long{t}^{-\frac{N}{4r}-\frac{s}{4}}
	\end{alignat*}
	for $t > 0$.
	This is the desired estimate of 
	$
		\|
			|\cdot|^{s}
			h(\cdot)^\sigma
			\mathrm{e}^{-th(\cdot)}
		\|_r
	$, 
	and thus we complete the proof of Lemma~\ref{lem:h-4}.
\end{proof}
%===================%

%============================================================================%
\subsection{Proofs of Lemmas~\ref{lem:L^infty} and \ref{lem:L^1}}
\label{subsec:lem-1}
%============================================================================%

We first give the proof of Lemma~\ref{lem:L^infty}.
\begin{proof}[Proof of Lemma~\ref{lem:L^infty}]
	By the definition of $F_j$ and Lemma~\ref{lem:h-4}, we have
	\begin{alignat*}{1}
		|
			\partial_x^\alpha F(x, t)
		|
		\le \, &
		C
		\|
			|\cdot|^{|\alpha|}
			\mathrm{e}^{-th(\cdot)}
		\|_{1}
		\le 
		C
		\Short{t}^{-\frac{N}{2}-\frac{|\alpha|}{2}}
		\Long{t}^{-\frac{N}{4}-\frac{|\alpha|}{4}},\\
		|
			\partial_x^\alpha F_{j}(x, t)
		|
		\le \, & 
		C
		\|
			|\cdot|^{|\alpha|}
			\mathrm{e}^{-th(\cdot)}
			\chi_j(\cdot)
		\|_1
		\le 
		C 
		t^{-\frac{N}{2j}-\frac{|\alpha|}{2j}},
	\end{alignat*}
	for $j \in \{ 1, 2 \}$, $x \in \mathbb{R}^N$, 
	and $t>0$.
	Therefore, 
	the desired estimate of Lemma~\ref{lem:L^infty} follows.
\end{proof}

The proof of Lemma~\ref{lem:L^1} is based on the following three lemmas:
\begin{Lemma}[{\cite[Lemma~4.5]{CKKW2021}}]
\label{lem:CKKW}
	Let $N \ge 1$ and $k > 0$ satisfy $k > N/2$.
	Then there exists
	$
		C
		=
		C(N, k)
		>
		0
	$ 
	such that
	\[
		\|
			f
		\|_{1}
		\le 
		C
		\|
			f
		\|^{1-\frac{N}{2k}}_2
		\|
			|\cdot|^kf
		\|^{\frac{N}{2k}}_2
	\]
	for $f\in \mathcal{S}$.
\end{Lemma}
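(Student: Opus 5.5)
The inequality in Lemma~\ref{lem:CKKW} is a Carlson-type estimate, and I would prove it by splitting $\mathbb{R}^N$ into a ball and its complement, then optimizing the radius. Fix $f\in\mathcal{S}$ and $R>0$, and write
\[
	\|f\|_1
	=
	\int_{B_R(0)}|f(x)|\,\mathrm{d}x
	+
	\int_{\mathbb{R}^N\setminus B_R(0)}|x|^{-k}\,|x|^k|f(x)|\,\mathrm{d}x .
\]
I will bound each piece by the Cauchy--Schwarz inequality. For the inner piece this gives $\int_{B_R(0)}|f|\le |B_R(0)|^{1/2}\|f\|_2=C R^{N/2}\|f\|_2$. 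For the outer piece it gives
\[
	\int_{\mathbb{R}^N\setminus B_R(0)}|x|^{-k}\,|x|^k|f(x)|\,\mathrm{d}x
	\le
	\Bigl(\int_{|x|\ge R}|x|^{-2k}\,\mathrm{d}x\Bigr)^{\frac12}
	\||\cdot|^kf\|_2 .
\]
The integral on the right is finite precisely because $2k>N$, and it equals $C(N,k)R^{N-2k}$. The outer piece is therefore at most $CR^{N/2-k}\||\cdot|^kf\|_2$. This is the only place the hypothesis $k>N/2$ enters, and it is exactly what that hypothesis is for.

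Combining the two bounds, I obtain
\[
	\|f\|_1\le C\bigl(R^{\frac N2}\|f\|_2+R^{\frac N2-k}\||\cdot|^kf\|_2\bigr)
	\qquad\text{for all } R>0 .
\]
If $f\equiv0$ there is nothing to prove. Otherwise both norms are positive and finite, since $f\in\mathcal{S}$, so I may choose
\[
	R=\bigl(\||\cdot|^kf\|_2/\|f\|_2\bigr)^{\frac1k}.
\]
This choice balances the two terms. Each term then becomes
\[
	\|f\|_2\Bigl(\frac{\||\cdot|^kf\|_2}{\|f\|_2}\Bigr)^{\frac{N}{2k}}
	=
	\|f\|_2^{1-\frac{N}{2k}}\||\cdot|^kf\|_2^{\frac{N}{2k}},
\]
which is the claimed bound with $C=C(N,k)$.

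I do not expect a real obstacle here. The one point needing care is the convergence of the tail integral $\int_{|x|\ge R}|x|^{-2k}\,\mathrm{d}x$, and that is guaranteed by $k>N/2$.
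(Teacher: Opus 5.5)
Your proof is correct. The paper gives no proof of its own and defers to \cite[Lemma~4.5]{CKKW2021}. Your argument is the standard one for this Carlson-type inequality: split $\mathbb{R}^N$ into $B_R(0)$ and its complement, apply Cauchy--Schwarz to each piece (using that $\int_{|x|\ge R}|x|^{-2k}\,\mathrm{d}x=C(N,k)R^{N-2k}$ is finite exactly because $2k>N$), and choose $R=(\||\cdot|^kf\|_2/\|f\|_2)^{1/k}$ to balance the two terms; all the exponents check out.
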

This lemma is exactly the same as in Cygan--Karch--Krawczyk--Wakui~\cite[Lemma~4.5]{CKKW2021}.
Hence we omit the proof of Lemma~\ref{lem:CKKW}.
\begin{Lemma}
\label{lem:L^2}
	Let $N\ge1$ and $\alpha\in\mathbb{Z}^N_{+}$.
	Then there exists $C=C(N, \alpha)>0$ such that
	\[
		\|
			\partial_x^\alpha 
			F(t)
		\|_{2}
		\le 
		C
		\Short{t}^{-\frac{N}{4}-\frac{|\alpha|}{2}}
		\Long{t}^{-\frac{N}{8}-\frac{|\alpha|}{4}},
		\quad
		\|
			\partial_x^\alpha 
			F_j(t)
		\|_{2}
		\le 
		C
		t^{-\frac{N}{4j}-\frac{|\alpha|}{2j}},
	\]
	for $j \in \{ 1, 2 \}$ and $t>0$.
\end{Lemma}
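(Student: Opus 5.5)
The plan is to reduce both estimates to the integral bound of Lemma~\ref{lem:h-4} via the Plancherel theorem. By definition \eqref{fn;F} and \eqref{fn;Fj}, the Fourier transforms of the functions in question are
\[
\widehat{\partial_x^\alpha F(t)}(\xi)=(i\xi)^\alpha \mathrm{e}^{-th(\xi)},
\qquad
\widehat{\partial_x^\alpha F_j(t)}(\xi)=(i\xi)^\alpha \Phi_j(\xi)\mathrm{e}^{-th(\xi)}.
\]
These lie in $L^1\cap L^2$ for each $t>0$, since $h(\xi)\ge |\xi|^2/4$ for large $|\xi|$ by Lemma~\ref{lem:h-2}. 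Plancherel then gives
\[
\|\partial_x^\alpha F(t)\|_2=(2\pi)^{-\frac N2}\bigl\|\,|\xi^\alpha|\,\mathrm{e}^{-th(\xi)}\bigr\|_{L^2_\xi}
\le C\bigl\|\,|\cdot|^{|\alpha|}\mathrm{e}^{-th(\cdot)}\bigr\|_{2},
\]
using $|\xi^\alpha|\le|\xi|^{|\alpha|}$.

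For the full kernel $F$, I apply the second estimate of Lemma~\ref{lem:h-4} with $s=|\alpha|$, $\sigma=0$ and $r=2$. The condition $N+(s+2\sigma)r=N+2|\alpha|>0$ holds trivially. This gives the bound $C\Short{t}^{-\frac N4-\frac{|\alpha|}{2}}\Long{t}^{-\frac N8-\frac{|\alpha|}4}$, which is exactly the claimed one.

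For $F_j$, I use $0\le\Phi_j\le1$ together with the support properties: $\Phi_1=1-\Phi$ vanishes on $B_1(0)$, and $\Phi_2=\Phi$ vanishes outside $B_2(0)$. Hence $\Phi_j\le\chi_j$, with $\chi_j$ as in Lemma~\ref{lem:h-3}, and so
\[
\|\partial_x^\alpha F_j(t)\|_2\le C\bigl\|\,|\cdot|^{|\alpha|}\mathrm{e}^{-th(\cdot)}\chi_j(\cdot)\bigr\|_2 .
\]
The first estimate of Lemma~\ref{lem:h-4}, with the same parameters, bounds the right-hand side by $Ct^{-\frac{N}{4j}-\frac{|\alpha|}{2j}}$. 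This is exactly the claim.

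There is no real obstacle here; the only point requiring care is checking that the hypotheses of Lemma~\ref{lem:h-4} are met.
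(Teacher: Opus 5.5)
Your proof is correct and matches the paper's argument: Plancherel reduces the two bounds to $\|\,|\cdot|^{|\alpha|}\mathrm{e}^{-th(\cdot)}\|_2$ and $\|\,|\cdot|^{|\alpha|}\mathrm{e}^{-th(\cdot)}\chi_j(\cdot)\|_2$, and Lemma~\ref{lem:h-4} with $s=|\alpha|$, $\sigma=0$, $r=2$ then gives the stated bounds. Your explicit checks that $\Phi_j\le\chi_j$ and $N+2|\alpha|>0$ supply details the paper leaves implicit.
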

\begin{proof}
	Applying the Plancherel theorem, we observe from Lemma~\ref{lem:h-4} that
	\begin{alignat*}{1}
		\|
			\partial_x^\alpha F(t)
		\|_2
		\le \, & 
		C
		\|
			|\cdot|^{|\alpha|}
			\mathrm{e}^{-th(\cdot)}
		\|_{2}
		\le 
		C
		\Short{t}^{-\frac{N}{4}-\frac{|\alpha|}{2}}
		\Long{t}^{-\frac{N}{8}-\frac{|\alpha|}{4}},\\
		\|
			\partial_x^\alpha F_{j}(t)
		\|_2
		\le \, & 
		C
		\|
			|\cdot|^{|\alpha|}
			\mathrm{e}^{-th(\cdot)}
			\chi_j(\cdot)
		\|_2
		\le 
		C 
		t^{-\frac{N}{4j}-\frac{|\alpha|}{2j}},
	\end{alignat*}
	for $j \in \{ 1, 2 \}$ and $t > 0$.
	Therefore, 
	the desired estimate of Lemma~\ref{lem:L^2} follows.
\end{proof}

\begin{Lemma}
\label{lem:L^2derivative}
	Let $N \ge 1$, $k > 0$, $m \in \mathbb{Z}_{+}$, 
	and 
	$
		\alpha
		\in
		\mathbb{Z}^N_{+}
	$ 
	satisfy 
	$
		0
		<
		k
		<
		2
		m
		<
		N/2
		+
		k
	$.
	\begin{enumerate}
		\item
			Assume that $k<N/2+|\alpha|+2$ 
			and
			$
				0
				\le 
				|\alpha|
				<
				2m
			$.
			Then there exists $C=C(N, k, m, \alpha)>0$ such that
			\[
				\|
					|\cdot|^k 
					\partial_x^\alpha F(t)
				\|_2
				\le 
				C
				\Short{t}^{-\frac{N}{4}-\frac{|\alpha|}{2}+\frac{k}{2}}
				\Long{t}^{-\frac{N}{8}-\frac{|\alpha|}{4}+\frac{k}{4}}
			\]
			for $t > 0$.
		\item
			Assume that $k < N/2 + |\alpha|$.
			Then there exists $C=C(N, k, m, \alpha)>0$ such that
			\[
				\|
					|\cdot|^k
					\partial_x^\alpha F_{j}(t)
				\|_2
				\le 
				C
				t^{-\frac{N}{4j}-\frac{|\alpha|}{2j}+\frac{k}{2j}}
			\]
			for $j \in \{ 1, 2 \}$ and $t > 0$.
	\end{enumerate}
\end{Lemma}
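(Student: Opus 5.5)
The plan is to pass to the Fourier side. By Plancherel, $\||\cdot|^k\partial_x^\alpha F(t)\|_2$ is comparable to $\|(-\Delta_\xi)^{k/2}(\xi^\alpha \mathrm{e}^{-th(\xi)})\|_2$, a homogeneous Sobolev norm of order $k$ of the symbol. Since $k$ need not be an integer, I will interpolate between the integer orders $0$ and $2m$:
\[
	\|(-\Delta_\xi)^{k/2}g\|_2
	\le
	\|g\|_2^{1-\frac{k}{2m}}\,
	\|(-\Delta_\xi)^{m}g\|_2^{\frac{k}{2m}}.
\]
This is elementary by Hölder in the variable $x$, since it reads $\||x|^kf\|_2\le \|f\|_2^{1-\theta}\||x|^{2m}f\|_2^{\theta}$ with $\theta=k/(2m)$. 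Here $g(\xi)=\xi^\alpha\mathrm{e}^{-th(\xi)}$, or $\xi^\alpha\Phi_j(\xi)\mathrm{e}^{-th(\xi)}$ for part (2).

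\emph{Step 1 (order $0$).} The $L^2$ norm of $g$ is already bounded by Lemma~\ref{lem:L^2}. The target exponents interpolate linearly: the factor $t^{k/2}$, resp.\ $\Short{t}^{k/2}\Long{t}^{k/4}$, should come out as $\theta$ times the $2m$-order gain.

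\emph{Step 2 (order $2m$).} Expand $(-\Delta_\xi)^m$ into a finite sum of $\partial_\xi^\beta$ with $|\beta|=2m>|\alpha|$, so that Lemma~\ref{lem:h-3} applies. This gives
\[
	|(-\Delta_\xi)^m g|
	\le
	C|\xi|^{|\alpha|-2m}\mathrm{e}^{-th}\sum_{\ell}t^\ell h^\ell .
\]
The sum runs over $\ell\ge1$ for $F$ and over $\ell\ge0$ with $\chi_j$ inserted for $F_j$. Each term is then bounded with Lemma~\ref{lem:h-4}, taking $r=2$, $s=|\alpha|-2m$ and $\sigma=\ell$; there $t^\ell\cdot t^{-\ell}$ cancels. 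This yields $\Short{t}^{-\frac N4-\frac{|\alpha|-2m}{2}}\Long{t}^{-\frac N8-\frac{|\alpha|-2m}{4}}$ for $F$, and $t^{-\frac{N}{4j}-\frac{|\alpha|-2m}{2j}}$ for $F_j$. Interpolating with Step 1 with weight $\theta=k/(2m)$ gives exactly the claimed exponents.

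\emph{Main obstacle: integrability conditions.} Lemma~\ref{lem:h-4} requires $N+2(s+2\sigma)>0$, which is where the hypotheses enter.
\begin{enumerate}
\item For $F$ the sum starts at $\ell\ge1$, so the worst case is $\sigma=1$, requiring $N/2+|\alpha|+2-2m>0$. This holds when $2m<N/2+|\alpha|+2$, but the stated hypotheses only give $2m<N/2+k$ and $k<N/2+|\alpha|+2$. I expect I must instead choose the $m$ in the interpolation suitably rather than the given $m$. Alternatively I can use the hypothesis form directly: near $\xi=0$ the leading behaviour $h\sim|\xi|^4$ gives the extra factor $|\xi|^4$ per $\ell\ge1$, which should be what makes the condition work. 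If the naive check fails, I will split $\ell=1$ low-frequency terms more carefully.
\item For $F_1$ the support avoids the origin, so there is no singularity issue.
\item For $F_2$ the $\ell=0$ term comes from derivatives hitting $\Phi$. These are supported in the annulus $1\le|\xi|\le2$, where they are bounded and decay like $\mathrm{e}^{-ct}$, which is harmless. The remaining $\ell=0$ terms do not occur, because pure $\xi^\alpha$ derivatives vanish once $|\beta|>|\alpha|$. Hence only $\ell\ge1$ contributes, and the condition $k<N/2+|\alpha|$ together with $2m<N/2+k$ should suffice.
\end{enumerate}

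Finally, I will justify applying these bounds to the Schwartz-class-like symbols and handle the density details routinely.
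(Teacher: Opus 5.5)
Your interpolation inequality $\||x|^kf\|_2\le\|f\|_2^{1-\theta}\||x|^{2m}f\|_2^{\theta}$ and Steps 1--2 are correct, but they only prove the lemma in the subrange $2m<N/2+|\alpha|+2$ for part (1), resp.\ $2m<N/2+|\alpha|$ for part (2), where the $\ell=0$ terms enter. The obstacle you flag is a genuine gap, and none of your remedies closes it.

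\textbf{Why changing $m$ fails.} You would need an integer order $n\ge k$ below these thresholds, and such an integer need not exist. Above the threshold, the order-$n$ bound with the claimed time weight is actually \emph{false} for small $t$. The cause is high frequencies, not $\xi=0$, so splitting off $\ell=1$ near the origin cannot help.

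\textbf{Counterexamples.}
\begin{enumerate}
\item Part (1): take $N=3$, $\alpha=0$, $k=13/4$, $m=2$. All hypotheses hold, and $[13/4,7/2]$ contains no integer. The expansion of $\Delta_\xi^2\mathrm{e}^{-th}$ contains $-t(\Delta^2h)\mathrm{e}^{-th}$, where $\Delta^2h=\Delta^2(1+|\xi|^2)^{-1}\in L^2(\mathbb{R}^3)\setminus\{0\}$. All other terms are $O(t^{5/4})$ in $L^2$ for $t\le 1$. Hence $\||x|^4F(t)\|_2\ge ct$ as $t\to0$, while your order-$4$ target is $t^{5/4}$.
\item Part (2): take $N=5$, $|\alpha|=1$, $k=13/4$, $m=2$. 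As $t\to0$, $\||x|^4\partial_x^\alpha F_j(t)\|_2\to\||x|^4\partial_x^\alpha\mathcal{F}^{-1}[\Phi]\|_2>0$ for $j=1,2$. The order-$4$ targets, however, are $t^{1/4}$ and $t^{1/8}$, which tend to $0$.
\end{enumerate}
So your claims that the annulus-supported $\ell=0$ terms are harmless, and that $F_1$ has ``no issue'', are wrong at small times. These small-time, high-frequency effects are exactly what the hypotheses $k<N/2+|\alpha|$ and $k<N/2+|\alpha|+2$ control. Even the parameters used later in Lemma~\ref{lem:L^1} escape your argument: for instance, $N=4$, $|\alpha|=1$, $k=9/4$, $m=2$ leaves no even order in $[9/4,3)$.

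\textbf{The missing idea.} Avoid any integer-order $L^2$ bound, and measure the $2m$-th derivative of the symbol in $L^r$ with $r<2$. Since $0<2m-k<N$, one has $\mathcal{F}[|\cdot|^kf]=C_{N,m,k}\,|\cdot|^{2m-k-N}\ast(-\Delta)^m\mathcal{F}[f]$. The Hardy--Littlewood--Sobolev inequality then gives $\||\cdot|^kf\|_2\le C\|(-\Delta_\xi)^m\mathcal{F}[f]\|_r$ with $r=2N/(N+2(2m-k))\in(1,2)$; this is precisely where $2m<N/2+k$ is used. Lemmas~\ref{lem:h-3} and \ref{lem:h-4} then apply with this $r$, $s=|\alpha|-2m$, $\sigma=\ell$, because
\[
N+(|\alpha|-2m+2)r=\frac{2N}{N+2(2m-k)}\Bigl(\frac N2+|\alpha|+2-k\Bigr)>0 ,
\]
and similarly $N+(|\alpha|-2m)r>0$ if and only if $k<N/2+|\alpha|$, which covers the sums starting at $\ell=0$. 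The integrability condition now involves $k$ rather than $2m$. Moreover $-\frac{N}{2r}-\frac{|\alpha|-2m}{2}=-\frac N4-\frac{|\alpha|}{2}+\frac k2$, so the claimed exponents come out directly with no interpolation step.
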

\begin{proof}
	Note that $0<2m-k<N$. 
	For $f \in \mathcal{S}$, 
	it holds that 
	$
		|\cdot|^k f
		\in 
		L^1(\mathbb{R}^N)
	$ 
	and its Fourier transform $\mathcal{F}[|\cdot|^k f]$ is given by 
	$
		C_{N, m, k}
		[
			|\cdot|^{2m-k-N} \ast (-\Delta)^m \mathcal{F}[f]
		]
	$.
	Applying the Hardy--Littlewood--Sobolev inequality (see e.g. Grafakos~\cite[Theorem~1.2.3]{G2014book-2}), 
	we have
	\[
		\|
			[
				|\cdot|^{2m-k-N} \ast (-\Delta)^m \mathcal{F}[f]
			]
		\|_2
		\le 
		C
		\|
			(-\Delta)^m\mathcal{F}[f]
		\|_r
	\]
	for $f \in \mathcal{S}$, 
	where 
	$
		r
		\coloneqq 
		2N/(N+2(2m-k))
		\in
		(1, 2)
	$.
	Combining this with the Plancherel theorem, we obtain
	\begin{equation}
	\label{eq:L^2-1}
		\|
			|\cdot|^k f
		\|_2
		\le 
		C
		\|
			(-\Delta)^m\mathcal{F}[f]
		\|_r
	\end{equation}
	for $f \in \mathcal{S}$.
	
	We show assertion~(1).
	Applying \eqref{eq:L^2-1} with 
	$
		f
		=
		\partial_x^\alpha F(\cdot, t)
	$, 
	we have
	\begin{equation}
	\label{eq:L^2-2}
		\|
			|\cdot|^k 
			\partial^\alpha_x F(t)
		\|_2
		\le 
		C
		\|
			(-\Delta)^m\mathcal{F}[\partial^\alpha_x F(t)]
		\|_r
	\end{equation}
	for $t > 0$.
	Moreover, 
	since Lemma~\ref{lem:h-3} implies that
	\[
		\left|
			(-\Delta)_{\xi}^m\mathcal{F}[\partial^\alpha_x F(t)](\xi)
		\right|
		= 
		\left|
			(-\Delta)_\xi^m
			\left(
				\xi^\alpha 
				\mathrm{e}^{-th(\xi)}
			\right)
		\right|
		\le 
		C
		|\xi|^{|\alpha|-2m}
		\mathrm{e}^{-th(\xi)}
		\sum^{2m}_{\ell=1}t^\ell 
			h(\xi)^\ell
	\]
	for $t > 0$, 
	we observe from \eqref{eq:L^2-2} that
	\[
		\|
			|\cdot|^k 
			\partial^\alpha_x F(t)
		\|_2
		\le 
		C
		\sum^{2m}_{\ell=1}
			t^\ell
			\left\|
				|\cdot|^{|\alpha|-2m}
				h(\cdot)^\ell 
				\mathrm{e}^{-th(\cdot)}
			\right\|_r
	\]
	for $t>0$.
	Since
	\[
		N+(|\alpha|-2m+2\ell)r\ge N+\dfrac{2N(|\alpha|-2m+2)}{N+2(2m-k)}=\dfrac{2N}{N+2(2m-k)}\times \left(\dfrac{N}{2}+|\alpha|+2-k\right)>0,
	\]
	it follows from Lemma~\ref{lem:h-4} that
	\[
		\|
			|\cdot|^k 
			\partial^\alpha_x 
			F(t)
		\|_2
		\le 
		C
		\sum^{2m}_{\ell=1}
			t^\ell
			\times 
			t^{-\ell}
			\Short{t}^{-\frac{N}{2r}-\frac{|\alpha|-2m}{2}}
			\Long{t}^{-\frac{N}{4r}-\frac{|\alpha|-2m}{4}}
		=
		C
		\Short{t}^{-\frac{N}{4}-\frac{|\alpha|}{2}+\frac{k}{2}}
		\Long{t}^{-\frac{N}{8}-\frac{|\alpha|}{4}+\frac{k}{4}}
	\]
	for $t>0$.
	Therefore, assertion~(1) follows.
	
	We finally show assertion~(2).
	By applying an argument similar to that used for assertion~(1), one can verify that
	\[
		\|
			|\cdot|^k 
			\partial^\alpha_x 
			F_j(t)
		\|_{2}
		\le 
		C
		\sum^{2m}_{\ell=0}
		t^\ell
		\left\|
			|\cdot|^{|\alpha|-2m}
			h(\cdot)^\ell 
			\mathrm{e}^{-th(\cdot)}
			\chi_j(\cdot)
		\right\|_r
	\]
	for $j \in \{ 1, 2 \}$ and $t > 0$.
	Since
	\[
		N
		+
		(
			|\alpha|-2m+2\ell
		)
		r
		\ge 
		N
		+
		\dfrac{2N(|\alpha|-2m)}{N+2(2m-k)}
		=
		\dfrac{2N}{N+2(2m-k)}
		\times 
		\left(
			\dfrac{N}{2}
			+
			|\alpha|-k
		\right)
		>
		0,
	\]
	it follows from Lemma~\ref{lem:h-4} that
	\[
		\|
			|\cdot|^k 
			\partial^\alpha_x F_j(t)
		\|_2
		\le 
		C
		\sum^{2m}_{\ell=0}
			t^\ell
			\times 
			t^{-\frac{N}{2jr}-\frac{|\alpha|-2m}{2j}-\ell}
		=
		C
		t^{-\frac{N}{4j}-\frac{|\alpha|}{2j}+\frac{k}{2j}}
	\]
	for $j \in \{ 1, 2 \}$ and $t > 0$.
	Therefore, assertion~(2) follows and the proof of Lemma~\ref{lem:L^2derivative} is complete.
\end{proof}
We are now in a position to complete the proof of Lemma~\ref{lem:L^1}.
\begin{proof}[Proof of Lemma~\ref{lem:L^1}]
	We first show assertion~(1).
	Set
	\[
		k
		\coloneqq
		2
		\left\lfloor
			\dfrac{N}{4}
		\right\rfloor
		+
		2
		\left\lfloor
			\dfrac{|\alpha|}{2}
		\right\rfloor
		+
		\dfrac{7}{4},
		\quad
		m
		\coloneqq
		\left\lfloor
			\dfrac{N}{4}
		\right\rfloor
		+
		\left\lfloor
			\dfrac{|\alpha|}{2}
		\right\rfloor
		+
		1.
	\]
	Then we observe that
	\[
		\dfrac{N}{2}
		<
		k
		<
		2m
		<
		\dfrac{N}{2}
		+
		k,
		\quad
		|\alpha|<2m,
		\quad \text{and} \quad 
		k
		<
		\dfrac{N}{2}+|\alpha|+2.
	\]
	Therefore, we can apply Lemmas~\ref{lem:CKKW}, \ref{lem:L^2}, 
	and \ref{lem:L^2derivative} to derive
	\begin{alignat*}{1}
		\| 
			\partial_x^\alpha F(t)
		\|_1
		\le \, & 
		C
		\|
			\partial_x^\alpha F(t)
		\|^{1-\frac{N}{2k}}_2
		\|
			|\cdot|^k 
			\partial_x^\alpha F(t)
		\|^{\frac{N}{2k}}_2\\
		\le \, & 
		C
		\left(
			\Short{t}^{-\frac{N}{4}-\frac{|\alpha|}{2}}
			\Long{t}^{-\frac{N}{8}-\frac{|\alpha|}{4}}
		\right)^{1-\frac{N}{2k}}
		\left(
			\Short{t}^{-\frac{N}{4}-\frac{|\alpha|}{2}+\frac{k}{2}}
			\Long{t}^{-\frac{N}{8}-\frac{|\alpha|}{4}+\frac{k}{4}}
		\right)^{\frac{N}{2k}}\\
		=\,&
		C
		\Short{t}^{-\frac{|\alpha|}{2}}
		\Long{t}^{-\frac{|\alpha|}{4}}
	\end{alignat*}
	for $t>0$.
	This is the desired estimate of assertion~(1).
	
	We show assertion~(2).
	Set
	\[
		k
		\coloneqq
		\dfrac{N}{2}
		+
		2
		\left\lfloor
			\dfrac{|\alpha|-1}{2}
		\right\rfloor
		+
		\dfrac{1}{4},
		\quad
		m
		\coloneqq
		\left\lfloor
			\dfrac{N}{4}
		\right\rfloor
		+
		\left\lfloor
			\dfrac{|\alpha|-1}{2}
		\right\rfloor
		+
		1.
	\]
	Then we observe that
	\[
		\dfrac{N}{2}
		<
		k
		<
		2m
		<
		\dfrac{N}{2}
		+
		k
		\quad \text{and} \quad 
		k
		<
		\dfrac{N}{2}+|\alpha|.
	\]
	Therefore, we can apply Lemmas~\ref{lem:CKKW}, \ref{lem:L^2}, and \ref{lem:L^2derivative} to derive
	\begin{alignat*}{1}
		\|\partial^\alpha_xF_j(t)\|_{1}
		\le \, &
		C
		\|
			\partial^\alpha_x
			F_j(t)
		\|^{1-\frac{N}{2k}}_2
		\|
			|\cdot|^k 
			\partial^\alpha_x
			F_j(t)
		\|^{\frac{N}{2k}}_2\\
		\le \, &
		C
		\left(
			t^{-\frac{N}{4j}-\frac{|\alpha|}{2j}}
		\right)^{1-\frac{N}{2k}}
		\left(
			t^{-\frac{N}{4j}-\frac{|\alpha|}{2j}+\frac{k}{2j}}
		\right)^{\frac{N}{2k}}
		=
		C
		t^{-\frac{|\alpha|}{2j}}
	\end{alignat*}
	for $j \in \{ 1, 2 \}$ and $t > 0$.
	This is the desired estimate of assertion~(2).
	Thus, we complete the proof of Lemma~\ref{lem:L^1}.
\end{proof}
%============================================================================%
\subsection{Consequences of Proposition~\ref{prop:semigroup}-(1)}
\label{subsec:consequence}
%============================================================================%
In this subsection, we show several properties associated with the convolution with $F$, which are well-known in the case of the heat kernel:
\begin{Corollary}
\label{cor:semigroup}
	Let $N\ge1$, $1\le p\le q\le \infty$, and $f\in\mathcal{L}^p$. 
	\begin{enumerate}
		\item
			The map $[0, \infty)\ni t\mapsto F(t)\ast f \in \mathcal{L}^p$ belongs to $C([0, \infty); \mathcal{L}^p)$,
			where $F(0)\ast f\coloneqq f$.
		\item
			For $\alpha\in\mathbb{Z}^N_{+}$, the map $(0, \infty)\ni t\mapsto \partial_x^\alpha [F(t)\ast f] \in \mathcal{L}^q$ belongs to $C((0, \infty); \mathcal{L}^q)$.
		\item
			Let $\alpha \in \mathbb{Z}_{+}^N$. If $p<q$, then
			\[
				\lim_{t \searrow 0}
					t^{\frac{N}{2}(\frac{1}{p}-\frac{1}{q})+\frac{|\alpha|}{2}}
					\|
						\partial_x^\alpha [F(t) \ast f]
					\|_q
				=
				0.
			\]
	\end{enumerate}
\end{Corollary}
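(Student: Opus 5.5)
The strategy is the standard density argument used for the heat semigroup: prove each assertion first for $f\in\mathcal{S}$, where everything is explicit on the Fourier side, and then pass to general $f\in\mathcal{L}^p$. The passage to the limit uses the uniform bound \eqref{lqlp_est;F} from Proposition~\ref{prop:semigroup}-(1) together with the definition $\mathcal{L}^p=\overline{\mathcal{S}}^{\|\cdot\|_p}$.

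\textbf{Assertion (1).} For $f\in\mathcal{S}$ we have $\mathcal{F}[F(t)\ast f]=\mathrm{e}^{-th}\hat f$, and we must show $\|F(t)\ast f-F(t_0)\ast f\|_p\to0$ as $t\to t_0\ge0$.

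For $p=2$ this follows from Plancherel and dominated convergence, since $|\mathrm{e}^{-th}-\mathrm{e}^{-t_0h}|\,|\hat f|\le 2|\hat f|$.

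For general $p$ I would instead write
\[
F(t)\ast f-f=\int_0^t \partial_s[F(s)\ast f]\,\mathrm{d}s=\int_0^t F(s)\ast g\,\mathrm{d}s,
\qquad g:=\Delta f-\Delta K\ast f=\mathcal{F}^{-1}[-h\hat f].
\]
Here $h\hat f\in\mathcal{S}$ because $h$ is smooth with polynomially bounded derivatives (Lemma~\ref{lem:h-1} gives this for $|\xi|\ge1$, and $h$ is analytic near the origin). Hence $g\in\mathcal{S}\subset\mathcal{L}^p$. Proposition~\ref{prop:semigroup}-(1) with $q=p$, $\alpha=0$ gives $\|F(s)\ast g\|_p\le C\|g\|_p$, so
\[
\|F(t)\ast f-F(t_0)\ast f\|_p\le C|t-t_0|\,\|g\|_p .
\]
This Lipschitz bound gives continuity on $[0,\infty)$. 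We also need $F(t)\ast f\in\mathcal{L}^p$; for $p=\infty$ this follows since $\mathrm{e}^{-th}\hat f\in L^1$, so $F(t)\ast f$ is continuous and vanishes at infinity by Riemann--Lebesgue.

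For general $f\in\mathcal{L}^p$, choose $f_n\in\mathcal{S}$ with $f_n\to f$ in $L^p$. Then
\[
\sup_{t\ge0}\|F(t)\ast(f-f_n)\|_p\le C\|f-f_n\|_p,
\]
so $F(\cdot)\ast f$ is a uniform limit of continuous maps and is therefore continuous.

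\textbf{Assertion (2).} Fix $t_0>0$ and take $t\in[t_0/2,2t_0]$. The semigroup property (from $\mathrm{e}^{-(t+\tau)h}=\mathrm{e}^{-th}\mathrm{e}^{-\tau h}$) gives
\[
\partial_x^\alpha[F(t)\ast f]=\partial_x^\alpha F(t_0/4)\ast\bigl(F(t-t_0/4)\ast f\bigr).
\]
Applying \eqref{lqlp_est;F} to the outer convolution,
\[
\|\partial_x^\alpha[F(t)\ast f]-\partial_x^\alpha[F(t')\ast f]\|_q\le C(t_0)\,\|F(t-t_0/4)\ast f-F(t'-t_0/4)\ast f\|_p ,
\]
which tends to $0$ as $t'\to t$ by (1). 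Membership in $\mathcal{L}^q$ is already part of Proposition~\ref{prop:semigroup}-(1).

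\textbf{Assertion (3).} This is the step needing the most care, since the estimate only gives boundedness of the weighted quantity near $t=0$, and we must upgrade it to decay. Note that for $t\le1$ we have $\Short{t}\sim t$ and $\Long{t}\sim1$, so \eqref{lqlp_est;F} gives
\[
t^{\frac N2(\frac1p-\frac1q)+\frac{|\alpha|}{2}}\|\partial_x^\alpha F(t)\ast \varphi\|_q\le C\|\varphi\|_p\qquad\text{for } t\in(0,1].
\]
For $f_n\in\mathcal{S}$, use \eqref{lqlp_est;F} with exponents $(q,q)$ applied to $f_n\in L^q$ to get $\|\partial_x^\alpha F(t)\ast f_n\|_q\le Ct^{-|\alpha|/2}\|f_n\|_q$. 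Hence
\[
t^{\frac N2(\frac1p-\frac1q)+\frac{|\alpha|}{2}}\|\partial_x^\alpha F(t)\ast f_n\|_q\le C\,t^{\frac N2(\frac1p-\frac1q)}\|f_n\|_q\to0\quad\text{as } t\searrow0,
\]
because $p<q$. Splitting $f=f_n+(f-f_n)$ then yields
\[
\limsup_{t\searrow0}\, t^{\frac N2(\frac1p-\frac1q)+\frac{|\alpha|}{2}}\|\partial_x^\alpha[F(t)\ast f]\|_q\le C\|f-f_n\|_p ,
\]
and letting $n\to\infty$ proves (3).

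The main obstacle is checking in (1) that $g\in\mathcal{S}$, that is, that multiplication by $h$ preserves $\mathcal{S}$, together with the $\mathcal{L}^\infty$ membership when $p=\infty$.
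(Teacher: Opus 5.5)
Your proof is correct. Assertions (2) and (3) follow the paper's route, while the Schwartz-data step of (1) is genuinely different.

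\textbf{Assertions (2) and (3).} For (2), the paper also uses the semigroup identity $\partial_x^\alpha[F(t)\ast f]=F(t-\delta)\ast(\partial_x^\alpha[F(\delta)\ast f])$ and then invokes (1). It places the smoothing factor inside and uses continuity in $\mathcal{L}^q$. You place $\partial_x^\alpha F(t_0/4)$ outside and use continuity in $\mathcal{L}^p$, which is an inessential difference. For (3), the paper cites exactly the Brezis--Cazenave density argument that you write out.

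\textbf{Assertion (1) for $f\in\mathcal{S}$.} The paper proves continuity of $t\mapsto F(t)\ast f$ in two separate norms:
\begin{itemize}
\item in $L^\infty$, by bounding with the $L^1$ norm of the Fourier transform and applying dominated convergence;
\item in $L^1$, via the interpolation inequality of Lemma~\ref{lem:CKKW}. The $L^2$ factor tends to zero by Plancherel and dominated convergence, and the weighted $L^2$ factor stays bounded by Lemmas~\ref{lem:L^1} and \ref{lem:L^2derivative}.
\end{itemize}
It then reaches the intermediate $\mathcal{L}^r$ by H\"older.

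You use the generator instead. Since $h$ is rational with nonvanishing denominator and derivatives of polynomial growth, $g=\mathcal{F}^{-1}[-h\hat f]\in\mathcal{S}$. The identity $F(t)\ast f-F(t_0)\ast f=\int_{t_0}^t F(s)\ast g\,\mathrm{d}s$, together with $\sup_{s>0}\|F(s)\|_1<\infty$, then gives the Lipschitz bound $C|t-t_0|\,\|g\|_p$ directly in every $L^p$.

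\textbf{What each route buys.} Yours is shorter and quantitative. It needs only the $\alpha=0$ case of Lemma~\ref{lem:L^1}, and it avoids both the weighted $L^2$ estimates and the interpolation step. The paper's route is purely qualitative and stays on the Fourier side. It never has to identify the generator or check that multiplication by $h$ preserves $\mathcal{S}$. That check is the only extra point in your approach, and here it is immediate.
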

\begin{proof}
	Set $g(t)\coloneqq F(t)\ast f$ and assume $f\in \mathcal{S}$.
	Since $F(t)\in\mathcal{S}$ for $t>0$, we have $g(t)\in \mathcal{L}^p$ for $1\le p\le \infty$ and $t\ge 0$.
	
	We prove that $g\in C([0, \infty); \mathcal{L}^p)$. Let $t_1>t_2\ge 0$. Simple calculations give
	\begin{alignat*}{1}
		\|g(t_1)-g(t_2)\|_{\infty}
		&\le C\|\mathcal{F}[g(t_1)-g(t_2)]\|_{1}\\
		&\le C\int_{\mathbb{R}^N}|{\rm e}^{-t_1h(\xi)}-{\rm e}^{-t_2h(\xi)}||\mathcal{F}[f](\xi)|\,\mathrm{d}\xi,
	\end{alignat*}
	where $\mathcal{F}[h]$ denotes the Fourier transform of $h$.
	Applying the dominated convergence theorem, 
	we see that $g\in C([0, \infty); \mathcal{L}^\infty)$.
	On the other hand, let $k$ and $m$ be 
	\begin{equation}\label{ex;km}
		k
		\coloneqq
		2
		\left\lfloor
			\dfrac{N}{4}
		\right\rfloor
		+
		\dfrac{7}{4},
		\quad
		m
		\coloneqq
		\left\lfloor
			\dfrac{N}{4}
		\right\rfloor
		+
		1.
	\end{equation}
	We then deduce from Lemma \ref{lem:CKKW}
	and the Plancherel theorem
	that
	\begin{equation}\label{est;L^2t1t2}
	\begin{split}
	\|g(t_1)-g(t_2)\|_{1}
	\le &C\|g(t_1)-g(t_2)\|_{2}^{1-\frac{N}{2k}}\||\cdot|^{k}g(t_1)-|\cdot|^{k}g(t_2)\|_{2}^{\frac{N}{2k}} \\
	\le &C\|({\rm e}^{-t_2h(\xi)}-{\rm e}^{-t_1h(\xi)})\mathcal{F}[f]\|_{2}^{1-\frac{N}{2k}}\||\cdot|^{k}g(t_1)-|\cdot|^{k}g(t_2)\|_{2}^{\frac{N}{2k}}. 
	\end{split}
	\end{equation}
	By Lemmas~\ref{lem:L^1}, \ref{lem:L^2derivative}, and the H\"{o}lder inequality,
	we have 
	\[
	\||\cdot|^kg(t_1)-|\cdot|^kg(t_2)\|_2 
	\le C\sum_{j=1}^2\left[\Short{t_j}^{-\frac{N}{4}+\frac{k}{2}}\Long{t_j}^{-\frac{N}{8}+\frac{k}{4}}\|f\|_1+\||\cdot|^kf\|_2\right].
	\]
	This together with \eqref{est;L^2t1t2} yields that 
	\begin{align*}
	&
	\|g(t_1)-g(t_2)\|_{1} \\
	\le \, & 
	C\|({\rm e}^{-t_2h(\xi)}-{\rm e}^{-t_1h(\xi)})\mathcal{F}[f]\|_{2}^{1-\frac{N}{2k}}
	\left(\sum_{j=1}^2\left[\Short{t_j}^{-\frac{N}{4}+\frac{k}{2}}\Long{t_j}^{-\frac{N}{8}+\frac{k}{4}}\|f\|_1+\||\cdot|^kf\|_2\right]\right)^{\frac{N}{2k}}. 
	\end{align*}
	Therefore, making use of the dominated convergence theorem again,
	we obtain $g\in C([0, \infty); \mathcal{L}^1)$ and, by the H\"{o}lder inequality we have $g\in C([0, \infty); \mathcal{L}^r)$ for $r\in [1,\infty]$ if $f\in \mathcal{S}$.
	Consequently, assertion~(1) follows from \eqref{lqlp_est;F} and the density of $\mathcal{S}$ in $\mathcal{L}^p$.

	We next claim that $\partial_x^\alpha g\in C((0, \infty); \mathcal{L}^q)$ for $p\le q\le \infty$ and $\alpha\in\mathbb{Z}^N_{+}$.
	Fix $\delta>0$ arbitrarily.
	Then we have 
	$
		\partial_x^\alpha g(t)
		=
		F(t-\delta) \ast (\partial_x^\alpha [F(\delta)\ast f])
	$
	for 
	$
		t
		>
		\delta
	$.
	Since $\partial_x^\alpha [F(\delta)\ast f]\in \mathcal{L}^q$ due to Proposition~\ref{prop:semigroup}-(1), assertion~(1) implies that $\partial_x^\alpha g\in C([\delta, \infty); \mathcal{L}^q)$, which gives assertion~(2).

	By Proposition~\ref{prop:semigroup}-(1), we see that
	assertion~(3) holds 
	by combining \eqref{lqlp_est;F} with the same argument as in Brezis--Cazenave~\cite[Lemma~8]{BC1996}.
	As a consequence, we complete the proof of Corollary~\ref{cor:semigroup}.
\end{proof}

%============================================================================%
%============================================================================%
\section{{Global existence of solutions to problem~\eqref{eq:KS_A}}}
\label{sec:perturbation}
%============================================================================%
%============================================================================%
In this section, we investigate the existence and the asymptotic behavior of global-in-time solutions to problem~\eqref{eq:KS_A}.
Similarly to problem~\eqref{eq:KS}, we define the solution to problem~\eqref{eq:KS_A} as follows:
\begin{Definition}
\label{def:KSAsol}
	Let $N\ge3$, $T\in(0, \infty]$, and $v_0\in L^\frac{N}{2}(\mathbb{R}^N)$.
	We say that $v$ is a solution to problem~\eqref{eq:KS_A} if 
	$
		v
		\in 
		C([0, T); L^\frac{N}{2}(\mathbb{R}^N))
	$ 
	with 
	$
		v(0)
		=
		v_0
	$ 
	and $v$ satisfies
	\begin{equation}
	\label{eq:IE}
		v(t)
		=
		F(t) \ast v_0
		-
		\int^t_0
			\nabla \cdot 
			\left[
				F(t-s) \ast 
				(
					v(s) \nabla K \ast v(s)
				)
			\right]
		\, \mathrm{d}s
	\end{equation}
	in $L^\frac{N}{2}(\mathbb{R}^N)$ for $t \in (0, T)$.
	If $T=\infty$, $v$ is called a global-in-time solution to problem~\eqref{eq:KS_A}.
\end{Definition}
The aim of this section is to show the following:
\begin{Theorem}
\label{thm:main}
	Let $N\ge3$.
	Then there exist constants $\varepsilon_* > 0$ and $C>0$ with the following property:
	if 
	$
		v_0 
		\in 
		L^\frac{N}{2}(\mathbb{R}^N)
	$ 
	satisfies
	\begin{equation}
	\label{eq:initial}
		\|
			v_0
		\|_{\frac{N}{2}}
		\le 
		\varepsilon_*,
	\end{equation}
	then there exists a global-in-time solution $v$ to problem~\eqref{eq:KS_A} such that 
	\[
		v, \nabla v\in C((0, \infty); \mathcal{L}^{r})
	\]
	and
	\[
		\Short{t}^{\frac{N}{2}(\frac{2}{N}-\frac{1}{r})}
		\Long{t}^{\frac{N}{4}(\frac{2}{N}-\frac{1}{r})}
		\|
			v(t)
		\|_{r}
		+
		\Short{t}^{\frac{N}{2}(\frac{2}{N}-\frac{1}{r})+\frac{1}{2}}
		\Long{t}^{\frac{N}{4}(\frac{2}{N}-\frac{1}{r})+\frac{1}{4}}
		\|
			\nabla v(t)
		\|_{r}
		\le
		C
		\|
			v_0
		\|_{\frac{N}{2}}
	\]
	for $r \in [N/2, \infty]$ and $t>0$.
	In particular, 
	$v(t)$ converges to $0$ as $t \to \infty$ in $W^{1,r}(\mathbb{R}^N)$
	for $r \in (N/2, \infty]$.
\end{Theorem}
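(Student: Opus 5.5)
We will prove Theorem~\ref{thm:main} by a contraction argument for the integral equation \eqref{eq:IE} in a time-weighted space. The weights are exactly those produced by Proposition~\ref{prop:semigroup}-(1). Fix an auxiliary exponent $q\in(N/2,N)$ and define
\[
\norm{v}\coloneqq\sup_{t>0}\Bigl[\Short{t}^{\frac{N}{2}(\frac{2}{N}-\frac{1}{q})}\Long{t}^{\frac{N}{4}(\frac{2}{N}-\frac{1}{q})}\|v(t)\|_{q}+\Short{t}^{\frac{N}{2}(\frac{2}{N}-\frac{1}{q})+\frac12}\Long{t}^{\frac{N}{4}(\frac{2}{N}-\frac{1}{q})+\frac14}\|\nabla v(t)\|_{q}\Bigr]
\]
on $X\coloneqq\{v:\ v,\nabla v\in C((0,\infty);\mathcal{L}^q),\ \norm{v}<\infty\}$. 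By \eqref{lqlp_est;F} with $p=N/2$, the linear part satisfies $\norm{F(\cdot)\ast v_0}\le C\|v_0\|_{N/2}$. It therefore suffices to prove the bilinear bound $\norm{B(v,w)}\le C\norm{v}\norm{w}$, where
\[
B(v,w)(t)\coloneqq\int_0^t\nabla\cdot[F(t-s)\ast(v(s)\nabla K\ast w(s))]\,\mathrm{d}s .
\]
Banach's fixed point theorem then gives a unique small solution in $X$ once $\varepsilon_*$ is small.

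For the bilinear estimate, we first control $\nabla K\ast w$. Since $|\nabla K(x)|\le C|x|^{1-N}$ near the origin and $\nabla K$ decays exponentially, the Hardy--Littlewood--Sobolev inequality together with Young's inequality gives $\|\nabla K\ast w\|_{\tilde q}\le C\|w\|_q$ with $1/\tilde q=1/q-1/N$, and also $\|\nabla K\ast w\|_{q}\le C\|w\|_q$. By Hölder, $v\nabla K\ast w$ then lies in $L^{p}$ with $1/p=2/q-1/N$, and $p\ge1$ because $q>N/2$. We apply \eqref{lqlp_est;F} from $L^p$ to $L^q$ with one or two derivatives. The remaining task is to show that the resulting time integrals reproduce the weights. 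On $(0,t/2)$ the kernel factor is harmless, and the integrability at $s=0$ follows from the choice $q<N$. The $s$-weights on $(0,t/2)$ are $\Short{s}^{-a}\Long{s}^{-a/2}$ with $a<1$.

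The main obstacle is the range $s\in(t/2,t)$ for large $t$, and above all the estimate of $\nabla^2$ (the $\nabla v$ component). Here \eqref{lqlp_est;F} carries the factor $\Short{t-s}^{-\frac{N}{2}(\frac1p-\frac1q)-1}$, which is not integrable near $s=t$. To handle this I will split $F=F_1+F_2$. For the low-frequency part $F_2$, Proposition~\ref{prop:semigroup}-(2) gives $(t-s)^{-\frac N4(\frac1p-\frac1q)-\frac{|\alpha|}{4}}$, which is integrable. For the high-frequency part $F_1$, the bound $(t-s)^{-\frac N2(\frac1p-\frac1q)-\frac{|\alpha|}{2}}$ is also exponentially decaying (see the proof of Lemma~\ref{lem:h-4}), but its singularity is too strong for two derivatives. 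In that case I will place one derivative on the nonlinearity, writing $\nabla\cdot(v\nabla K\ast w)=\nabla v\cdot\nabla K\ast w+v\,(\Delta K\ast w)$ with $\Delta K\ast w=K\ast w-w$, and use the $\nabla v$ part of the norm. What remains then has only $(t-s)^{-1/2-\frac N2(\frac1p-\frac1q)}$, which is integrable since $\frac N2(\frac1p-\frac1q)=\frac{N}{2q}-\frac12<\frac12$. I expect that matching the exponents of $\Short{\cdot}$ and $\Long{\cdot}$ in the time integrals, via Beta-function type computations, will work out precisely because of the scaling invariance of $L^{N/2}$ noted in the introduction.

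Having obtained the fixed point in $X$, I will extend the estimates to every $r\in[N/2,\infty]$. This is done by re-estimating \eqref{eq:IE} with target exponent $r$, using the same splitting together with the bounds already available in $X$; for $r=\infty$ the split into $(0,t/2)$ and $(t/2,t)$ may need an intermediate Lebesgue exponent. The continuity properties $v,\nabla v\in C((0,\infty);\mathcal{L}^r)$ and $v\in C([0,\infty);L^{N/2})$ follow from Corollary~\ref{cor:semigroup}. In particular, Corollary~\ref{cor:semigroup}-(3) shows that the weighted norm of the Duhamel term vanishes as $t\searrow0$, and this gives continuity at $t=0$. Finally, decay in $W^{1,r}$ for $r>N/2$ is immediate, because the exponents $\frac N4(\frac2N-\frac1r)$ are then positive.
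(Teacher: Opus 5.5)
Your plan has a genuine gap: the bilinear estimate does not close for large times. For $s\ge1$ your bound gives $\|v(s)\nabla K\ast w(s)\|_p\le C\|v(s)\|_q\|w(s)\|_q\lesssim s^{-1+\frac{N}{2q}}\norm{v}\norm{w}$ with $\frac1p=\frac2q-\frac1N$. Proposition~\ref{prop:semigroup}-(2) for $F_2$ with one derivative gives the factor $(t-s)^{-\frac N4(\frac1p-\frac1q)-\frac14}=(t-s)^{-\frac{N}{4q}}$. Both $\int_1^{t/2}$ and $\int_{t/2}^t$ then produce $t^{\frac{N}{4q}}$. Your norm, however, requires $\|v(t)\|_q\lesssim t^{-\frac12+\frac{N}{4q}}$.

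So the estimate already fails for the undifferentiated component, not only for $\nabla^2$. The singularity at $s=t$, which your $F_1/F_2$ splitting does handle, is not the real obstacle. The loss of exactly $t^{1/2}$ is structural. HLS treats $\nabla K$ as $\nabla(-\Delta)^{-1}$, an operator of order $-1$; that is its high-frequency behaviour. At low frequencies its symbol $i\xi/(1+|\xi|^2)\approx i\xi$ has order $+1$, and in the fourth-order scaling $x\sim t^{1/4}$ two orders of differentiation cost $t^{1/2}$.

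Hence the $L^{N/2}$-invariance of \eqref{eq;LS} and \eqref{eq;GS} does not make your exponents match. No bound of the nonlinearity in terms of $\|v\|_a\|w\|_b$ alone can work, for any exponents. Even Young's inequality with $\nabla K\in L^1$ in place of HLS still loses $t^{1/4}$. You must exploit that $\nabla K$ acts like a derivative at low frequencies.

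The missing idea is in Steps~2 and~4 of the proof of Proposition~\ref{prop:bilinear}: for $s\ge1$, use the gradient part of the norm on \emph{both} factors. Since $\nabla K\ast w=K\ast\nabla w$, the Sobolev inequality gives $\|v\nabla K\ast w\|_p\le\|v\|_{(\frac1q-\frac1N)^{-1}}\|K\ast\nabla w\|_q\le C\|\nabla v\|_q\|\nabla w\|_q\lesssim s^{-\frac32+\frac{N}{2q}}$. Likewise $\|\nabla\cdot(v\nabla K\ast w)\|_p\le C\|\nabla v\|_q\|\nabla w\|_q$, using $\Delta K\ast w=\nabla K\ast\nabla w$; this covers the $F_1$ piece where you move the divergence onto the product. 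The gain is exactly $s^{-1/2}$, after which the time integrals reproduce the weights (critically so for $F_2$).

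Since $\|\nabla v(s)\|_q\|\nabla w(s)\|_q\sim s^{-3+\frac Nq}$ is not integrable at $s=0$, the range $s\in(0,1)$ must still be treated with your HLS bound. This also covers all of $(0,t)$ when $t\le2$. That part of your plan is sound for $q<N$, and your splitting at $t/2$ even avoids the paper's condition $\frac1a+\frac1b>\frac3N$.

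Two smaller points:
\begin{itemize}
\item With your H\"older pairing, reaching $r=N/2$ through an $L^p\to L^{N/2}$ estimate needs $p\le N/2$, i.e.\ $q\le 2N/3$. The paper takes $\frac{3}{2N}<\frac1q<\frac{25}{16N}$ and then bootstraps to $q_1$, $q_2$, $\infty$ via \eqref{eq:semi}.
\item Continuity of $v$ at $t=0$ in $L^{N/2}$ requires the weighted $L^q$ norm of the fixed point on $(0,t)$ to tend to $0$ as $t\searrow0$. The paper builds this into the space $Y_T$.
\end{itemize}
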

Theorem~\ref{thm:main_KS} is a direct consequence of Theorem~\ref{thm:main}.
Since this is a standard approximation argument, 
we only outline the proof.
Let $u_0$ satisfy \eqref{eq:main_KS_asm} and set $v_0\coloneqq u_0-1$.
Since $v_0$ satisfies \eqref{eq:initial}, 
there exists a global-in-time solution $v$ to problem~\eqref{eq:KS_A} and 
$
	u
	\coloneqq 
	v+1
$
satisfies the desired estimate of Theorem~\ref{thm:main_KS}.
Thus, 
it suffices to show that $u$ is a global-in-time solution to problem~\eqref{eq:KS} with initial data $u_0$.
Fix $(x,t) \in\mathbb{R}^N \times (0, \infty)$ arbitrarily and set 
$
	\zeta(y,s)
	=
	\zeta_{x,t}(y,s)
	\coloneqq 
	G(x-y, t-s)
$, 
where $G$ is the heat kernel defined by \eqref{eq:kernel}.
Multiplying both sides of \eqref{eq:IE} by 
$
	-
	\partial_s\zeta(s)
	-
	\Delta \zeta(s)
	+
	\Delta K \ast \zeta(s)
$, 
integrating over $\mathbb{R}^N \times(0, t)$, 
and integrating by parts, 
we have
\begin{alignat*}{1}
	&
	\int^t_{0}
		\int_{\mathbb{R}^N}
			v(s)
			\left(
				-
				\partial_s \zeta(s)
				-
				\Delta \zeta(s)
				+
				\Delta K \ast \zeta(s)
			\right)
			\, \mathrm{d}y \, \mathrm{d}s\\
	= \, &
	-
	\int_{\mathbb{R}^N}
		v_0
		\left[
			\zeta(t)\ast F(t)
		\right]
		\, \mathrm{d}y
		+
		\int_{\mathbb{R}^N}
			v_0
			\zeta(0)
		\, \mathrm{d}y
		-
		\int^t_{0}
			\int_{\mathbb{R}^N}
				\left(
					v(\tau) 
					\nabla K \ast v(\tau)
				\right)
				\cdot 
				\nabla \zeta(\tau)
			\, \mathrm{d}y
		\, \mathrm{d}\tau\\
	\, & 
	+
	\int^t_{0}
		\int_{\mathbb{R}^N}
			\left(
				v(\tau) \nabla K \ast v(\tau)
			\right)
		\cdot 
		\left[
			\nabla \zeta(t) \ast F(t-\tau)
		\right]
		\, \mathrm{d}y
	\, \mathrm{d}\tau.
\end{alignat*}
Then, by the definition of $\zeta$ we obtain
\begin{alignat*}{1}
	&
	[
		F(t)\ast v_0
	](x)
	-
	\int^t_{0}
		\nabla \cdot
		\left[
			F(t-\tau) \ast 
			\left(
				v(\tau) \nabla K \ast v(\tau)
			\right) 
		\right](x)
	\, \mathrm{d}\tau
	+
	1\\
	= \, &
	[
		G(t)\ast u_0
	](x)
	-
	\int^t_{0}
		\nabla \cdot
		\left[
			G(t-\tau) \ast 
			\left(
				u(\tau) \nabla K\ast u(\tau)
			\right) 
		\right](x)
	\, \mathrm{d}\tau.
\end{alignat*}
Since the left-hand side of the above equation is nothing but $u$, 
it follows that $u$ is a global-in-time solution to problem~\eqref{eq:KS} with initial data $u_0$, 
and thus we complete the proof of Theorem~\ref{thm:main_KS}.

In the rest of this section, we give the proof of Theorem~\ref{thm:main}.
We recall some properties of Bessel potential $K$
defined by \eqref{eq:Bessel}.
It is known that $K\ast f$ satisfies
\begin{equation}
\label{eq:poisson}
	(-\Delta +1)K\ast f=f
\end{equation}
for suitable functions $f$.
Moreover, since 
\[
	K\in L^1(\mathbb{R}^N),
	\quad
	\nabla K
	\in
	\left\{
	\begin{alignedat}{2}
		&L^1(\mathbb{R}^N) \cap L^\infty(\mathbb{R}^N)
		&&\quad\text{if} \quad N=1,\\
		&L^1(\mathbb{R}^N) \cap L_{\rm w}^{\frac{N}{N-1}}(\mathbb{R}^N)
		&&\quad\text{if} \quad N \ge 2,
	\end{alignedat}
	\right.
\]
where $L^r_{\rm w}(\mathbb{R}^N)$ is the weak $L^r$ space, we deduce from the refined Young inequality (see e.g. Grafakos~\cite[Theorem~1.4.25]{G2014book-1}) that:
\begin{Proposition}
\label{prop:HLS}
	Let $N\ge1$, $1\le p\le \infty$, and $f \in \mathcal{L}^p$.
	Then there exists $C=C(N, p)>0$ such that
	\[
		\|
			K\ast f
		\|_{p}
		+
		\|
			\nabla K\ast f
		\|_{p}
		\le 
		C
		\|
			f
		\|_{p}.
	\]
	In addition, if $1<p<N$, then
	\[
		\|
			\nabla K \ast f
		\|_{(\frac{1}{p}-\frac{1}{N})^{-1}}
		\le 
		C
		\|
			f
		\|_p.
	\]
\end{Proposition}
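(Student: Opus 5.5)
The plan is to reduce everything to integrability properties of $K$ and $\nabla K$ and then apply Young's inequality for the $L^p$-$L^p$ bound and the refined (weak-type) Young inequality for the $L^p$-$L^{(\frac1p-\frac1N)^{-1}}$ bound. Density of $\mathcal{S}$ in $\mathcal{L}^p$ lets me argue for $f\in\mathcal{S}$ and extend by continuity. The same convolution estimate shows that $K\ast f,\nabla K\ast f$ lie in $\mathcal{L}^p$ again: for $p=\infty$, convolving a $C_0$ function with an $L^1$ kernel gives a $C_0$ function.

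First I would check that $K,\nabla K\in L^1(\mathbb{R}^N)$ directly from the representation \eqref{eq:Bessel}. By Fubini and $\|G(s)\|_1=1$ we get $\|K\|_1=\int_0^\infty \mathrm{e}^{-s}\,\mathrm{d}s=1$. Since $\|\nabla G(s)\|_1=Cs^{-\frac12}$, Minkowski's integral inequality gives
\[
\|\nabla K\|_1\le C\int_0^\infty \mathrm{e}^{-s}s^{-\frac12}\,\mathrm{d}s<\infty .
\]
Young's inequality $\|k\ast f\|_p\le\|k\|_1\|f\|_p$ then yields the first estimate for all $1\le p\le\infty$.

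Next I would establish the pointwise bound $|\nabla K(x)|\le C|x|^{1-N}$ for $N\ge2$. From $\nabla G(x,s)=-\frac{x}{2s}G(x,s)$ and $\mathrm{e}^{-s}\le1$,
\[
|\nabla K(x)|\le C|x|\int_0^\infty s^{-1-\frac N2}\mathrm{e}^{-\frac{|x|^2}{4s}}\,\mathrm{d}s .
\]
The substitution $s=|x|^2\tau$ turns the right-hand side into $C|x|\cdot|x|^{-N}$. Since $|x|^{1-N}\in L^{\frac{N}{N-1}}_{\rm w}(\mathbb{R}^N)$, this gives $\nabla K\in L^{\frac{N}{N-1}}_{\rm w}$.

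Finally, I would apply the refined Young inequality (Grafakos, Theorem~1.4.25) with exponents $1<p<q<\infty$ satisfying $\frac1p+\frac{N-1}{N}=1+\frac1q$. This relation forces $q=(\frac1p-\frac1N)^{-1}$, and the constraint $q<\infty$ is exactly $p<N$. Equivalently, since $|\nabla K\ast f|\le C\,|\cdot|^{1-N}\ast|f|$, one could invoke the Hardy--Littlewood--Sobolev inequality directly. The only delicate point is keeping track of the admissible range $1<p<N$ (strict inequalities at both ends) and confirming that the weak-type kernel suffices. This is precisely what the refined Young inequality guarantees, so I expect no real obstacle.
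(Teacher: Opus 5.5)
Your proposal is correct and follows the same route as the paper. The paper simply records that $K,\nabla K\in L^1(\mathbb{R}^N)$ and that $\nabla K\in L^{\frac{N}{N-1}}_{\rm w}(\mathbb{R}^N)$ for $N\ge2$, then invokes Young's inequality and the refined Young inequality (Grafakos, Theorem~1.4.25). Your Minkowski-inequality computation and the pointwise bound $|\nabla K(x)|\le C|x|^{1-N}$ supply the verification of those kernel properties, which the paper takes as known; note also that for $N=1$ the second estimate is vacuous, since the range $1<p<N$ is empty.
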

Moreover, it is also known that the Calder\'{o}n--Zygmund type estimate holds (see e.g. Stein~\cite[Chapter~V, Section~3, Theorem~3]{S1970book} and \cite[Chapter~V, Section~6, {\bf 6.6}]{S1970book}):
\begin{Proposition}
\label{prop:CZ}
	Let $N\ge1$ and $1\le p\le \infty$.
	Then there exists $C=C(N, p)>0$ such that
	\[
		\|\Delta K\ast f\|_{p}\le C\|f\|_{p}
	\]
	holds for $f\in \mathcal{L}^p$.
\end{Proposition}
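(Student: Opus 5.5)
The plan is to avoid Calder\'{o}n--Zygmund theory altogether and use the identity behind \eqref{eq:poisson}: since $(-\Delta+1)K=\delta$ in the sense of distributions, we expect $\Delta K = K-\delta$, and hence
\[
	\Delta K\ast f = K\ast f - f .
\]
Once this identity is established on $\mathcal{L}^p$, the estimate follows from $K\in L^1(\mathbb{R}^N)$ and the Young inequality, which give $\|\Delta K\ast f\|_p\le (\|K\|_1+1)\|f\|_p$ for every $1\le p\le\infty$, including the endpoints.

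\emph{Step 1: the identity for $f\in\mathcal{S}$.} From \eqref{eq:Bessel} and $\mathcal{F}[G(s)](\xi)=\mathrm{e}^{-s|\xi|^2}$ we get $\mathcal{F}[K](\xi)=\int_0^\infty \mathrm{e}^{-s(1+|\xi|^2)}\,\mathrm{d}s=(1+|\xi|^2)^{-1}$. Then
\[
	\mathcal{F}[\Delta K\ast f]=-\frac{|\xi|^2}{1+|\xi|^2}\mathcal{F}[f]=\Bigl(\frac{1}{1+|\xi|^2}-1\Bigr)\mathcal{F}[f]=\mathcal{F}[K\ast f-f].
\]
Since $\Delta K\ast f=\Delta(K\ast f)$ is well defined for $f\in\mathcal{S}$ (here $K\ast f\in C^\infty$ with bounded derivatives), Fourier inversion gives the identity pointwise.

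\emph{Step 2: extension to $\mathcal{L}^p$.} For $f\in\mathcal{S}$ we now have $\|\Delta K\ast f\|_p\le (\|K\|_1+1)\|f\|_p$. Because $\mathcal{S}$ is dense in $\mathcal{L}^p$ by definition, the operator $f\mapsto \Delta K\ast f$ extends uniquely and boundedly to $\mathcal{L}^p$, and the extension coincides with $K\ast f-f$. For $p=\infty$ this is meaningful because $\mathcal{L}^\infty$ is the space of continuous functions vanishing at infinity, and $K\ast f - f$ stays in that space. The extension also agrees with the distributional derivative $\Delta(K\ast f)$: since $K\ast f_n\to K\ast f$ in $L^p$ whenever $f_n\to f$ in $L^p$, differentiation passes to the limit in $\mathcal{S}'$.

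\emph{Main point to check.} There is no real obstacle here. The only care needed is to interpret $\Delta K\ast f$ correctly. The kernel $\Delta K$ is not an integrable function near the origin: it consists of a principal-value part plus a Dirac mass. The identity $\Delta K=K-\delta$ resolves this, and it is also why the bound holds at the endpoints $p=1$ and $p=\infty$, where a genuine singular integral operator would fail to be bounded.
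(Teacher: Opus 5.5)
Your proof is correct, and it takes a different route from the paper. The paper gives no argument of its own for this proposition; it cites Calder\'{o}n--Zygmund theory for Bessel potentials (Stein, Chapter~V). You instead use the resolvent identity $\Delta(K\ast f)=K\ast f-f$, which you derive from $\mathcal{F}[K](\xi)=(1+|\xi|^2)^{-1}$. After that you only need $K\in L^1(\mathbb{R}^N)$ and Young's inequality. Your density step correctly identifies $K\ast f-f$ with the distributional Laplacian of $K\ast f$ for $f\in\mathcal{L}^p$.

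\textbf{What your route buys.} It is elementary and self-contained. It gives an explicit constant $C=\|K\|_1+1=2$, independent of $N$ and $p$, since $K\ge 0$ and $\int K=\mathcal{F}[K](0)=1$. It also cleanly covers the endpoints $p=1$ and $p=\infty$. There, Calder\'{o}n--Zygmund bounds for the individual second derivatives $\partial_i\partial_j K\ast f$ fail, and the Laplacian estimate survives only because of the cancellation your identity makes explicit.

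\textbf{What the paper's route buys.} For $1<p<\infty$ it controls every second derivative $\partial_i\partial_jK\ast f$, not just the trace. The paper never needs this, though: only $\Delta K\ast g$ appears, in Step~3 of the proof of Proposition~\ref{prop:bilinear}, and there with finite exponents greater than $1$.

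\textbf{A small correction to your closing remark.} On $\mathbb{R}^N\setminus\{0\}$ one has $\Delta K=K$ pointwise, because $(-\Delta+1)K=0$ away from the origin. So apart from the Dirac mass, $\Delta K$ is the absolutely integrable function $K$, and no principal value is involved. Principal values arise only for the individual $\partial_i\partial_jK$, whose non-integrable singular parts cancel when you take the trace.
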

We also recall the classical Sobolev inequality (see e.g. Adams--Fournier~\cite[Theorem~4.31]{AF2003book}):
\begin{Proposition}
\label{prop:Sobolev}
	Let $N\ge2$ and $1\le p<N$.
	Then there exists $C=C(N,p)>0$ such that
	\[
		\|f\|_{(\frac{1}{p}-\frac{1}{N})^{-1}}\le C\|\nabla f\|_{p}
	\]
	holds for $f\in \mathcal{S}$.
\end{Proposition}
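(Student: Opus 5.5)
The plan is the classical two-step argument: first the endpoint $p=1$ by the Gagliardo--Nirenberg slicing method, then the range $1<p<N$ by applying the endpoint inequality to a suitable power of $|f|$. Since $f\in\mathcal{S}$, every integral below is finite and all manipulations are justified.

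\emph{Step 1: $p=1$.} Fix $f\in\mathcal{S}$ and $x\in\mathbb{R}^N$. For each $i$, integrating $\partial_i f$ along the $i$-th coordinate line gives
\[
|f(x)|\le\int_{\mathbb{R}}|\partial_i f(x_1,\dots,y_i,\dots,x_N)|\,\mathrm{d}y_i=:g_i(\hat x_i),
\]
where $g_i$ does not depend on $x_i$. Hence $|f(x)|^{\frac{N}{N-1}}\le\prod_{i=1}^N g_i(\hat x_i)^{\frac{1}{N-1}}$. I then integrate successively in $x_1,\dots,x_N$. At each step exactly one factor is independent of the current variable, and I apply the generalized H\"older inequality with $N-1$ exponents all equal to $N-1$ to the remaining factors (equivalently, I invoke the Loomis--Whitney inequality). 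The result is
\[
\|f\|_{\frac{N}{N-1}}^{\frac{N}{N-1}}\le\prod_{i=1}^N\|\partial_i f\|_1^{\frac{1}{N-1}},
\]
so $\|f\|_{\frac{N}{N-1}}\le\prod_i\|\partial_i f\|_1^{1/N}\le\|\nabla f\|_1$. This is the case $p=1$ of Proposition~\ref{prop:Sobolev}, and it uses $N\ge2$. The main obstacle is the bookkeeping of the iterated H\"older step, which I would organize by induction on the number of variables already integrated out.

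\emph{Step 2: $1<p<N$.} Set $p^*\coloneqq(\frac1p-\frac1N)^{-1}$ and $\gamma\coloneqq\frac{p(N-1)}{N-p}>1$. The function $|f|^\gamma$ is $C^1$, decays rapidly, and satisfies $|\nabla|f|^\gamma|\le\gamma|f|^{\gamma-1}|\nabla f|$. Step 1 extends to such functions, either by approximation or because the slicing argument only used absolute continuity on lines. Applying it together with H\"older's inequality with exponents $p'$ and $p$ gives
\[
\||f|^\gamma\|_{\frac{N}{N-1}}\le\gamma\int|f|^{\gamma-1}|\nabla f|\,\mathrm{d}x\le\gamma\,\|f\|_{(\gamma-1)p'}^{\gamma-1}\|\nabla f\|_p.
\]
With this choice of $\gamma$ one checks that $\gamma\frac{N}{N-1}=(\gamma-1)p'=p^*$, so both sides involve the same norm. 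The inequality therefore reads $\|f\|_{p^*}^{\gamma}\le\gamma\|f\|_{p^*}^{\gamma-1}\|\nabla f\|_p$. Since $\|f\|_{p^*}<\infty$, dividing by $\|f\|_{p^*}^{\gamma-1}$ (the claim is trivial if $f\equiv0$) yields
\[
\|f\|_{p^*}\le\gamma\|\nabla f\|_p,
\]
which is the inequality of Proposition~\ref{prop:Sobolev} with $C=\frac{p(N-1)}{N-p}$.
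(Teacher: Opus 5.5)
The paper does not prove this proposition; it only quotes the classical Sobolev (Gagliardo--Nirenberg--Sobolev) inequality from Adams--Fournier. Your argument is the standard textbook proof of that result, and it is correct: the Loomis--Whitney/iterated H\"older bound handles $p=1$, and applying it to $|f|^{\gamma}$ with $\gamma=\frac{p(N-1)}{N-p}$ handles $1<p<N$, using the identities $\gamma\frac{N}{N-1}=(\gamma-1)p'=p^*$ and the $C^1$ regularity of $|f|^{\gamma}$ for $\gamma>1$.
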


%============================================================================%
\subsection{Bilinear estimate}
\label{subsec:bilinear}
%============================================================================%
In this subsection, we introduce a bilinear estimate associated with problem~\eqref{eq:KS_A}.
Set
\[
	X_{a, T}
	\coloneqq
	\left\{
		f
		\colon
		\mathbb{R}^N\times(0, T)\to \mathbb{R}
		\, \middle| \, 
		f, \nabla f\in C((0, T); \mathcal{L}^a)
		\ \text{and}\ 
		\norm{f}_{a,T}
		<
		\infty
	\right\}
\]
for $a\in[N/2, \infty]$ and $T\in(0, \infty]$, where
\[
	\norm{f}_{a, t}
	\coloneqq 
	\sum_{|\alpha|\le 1}
	\sup_{s\in(0, t)}
	\Short{s}^{\frac{N}{2}(\frac{2}{N}-\frac{1}{a})+\frac{|\alpha|}{2}}
	\Long{s}^{\frac{N}{4}(\frac{2}{N}-\frac{1}{a})+\frac{|\alpha|}{4}}
	\|
		\partial_x^\alpha f(s)
	\|_a.
\]
We also define two bilinear terms as
\begin{alignat*}{1}
	&
	\Theta_1[f,g](t)
	\coloneqq
	\int^t_{\frac{t}{2}}\nabla \cdot[F(t-s)\ast\left(f(s)\nabla K\ast g(s)\right)]\,\mathrm{d}s,\\
	&
	\Theta_2[f,g](t)
	\coloneqq
	\int^t_{0}\nabla \cdot[F(t-s)\ast\left(f(s)\nabla K\ast g(s)\right)]\,\mathrm{d}s.
\end{alignat*}
The proof of Theorem~\ref{thm:main} is based on the following estimates:

\begin{Proposition}
\label{prop:bilinear}
	Let $N\ge3$ and $a, b, c\in[N/2, \infty]$ satisfy
	\begin{equation}
	\label{eq:condition}
	\tag{C-1}
		\dfrac{1}{N}+\dfrac{1}{c}\le \dfrac{1}{a}+\dfrac{1}{b}<\dfrac{2}{N}+\dfrac{1}{c},
		\quad
		0\le \dfrac{1}{a}\le \dfrac{2}{N},
		\quad
		\dfrac{1}{N}<\dfrac{1}{b}\le \dfrac{2}{N}.
	\end{equation}
	Then there exists $C=C(N, a, b, c)>0$ such that
	$\Theta_1[f,g]\in X_{c, T}$ with 
	\[
		\norm{\Theta_1[f,g]}_{c, T}
		\le C
		\left(\norm{f}_{a, T}+\norm{f}_{b, T}\right)
		\left(\norm{g}_{a, T}+\norm{g}_{b, T}\right)
	\]
	for $T\in(0, \infty]$ and $f, g\in X_{a,T}\cap X_{b,T}$.
	In addition, if
	\begin{equation}
	\tag{C-2}
	\label{eq:additional}
		\dfrac{1}{a}+\dfrac{1}{b}>\dfrac{3}{N},
	\end{equation}
	then there exists $C=C(N,a,b,c)>0$ such that
	$\Theta_2[f,g]\in X_{c, T}$ with 
	\[
		\norm{\Theta_2[f,g]}_{c, T}
		\le C
		\left(\norm{f}_{a, T}+\norm{f}_{b, T}\right)
		\left(\norm{g}_{a, T}+\norm{g}_{b, T}\right)
	\]
	for $T\in(0, \infty]$ and $f, g\in X_{a,T}\cap X_{b,T}$.
\end{Proposition}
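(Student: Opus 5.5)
The plan is to estimate $\partial_x^\alpha\Theta_i[f,g](t)$ in $L^c$ for $|\alpha|\le1$ by Duhamel-type bounds, using Proposition~\ref{prop:semigroup} for the kernel. For the nonlinearity I would use H\"older together with Propositions~\ref{prop:HLS}, \ref{prop:CZ}, \ref{prop:Sobolev}. The membership $\Theta_i[f,g],\nabla\Theta_i[f,g]\in C((0,T);\mathcal{L}^c)$ then follows from Corollary~\ref{cor:semigroup}-(2) and dominated convergence once the integrals are shown to converge absolutely. The basic product estimate is, with $\frac1q\coloneqq\frac1a+\frac1b-\frac1N$,
\[
\|f(s)\nabla K\ast g(s)\|_{q}\le \|f(s)\|_a\|\nabla K\ast g(s)\|_{(\frac1b-\frac1N)^{-1}}\le C\|f(s)\|_a\|g(s)\|_b ,
\]
which uses $\frac1N<\frac1b\le\frac2N$ and Proposition~\ref{prop:HLS}. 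Condition \eqref{eq:condition} gives $q\le c$, and $N\ge3$ gives $q\ge1$.

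\textbf{Short times.} Here I would put $\nabla\cdot\partial_x^\alpha$ on $F(t-s)$ and apply \eqref{lqlp_est;F} from $L^q$ to $L^c$. The right inequality in \eqref{eq:condition} makes the kernel singularity $(t-s)^{-\frac N2(\frac1q-\frac1c)-\frac12}$ integrable at $s=t$ when $|\alpha|=0$. When $|\alpha|=1$ this exponent may reach $1$. In that case I would keep only $\nabla\cdot$ on the kernel and let $\partial_x^\alpha$ fall on the product:
\[
\partial(f\nabla K\ast g)=\partial f\,\nabla K\ast g+f\,\nabla K\ast\partial g .
\]
This uses the gradient part of $\norm{\cdot}_{a,T},\norm{\cdot}_{b,T}$, and applying HLS to $\partial g$ in $L^b$ is what produces the mixed norms $\norm{f}_a+\norm{f}_b$. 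On $(t/2,t)$ the weights $\Short{s},\Long{s}$ are comparable to $\Short{t},\Long{t}$. The $\Short{\cdot}$-exponents then add up exactly to $\frac N2(\frac2N-\frac1c)+\frac{|\alpha|}2$, which I checked.

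\textbf{Long times (main obstacle).} In the regime $t\gg1$ the naive count fails. With the full kernel $F$ the $\Long{\cdot}$-bookkeeping gives $t^{N/(4c)}$, whereas $t^{-\frac12+\frac N{4c}}$ is needed, a loss of exactly $t^{1/2}$. This reflects that the low-frequency part behaves like \eqref{eq;LS}: $\nabla K$ acts like $\nabla$ at low frequency, so the nonlinearity carries two derivatives, each worth $t^{-1/4}$. My plan is therefore to split $F=F_1+F_2$.
\begin{itemize}
\item For $F_1$ I would use Proposition~\ref{prop:semigroup}-(2) with $j=1$, where $|\alpha|\ge1$ holds because of the divergence. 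This kernel has heat-like decay $t^{-\frac N2(\cdot)-\frac{|\alpha|}{2}}$, which is better than required for $t\gg1$, and the above bookkeeping closes.
\item For $F_2$ I would write $\nabla K\ast g=K\ast\nabla g$. I would then estimate $\|f\,K\ast\nabla g\|_{q'}$ using the gradient norm of $g$, which gains $\Long{s}^{-1/4}$, and Proposition~\ref{prop:Sobolev} (or HLS) to adjust the integrability index. The remaining $\Long{\cdot}^{-1/4}$ would come from the divergence on $F_2$ via \eqref{lqlp_est;Fj} with $j=2$, whose low-frequency rate is $t^{-\frac N4(\frac1{q'}-\frac1c)-\frac{1+|\alpha|}4}$.
\end{itemize}
I expect the delicate point to be choosing the intermediate exponent $q'$ so that all of the following hold at once:
\begin{itemize}
\item the $s$-integral converges at $s=t$;
\item the Sobolev/HLS index constraints hold;
\item the $\Long{t}$-powers match exactly.
\end{itemize}
If one derivative proves insufficient, I would try moving the divergence onto the product as well for the $F_2$ piece.

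\textbf{The term $\Theta_2$.} Since $\Theta_2=\Theta_1+\int_0^{t/2}$, it remains to bound the integral over $(0,t/2)$. There $t-s\sim t$, so all derivatives can be placed on the kernel (again split as $F_1+F_2$ for large $t$), and the issue is integrability of the $f,g$ weights at $s=0$. The product $\|f(s)\|_a\|g(s)\|_b$ behaves like $s^{-2+\frac N2(\frac1a+\frac1b)}$ near $s=0$, and this is integrable precisely when $\frac1a+\frac1b>\frac2N$. Condition \eqref{eq:additional} is the stronger requirement, which I expect to appear when the large-$s$ $\Long{s}$-powers inside $(0,t/2)$ must also be integrated and yield the correct $\Long{t}$ power. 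This is what makes the estimate hold uniformly in $T\in(0,\infty]$.
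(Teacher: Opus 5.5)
You have the short-time argument right (full kernel $F$, with $\partial_x^\alpha$ moved onto the product when $|\alpha|=1$), and your diagnosis of the $t^{1/2}$ loss at large times is correct. There is, however, a genuine gap: the claim that the high-frequency piece $F_1$ needs nothing new for $t\gg1$ is false.

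Take $|\alpha|=0$, $t\gg1$, and the product bound you use at short times, $\|f(s)\nabla K\ast g(s)\|_q\le C\|f(s)\|_a\|g(s)\|_b\lesssim s^{-1+\frac N4(\frac1a+\frac1b)}$ for $s\in(t/2,t)$. Proposition~\ref{prop:semigroup}-(2) with $j=1$ gives the kernel factor $(t-s)^{-\frac N2 D}$, where $D\coloneqq\frac1a+\frac1b-\frac1c$. The weight is $\Long{t}^{\frac N4(\frac2N-\frac1c)}\sim t^{\frac12-\frac N{4c}}$. Up to the factor $\norm{f}_{a,T}\norm{g}_{b,T}$ you therefore only get
\[
t^{\frac12-\frac{N}{4c}}\Bigl\|\int_{t/2}^{t}\nabla\cdot\bigl[F_1(t-s)\ast\bigl(f(s)\nabla K\ast g(s)\bigr)\bigr]\,\mathrm{d}s\Bigr\|_c
\lesssim t^{\frac12-\frac{N}{4c}}\,t^{-1+\frac N4(\frac1a+\frac1b)}\int_0^{t/2}\tau^{-\frac N2 D}\,\mathrm{d}\tau
\sim t^{\frac12-\frac N4 D}.
\]
By \eqref{eq:condition}, $D\in[\frac1N,\frac2N)$, so this exponent lies in $(0,\frac14]$ and the weighted norm is unbounded.

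The polynomial rate of $F_1$ beats that of $F_2$ only for large $t-s$. Integrated over $t-s\in(0,t/2)$ it still grows, and it cannot offset the slow decay of $\|f\|_a\|g\|_b$. The loss sits in the product estimate, not the kernel: HLS treats $\nabla K$ as order $-1$, while at low frequency it acts like $\nabla$. So splitting $F$ alone does not remove it for $F_1$. The same exponent $\frac12-\frac N4D$ appears in two further places:
\begin{itemize}
\item for $|\alpha|=1$ when only $\partial_x^\alpha$ is moved onto the product, since one gradient gains $\frac14$ but the weight also carries an extra $\frac14$;
\item for $\Theta_2$ on $(1,t/2)$ with $|\alpha|=0$.
\end{itemize}

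What is missing is that for $s>1$ the nonlinearity must be estimated in gradient norms for both pieces $j=1,2$, as the paper does. With your $q$,
\[
\|f(s)\nabla K\ast g(s)\|_q\le\|f(s)\|_{(\frac1b-\frac1N)^{-1}}\|K\ast\nabla g(s)\|_a\le C\|\nabla f(s)\|_b\|\nabla g(s)\|_a\lesssim s^{-\frac N4(\frac4N-\frac1a-\frac1b)-\frac12},
\]
using Proposition~\ref{prop:Sobolev} on $f$ and the $L^a$-boundedness of $K\ast$. This gains the full $s^{-1/2}$. It closes for $F_1$ with $|\alpha|=0$ (net exponent $-\frac N4 D$) and for $F_2$ with $|\alpha|\le1$ (net exponent $0$). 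It is also where the mixed products $\norm{f}_{b,T}\norm{g}_{a,T}$ come from.

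For $(j,|\alpha|)=(1,1)$ the paper puts the whole divergence on the product and bounds
\[
\|\nabla\cdot(f\nabla K\ast g)\|_q\le C\bigl(\|\nabla f\|_b\|\nabla g\|_a+\|\nabla f\|_a\|\nabla g\|_b\bigr),
\]
using $\Delta K\ast g=\nabla K\ast\nabla g$ and $\|\nabla K\ast g\|_{(\frac1b-\frac1N)^{-1}}\le C\|g\|_{(\frac1b-\frac1N)^{-1}}\le C\|\nabla g\|_b$. Alternatively, you could prove exponential decay of $F_1$ from $h(\xi)\ge\frac14(1+|\xi|^2)$ on the support of $\Phi_1$, but that is not contained in Proposition~\ref{prop:semigroup}.

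Your $F_2$ bookkeeping also misplaces a quarter. $F_2$ has the same large-time rate as $F$, so the divergence on the kernel was already counted in your naive $t^{1/2}$ loss. The second quarter must come from one of two sources:
\begin{itemize}
\item a second gradient, via the Sobolev step above;
\item giving up the HLS gain in integrability, placing the product in $L^{(\frac1a+\frac1b)^{-1}}$ so the kernel supplies the extra $\frac14$.
\end{itemize}
The second option is not admissible when $N=3$ and $\frac1a+\frac1b>1$, which \eqref{eq:condition} allows.

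Finally, the full strength of \eqref{eq:additional} is used in the paper only where the divergence is put on the product for $F_1$ near $s=0$, to integrate $s^{\frac N2(\frac1a+\frac1b-\frac3N)-1}$. Elsewhere, including the large-$s$ pieces, only $\frac1a+\frac1b>\frac2N$ is needed.
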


\begin{figure}[H]
\centering
\tikzset{
	dashed/.style={
	dash pattern=on 2pt off 1pt
	}
} 
\begin{subfigure}{0.45\linewidth}
\centering
\begin{tikzpicture}
	\fill[fill=cyan!30] (2,4) -- (2,8) -- (4,8) -- cycle;
	\fill[fill=cyan!80] (2.25,4.5) -- (2.25,5) -- (2,5) -- (2,4.5) -- cycle;
	\fill[fill=cyan!80] (2.25,6.25) -- (2.25,6) -- (2,6) -- (2,6.25) -- cycle;
	\draw[->,>=stealth,thick] (-1.25,0)--(5,0) 
	node [above, font=\small]{$\frac{1}{a}$};
	\draw[->,>=stealth, dashed,thick] (-1,0)--(-1,8.5) 
	node [right, font=\small]{$\frac{1}{c}$};
	\draw[thick, dashed, domain=-1:2] plot(\x, 4);
	\draw[thick, dashed, domain=-1:2] plot(\x, 8);
	\draw (-1,0) node [below, font=\tiny]{$\frac{3}{4N}$};
	\draw (-1.25,0) node [left, font=\tiny]{$0$};
	\draw (-1,4) node [left, font=\tiny]{$\frac{1}{N}$};
	\draw (-1,6) node [left, font=\tiny]{$\frac{3}{2N}$};
	\draw (-1,6.25) -- (-0.5,6.75) node[above, font=\tiny] {$\frac{25}{16N}$};
	\draw (-1,8) node [left, font=\tiny]{$\frac{2}{N}$};
	\draw (-1,4.5) node [left, font=\tiny]{$\frac{9}{8N}$};
	\draw (-1,5) node [left, font=\tiny]{$\frac{5}{4N}$};
	\draw (2,0) node [below, font=\tiny]{$\frac{3}{2N}$};
	\draw (2.25,0) -- (2.75,0.25) node[right, font=\tiny] {$\frac{25}{16N}$};
	\draw (4,0) node [below, font=\tiny]{$\frac{2}{N}$};
	\draw[thick, dashed, domain=0:8.25] plot (2,\x);
	\draw[thick, dashed, domain=0:8.25] plot (2.25,\x);
	\draw[thick, dashed, domain=0:8.25] plot (4,\x);
	\draw[thick, dashed, domain=-1:4.5] plot (\x,4.5);
	\draw[thick, dashed, domain=-1:4.5] plot (\x,5);
	\draw[thick, dashed, domain=-1:4.5] plot (\x,6);
	\draw[thick, dashed, domain=-1:4.5] plot (\x,6.25);
	\draw[thick, domain=-1.25:2.2] plot({\x}, {2*\x+4})
	node[above, font=\tiny]{$\frac{1}{c}=\frac{2}{a}-\frac{1}{N}$};
	\draw[thick, domain=-0.25:4.2] plot({\x}, {2*\x})
	node[above, font=\tiny]{$\frac{1}{c}=\frac{2}{a}-\frac{2}{N}$};
	\draw[ultra thick, color=white, domain=4:8] plot (2,\x);
	\draw[ultra thick, dashed, domain=4:8] plot (2,\x);
	\draw[ultra thick, color=white, domain=2:4] plot ({\x},{2*\x});
	\draw[ultra thick, dashed, domain=2:4] plot ({\x},{2*\x});
	\draw[ultra thick, domain=2:4] plot (\x,8);
	\draw[line width=1pt, fill=white] (4,8) circle (2pt);
	\draw[line width=1pt, fill=black] (2,8) circle (2pt);
	\draw[line width=1pt, fill=white] (2,4) circle (2pt);
	\draw[thick] (2.125,4.75) -- (2.625,3.75) node[draw, fill=white, inner sep=2pt, below, font=\tiny] {Range of $(1/q, 1/q_1)$};
	\draw[thick] (2.125,6.125) -- (2.875,5.625) node[draw, fill=white, inner sep=2pt, below, font=\tiny] {Range of $(1/q, 1/q)$};
\end{tikzpicture}
\caption{\footnotesize Range of $(1/a,1/c)$ where \eqref{eq:condition}, \eqref{eq:additional}, and $a=b$ hold}
\end{subfigure}
\hfill
\begin{subfigure}{0.45\linewidth}
\centering
\begin{tikzpicture}
	\fill[fill=cyan!30] (0,0) -- (0,4) -- (2,8) -- (4,8) -- cycle;
	\fill[fill=cyan!80] (0.5,2) -- (0.5,3) -- (1,3) -- (1,2) -- cycle;
	\draw[->,>=stealth,thick] (-1.25,0)--(5,0) 
	node [above, font=\small]{$\frac{1}{a}$};
	\draw[->,>=stealth, dashed,thick] (-1,0)--(-1,8.5) 
	node [right, font=\small]{$\frac{1}{c}$};
	\draw[thick, dashed, domain=-1:0] plot(\x, 4);
	\draw[thick, dashed, domain=-1:2] plot(\x, 8);
	\draw (-1,0) node [below, font=\tiny]{$\frac{3}{4N}$};
	\draw (0,0) node [below, font=\tiny]{$\frac{1}{N}$};
	\draw (-1.25,0) node [left, font=\tiny]{$0$};
	\draw (-1,2) node [left, font=\tiny]{$\frac{1}{2N}$};
	\draw (-1,3) node [left, font=\tiny]{$\frac{3}{4N}$};
	\draw (-1,4) node [left, font=\tiny]{$\frac{1}{N}$};
	\draw (-1,8) node [left, font=\tiny]{$\frac{2}{N}$};
	\draw (0.5,0) node [below, font=\tiny]{$\frac{9}{8N}$};
	\draw (1,0) node [below, font=\tiny]{$\frac{5}{4N}$};
	\draw (4,0) node [below, font=\tiny]{$\frac{2}{N}$};
	\draw[thick, dashed, domain=0:8.25] plot (4,\x);
	\draw[thick, dashed, domain=0:8.25] plot (1,\x);
	\draw[thick, dashed, domain=0:8.25] plot (0.5,\x);
	\draw[thick, dashed, domain=-1:4.5] plot (\x,2);
	\draw[thick, dashed, domain=-1:4.5] plot (\x,3);
	\draw[thick, domain=-1.25:2.2] plot({\x}, {2*\x+4})
	node[above, font=\tiny]{$\frac{1}{c}=\frac{2}{a}-\frac{1}{N}$};
	\draw[thick, domain=-0.25:4.2] plot({\x}, {2*\x})
	node[above, font=\tiny]{$\frac{1}{c}=\frac{2}{a}-\frac{2}{N}$};
	\draw[ultra thick, color=white, domain=0:4] plot (0,\x);
	\draw[ultra thick, dashed, domain=0:4] plot (0,\x);
	\draw[ultra thick, color=white, domain=0:4] plot ({\x},{2*\x});
	\draw[ultra thick, dashed, domain=0:4] plot ({\x},{2*\x});
	\draw[ultra thick, domain=2:4] plot (\x,8);
	\draw[ultra thick, domain=0:2] plot ({\x},{2*\x+4});
	\draw[line width=1pt, fill=white] (0,0) circle (2pt);
	\draw[line width=1pt, fill=white] (4,8) circle (2pt);
	\draw[line width=1pt, fill=black] (2,8) circle (2pt);
	\draw[line width=1pt, fill=white] (0,4) circle (2pt);
	\draw[thick] (0.75,2.5) -- (1.25,1.5) node[draw, fill=white, inner sep=2pt, below, font=\tiny] {Range of $(1/q_1, 1/q_2)$};
\end{tikzpicture}
\caption{\footnotesize Range of $(1/a,1/c)$ where \eqref{eq:condition} and $a=b$ hold}
\end{subfigure}
\hfill
\begin{subfigure}{0.45\linewidth}
\centering
\begin{tikzpicture}
	\fill[fill=cyan!30] (0,4) -- (4,4) -- (0,8) -- cycle;
	\fill[fill=cyan!80] (2,4.5) -- (2,5) -- (3,5) -- (3,4.5) -- cycle;
	\draw[->,>=stealth,thick] (-0.5,0)--(5,0) 
	node [above, font=\small]{$\frac{1}{a}$};
	\draw[->,>=stealth,thick] (0,-0.5)--(0,8.5) 
	node [right, font=\small]{$\frac{1}{b}$};
	\draw (0,0) node [below left, font=\small]{$O$};
	\draw (0,4) node [left, font=\tiny]{$\frac{1}{N}$};
	\draw (0,8) node [left, font=\tiny]{$\frac{2}{N}$};
	\draw (4,0) node [below, font=\tiny]{$\frac{1}{N}$};
	\draw (2,0) node [below, font=\tiny]{$\frac{1}{2N}$};
	\draw (3,0) node [below, font=\tiny]{$\frac{3}{4N}$};
	\draw (0, 4.5) node [left, font=\tiny]{$\frac{9}{8N}$};
	\draw (0, 5) node [left, font=\tiny]{$\frac{5}{4N}$};
	\draw[thick, dashed, domain=0:4] plot (4,\x);
	\draw[thick, domain=-0.2:4.2] plot({\x}, {-\x+8})
	node[right, font=\tiny]{$\frac{1}{b}=-\frac{1}{a}+\frac{2}{N}$};
	\draw[thick, dashed, domain=0:4.5] plot (\x,5);
	\draw[thick, dashed, domain=0:4.5] plot (\x,4.5);
	\draw[thick, dashed, domain=0:8.25] plot (2,\x);
	\draw[thick, dashed, domain=0:8.25] plot (3,\x);
	\draw[ultra thick, color=white, domain=0:4] plot (\x,4);
	\draw[ultra thick, dashed, domain=0:4] plot (\x,4);
	\draw[ultra thick, color=white, domain=0:4] plot ({\x},{-\x+8});
	\draw[ultra thick, dashed, domain=0:4] plot ({\x},{-\x+8});
	\draw[ultra thick, domain=4:8] plot (0,\x);
	\draw[line width=1pt, fill=white] (0,4) circle (2pt);
	\draw[line width=1pt, fill=white] (0,8) circle (2pt);
	\draw[line width=1pt, fill=white] (4,4) circle (2pt);
	\draw[thick] (2.5,4.75) -- (3,3.75) node[draw, fill=white, inner sep=2pt, below, font=\tiny] {Range of $(1/q_2, 1/q_1)$};
\end{tikzpicture}
\caption{\footnotesize Range of $(1/a,1/b)$ where \eqref{eq:condition} and $c=\infty$ hold}
\end{subfigure}
\caption{Typical range of assumptions in Proposition~\ref{prop:bilinear} used in the proof of Theorem~\ref{thm:main}}
\label{fig:bilinear}
\end{figure}
%------------------------------------

\begin{proof}
	We first recall several estimates which follow from \eqref{eq:condition}:
	\begin{alignat}{1}
	\label{eq:bilinear_est1-1}
		&0\le \dfrac{1}{c}\le \dfrac{1}{a}+\dfrac{1}{b}-\dfrac{1}{N}\le 1,\\
	\label{eq:bilinear_est1-3}
		&0\le \dfrac{1}{a}\le 1,\\
	\label{eq:bilinear_est1-2}
		&0<\dfrac{1}{b}-\dfrac{1}{N}<\dfrac{1}{b}<1,\\
	\label{eq:bilinear_est2}
		&\dfrac{2}{N}+\dfrac{1}{c}-\dfrac{1}{a}-\dfrac{1}{b}>0.	
	\end{alignat}
	The proof is divided into five steps.
	%--------------------------
	\\[5pt]
	\underline{\bf Step 1.}
	%--------------------------
	Fix $j \in \{ 1, 2 \}$ and $\alpha \in \mathbb{Z}^N_+$ satisfying $|\alpha|\le 1$ and $(j, |\alpha|)\neq(1,1)$ arbitrarily.
	By \eqref{eq:bilinear_est1-2}, Proposition~\ref{prop:HLS}, and the H\"{o}lder inequality we have
	\begin{alignat*}{1}
		\left\|
			f(s)\nabla K\ast g(s)
		\right\|_{(\frac{1}{a}+\frac{1}{b}-\frac{1}{N})^{-1}}
		\le \, &
		\|f(s)\|_{a}
		\|\nabla K\ast g(s)\|_{(\frac{1}{b}-\frac{1}{N})^{-1}}
		\\
		\le \, &
		C
		\|f(s)\|_{a}\|g(s)\|_{b}\\
		\le \, &
		C
		s^{-\frac{N}{2}(\frac{4}{N}-\frac{1}{a}-\frac{1}{b})}
		\norm{f}_{a, T}\norm{g}_{b, T}
	\end{alignat*}
	for $s\in(0, \min\{2, T\})$.
	Combining this with \eqref{lqlp_est;Fj} and \eqref{eq:bilinear_est1-1}, 
	we obtain
	\begin{equation}
	\label{eq:bilinear_est5}
	\begin{alignedat}{1}
		&\left\|
				\partial^\alpha_x \nabla\cdot
				[
					F_j(t-s)\ast 
					\left(
						f(s)\nabla K\ast g(s)
					\right)
				]
			\right\|_{c}\\
		\le \, &
		C
		(t-s)^{-\frac{N}{2j}
		\left(
			(
				\frac{1}{a}+\frac{1}{b}-\frac{1}{N}
			)
			-
			\frac{1}{c}
		\right)
		-\frac{|\alpha|+1}{2j}}
		s^{-\frac{N}{2}(\frac{4}{N}-\frac{1}{a}-\frac{1}{b})}
		\norm{f}_{a, T}\norm{g}_{b, T}\\
		= \, &
		C
		(t-s)^{\frac{N}{2j}(\frac{2j-|\alpha|}{N}+\frac{1}{c}-\frac{1}{a}-\frac{1}{b})-1}
		s^{\frac{N}{2}(\frac{1}{a}+\frac{1}{b}-\frac{2}{N})-1}
		\norm{f}_{a, T}\norm{g}_{b, T}
	\end{alignedat}
	\end{equation}
	for $t\in(0, T)$ and $s\in(0, \min\{2, t\}]$.
	Then it follows from \eqref{eq:bilinear_est2} that
	\begin{alignat*}{1}
		&
		\left\|
			\partial^\alpha_x
			\int^t_{\frac{t}{2}}
				\nabla \cdot
				[
					F_j(t-s)\ast
					\left(
						f(s)\nabla K\ast g(s)
					\right)
				]
			\,\mathrm{d}s
		\right\|_{c}\\
		\le \, &
		C
		\norm{f}_{a, T}\norm{g}_{b, T}
		\int^t_{\frac{t}{2}}
			(t-s)^{\frac{N}{2j}(\frac{2j-|\alpha|}{N}+\frac{1}{c}-\frac{1}{a}-\frac{1}{b})-1}
			s^{\frac{N}{2}(\frac{1}{a}+\frac{1}{b}-\frac{2}{N})-1}
		\,\mathrm{d}s\\
		\le \, &
		C
		\norm{f}_{a, T}\norm{g}_{b, T}
		t^{\frac{N}{2j}(\frac{2j-|\alpha|}{N}+\frac{1}{c}-\frac{1}{a}-\frac{1}{b})+\frac{N}{2}(\frac{1}{a}+\frac{1}{b}-\frac{2}{N})-1}
	\end{alignat*}
	for $t\in(0, \min\{2, T\})$.
	Since
	\begin{equation}
	\label{eq:exponent-1}
	\begin{alignedat}{1}
		&
		\dfrac{N}{2}
		\left(
			\dfrac{2}{N}-\dfrac{1}{c}
		\right)
		+\dfrac{|\alpha|}{2}
		+\dfrac{N}{2j}
		\left(
			\dfrac{2j-|\alpha|}{N}+\dfrac{1}{c}-\dfrac{1}{a}-\dfrac{1}{b}
		\right)
		+\dfrac{N}{2}
		\left(
			\dfrac{1}{a}+\dfrac{1}{b}-\dfrac{2}{N}
		\right)
		-1\\
		= \, &
		\dfrac{N(j-1)}{2j}
		\left(
			\dfrac{1}{a}+\dfrac{1}{b}+\dfrac{|\alpha|}{N}-\dfrac{1}{c}
		\right),
	\end{alignedat}
	\end{equation}
	we deduce from \eqref{eq:bilinear_est1-1} that
	\begin{equation}
	\label{eq:bilinear_est8}
		t^{\frac{N}{2}(\frac{2}{N}-\frac{1}{c})+\frac{|\alpha|}{2}}
		\left\|
			\partial^\alpha_x
			\int^t_{\frac{t}{2}}
				\nabla \cdot
				[
					F_j(t-s)\ast
					\left(
						f(s)\nabla K\ast g(s)
					\right)
				]\,\mathrm{d}s
		\right\|_{c}
		\le C\norm{f}_{a, T}\norm{g}_{b, T}
	\end{equation}
	for $t\in(0, \min\{2, T\})$, $j\in\{1,2\}$, and $\alpha\in\mathbb{Z}^N_{+}$ satisfying $|\alpha|\le1$ and $(j, |\alpha|)\neq(1,1)$.
	
	We next consider the case where \eqref{eq:condition} and \eqref{eq:additional} hold.
	By \eqref{eq:additional}, \eqref{eq:bilinear_est1-1}, \eqref{eq:bilinear_est2} and \eqref{eq:bilinear_est5} 
	we have
	\begin{alignat*}{1}
		&
		\left\|
			\partial^\alpha_x
			\int^{t}_{0}
				\nabla \cdot
				[
					F_j(t-s)\ast
					\left(
						f(s)\nabla K\ast g(s)
					\right)
				]\,\mathrm{d}s
		\right\|_{c}\\
		\le \, &
		C
		\norm{f}_{a, T}\norm{g}_{b, T}
			\int^{t}_{0}
				(t-s)^{\frac{N}{2j}(\frac{2j-|\alpha|}{N}+\frac{1}{c}-\frac{1}{a}-\frac{1}{b})-1}
				s^{\frac{N}{2}(\frac{1}{a}+\frac{1}{b}-\frac{2}{N})-1}
			\,\mathrm{d}s\\
		\le \, &
		C
		\norm{f}_{a, T}\norm{g}_{b, T}
		t^{\frac{N}{2j}(\frac{2j-|\alpha|}{N}+\frac{1}{c}-\frac{1}{a}-\frac{1}{b})+\frac{N}{2}(\frac{1}{a}+\frac{1}{b}-\frac{2}{N})-1}
	\end{alignat*}
	for $t\in(0, \min\{2, T\})$.
	It follows from this and \eqref{eq:exponent-1} that
	\begin{equation}
	\label{eq:bilinear_est9}
		t^{\frac{N}{2}(\frac{2}{N}-\frac{1}{c})+\frac{|\alpha|}{2}}
		\left\|
			\partial^\alpha_x
			\int^t_{0}
				\nabla \cdot
				[
					F_j(t-s)\ast
					\left(
						f(s)\nabla K\ast g(s)
					\right)
				]
			\,\mathrm{d}s
		\right\|_{c}
		\le 
		C
		\norm{f}_{a, T}\norm{g}_{b, T}
	\end{equation}
	for $t\in(0, \min\{2, T\})$, $j\in\{1,2\}$, and $\alpha\in\mathbb{Z}^N_{+}$ satisfying $|\alpha|\le1$ and $(j, |\alpha|)\neq(1,1)$. 
	Similarly, since
	\begin{alignat*}{1}
		&
		\left\|
			\partial^\alpha_x
			\int^{1}_{0}
				\nabla \cdot
				[
					F_j(t-s)\ast
					\left(
						f(s)\nabla K\ast g(s)
					\right)
				]
			\,\mathrm{d}s
		\right\|_{c}\\
		\le \, &
		C
		\norm{f}_{a, T}\norm{g}_{b, T}
		\int^{1}_{0}
			(t-s)^{\frac{N}{2j}(\frac{2j-|\alpha|}{N}+\frac{1}{c}-\frac{1}{a}-\frac{1}{b})-1}
			s^{\frac{N}{2}(\frac{1}{a}+\frac{1}{b}-\frac{2}{N})-1}
		\,\mathrm{d}s\\
		\le \, &
		C
		\norm{f}_{a, T}\norm{g}_{b, T}
		t^{\frac{N}{2j}(\frac{2j-|\alpha|}{N}+\frac{1}{c}-\frac{1}{a}-\frac{1}{b})-1}
		\int^{1}_{0}
			s^{\frac{N}{2}(\frac{1}{a}+\frac{1}{b}-\frac{2}{N})-1}
		\,\mathrm{d}s\\
		\le \, &
		C
		\norm{f}_{a, T}\norm{g}_{b, T}
		t^{\frac{N}{2j}(\frac{2j-|\alpha|}{N}+\frac{1}{c}-\frac{1}{a}-\frac{1}{b})-1}
	\end{alignat*}
	for $t\in(\min\{2, T\}, T)$ and
	\begin{alignat*}{1}
		&
		\dfrac{N}{4}
		\left(
			\dfrac{2}{N}-\dfrac{1}{c}
		\right)
		+\dfrac{|\alpha|}{4}
		+\dfrac{N}{2j}
		\left(
			\dfrac{2j-|\alpha|}{N}+\dfrac{1}{c}-\dfrac{1}{a}-\dfrac{1}{b}
		\right)
		-1\\
		= \, &
		-\dfrac{N}{4j}
		\left(
			\dfrac{2}{a}+\dfrac{2}{b}
			-\dfrac{2j}{N}-\dfrac{2-j}{c}
		\right),
	\end{alignat*}
	we deduce from \eqref{eq:additional} that
	\begin{equation}
	\label{eq:bilinear_est10}
		t^{\frac{N}{4}(\frac{2}{N}-\frac{1}{c})+\frac{|\alpha|}{4}}
		\left\|
			\partial^\alpha_x
			\int^{1}_{0}
				\nabla \cdot
				[
					F_j(t-s)\ast
					\left(
						f(s)\nabla K\ast g(s)
					\right)
				]
			\,\mathrm{d}s
		\right\|_{c}
		\le 
		C
		\norm{f}_{a, T}\norm{g}_{b, T},
	\end{equation}
	for $t\in(\min\{2, T\}, T)$, $j\in\{1,2\}$, and $\alpha\in\mathbb{Z}^N_{+}$ satisfying $|\alpha|\le1$ and $(j, |\alpha|)\neq(1,1)$. 
	%--------------------------
	\\[5pt]
	\underline{\bf Step 2.}
	%--------------------------
	Fix arbitrary $j \in \{ 1, 2 \}$ and $\alpha \in \mathbb{Z}^N_+$ satisfying $|\alpha|\le 1$ and $(j, |\alpha|) \neq (1,1)$.
	By \eqref{eq:bilinear_est1-3}, Propositions~\ref{prop:HLS}, \ref{prop:Sobolev}, and the H\"{o}lder inequality we have
	\begin{alignat*}{1}
		\left\|
			f(s)\nabla K\ast g(s)
		\right\|_{(\frac{1}{a}+\frac{1}{b}-\frac{1}{N})^{-1}}
		\le \, &
		\|f(s)\|_{(\frac{1}{b}-\frac{1}{N})^{-1}}
		\|K\ast \nabla g(s)\|_{a}\\
		\le \, &
		C
		\|\nabla f(s)\|_{b}
		\|\nabla g(s)\|_{a}\\
		\le \, &
		C
		s^{-\frac{N}{4}(\frac{4}{N}-\frac{1}{a}-\frac{1}{b})-\frac{1}{2}}
		\norm{f}_{b, T}\norm{g}_{a, T}
	\end{alignat*}
	for $s\in(\min\{1,T\}, T)$.
	This, together with \eqref{lqlp_est;Fj} and \eqref{eq:bilinear_est1-1},
	implies that
	\begin{equation}
	\label{eq:bilinear_est11}
	\begin{alignedat}{1}
		&
		\left\|
			\partial^\alpha_x\nabla\cdot
			[
				F_j(t-s)\ast 
				\left(
					f(s)\nabla K\ast g(s)
				\right)
			]
		\right\|_{c}\\
		\le \, &
		C
		(t-s)^{-\frac{N}{2j}
		\left(
			(\frac{1}{a}+\frac{1}{b}-\frac{1}{N})-\frac{1}{c}
		\right)-\frac{|\alpha|+1}{2j}}
		s^{-\frac{N}{4}(\frac{4}{N}-\frac{1}{a}-\frac{1}{b})-\frac{1}{2}}
		\norm{f}_{b, T}\norm{g}_{a, T}\\
		= \, &
		C
		(t-s)^{\frac{N}{2j}(\frac{2j-|\alpha|}{N}+\frac{1}{c}-\frac{1}{a}-\frac{1}{b})-1}
		s^{\frac{N}{4}(\frac{1}{a}+\frac{1}{b}-\frac{2}{N})-1}
		\norm{f}_{b, T}\norm{g}_{a, T}
	\end{alignedat}
	\end{equation}
	for $t\in(0, T)$ and $s\in(\min\{1, t\}, t]$.
	Then it follows from \eqref{eq:bilinear_est2} that
	\begin{alignat*}{1}
		&
		\left\|
			\partial^\alpha_x
			\int^t_{\frac{t}{2}}
				\nabla \cdot
				[
					F_j(t-s)\ast
					\left(
						f(s)\nabla K\ast g(s)
					\right)
				]
			\,\mathrm{d}s
		\right\|_{c}\\
		\le \, &
		C
		\norm{f}_{b, T}\norm{g}_{a, T}
		\int^t_{\frac{t}{2}}
			(t-s)^{\frac{N}{2j}(\frac{2j-|\alpha|}{N}+\frac{1}{c}-\frac{1}{a}-\frac{1}{b})-1}
			s^{\frac{N}{4}(\frac{1}{a}+\frac{1}{b}-\frac{2}{N})-1}
		\,\mathrm{d}s\\
		\le \, &
		C
		\norm{f}_{b, T}\norm{g}_{a, T}
		t^{\frac{N}{2j}(\frac{2j-|\alpha|}{N}+\frac{1}{c}-\frac{1}{a}-\frac{1}{b})+\frac{N}{4}(\frac{1}{a}+\frac{1}{b}-\frac{2}{N})-1}
	\end{alignat*}
	for $t\in(\min\{2, T\}, T)$.
	Since
	\begin{equation}
	\label{eq:exponent-2}
	\begin{alignedat}{1}
		&
		\dfrac{N}{4}
		\left(
			\dfrac{2}{N}-\dfrac{1}{c}
		\right)
		+\dfrac{|\alpha|}{4}
		+\dfrac{N}{2j}
		\left(
			\dfrac{2j-|\alpha|}{N}+\dfrac{1}{c}-\dfrac{1}{a}-\dfrac{1}{b}
		\right)
		+\dfrac{N}{4}
		\left(
			\dfrac{1}{a}+\dfrac{1}{b}-\dfrac{2}{N}
		\right)
		-1\\
		= \, &
		-\dfrac{N(2-j)}{4j}
		\left(
			\dfrac{1}{a}+\dfrac{1}{b}-\dfrac{1}{c}
		\right),
	\end{alignedat}	
	\end{equation}
	we deduce from \eqref{eq:bilinear_est1-1} that
	\begin{equation}
	\label{eq:bilinear_est13}
	\begin{alignedat}{1}
		t^{\frac{N}{4}(\frac{2}{N}-\frac{1}{c})+\frac{|\alpha|}{4}}
		\left\|
			\partial^\alpha_x
			\int^t_{\frac{t}{2}}
				\nabla \cdot
				[
					F_j(t-s)\ast
					\left(
						f(s)\nabla K\ast g(s)
					\right)
				]
			\, \mathrm{d}s
		\right\|_{c}
		\le C\norm{f}_{b, T}\norm{g}_{a, T}
	\end{alignedat}
	\end{equation}
	for $t\in(\min\{2, T\}, T)$, $j\in\{1,2\}$, and $\alpha\in\mathbb{Z}^N_{+}$ satisfying $|\alpha|\le1$ and $(j, |\alpha|)\neq(1,1)$.
	
	We next consider the case \eqref{eq:condition} and \eqref{eq:additional} hold. 
	By \eqref{eq:additional}, \eqref{eq:bilinear_est1-1}, \eqref{eq:bilinear_est2} and \eqref{eq:bilinear_est11} 
	we have
	\begin{alignat*}{1}
		&
		\left\|
			\partial^\alpha_{x}
			\int^{t}_{1}
				\nabla \cdot
				[
					F_j(t-s)\ast
					\left(
						f(s)\nabla K\ast g(s)
					\right)
				]
			\,\mathrm{d}s
		\right\|_{c}\\
		\le \, &
		C
		\norm{f}_{b, T}\norm{g}_{a, T}
		\int^{t}_{1}
			(t-s)^{\frac{N}{2j}(\frac{2j-|\alpha|}{N}+\frac{1}{c}-\frac{1}{a}-\frac{1}{b})-1}
			s^{\frac{N}{4}(\frac{1}{a}+\frac{1}{b}-\frac{2}{N})-1}
		\,\mathrm{d}s\\
		\le \, &
		C
		\norm{f}_{b, T}\norm{g}_{a, T}
		t^{\frac{N}{2j}(\frac{2j-|\alpha|}{N}+\frac{1}{c}-\frac{1}{a}-\frac{1}{b})+\frac{N}{4}(\frac{1}{a}+\frac{1}{b}-\frac{2}{N})-1}
	\end{alignat*}
	for $t\in(\min\{2, T\}, T)$.
	This, together with \eqref{eq:exponent-2}, implies that
	\begin{equation}
	\label{eq:bilinear_est14}
		t^{\frac{N}{4}(\frac{2}{N}-\frac{1}{c})+\frac{|\alpha|}{4}}
		\left\|
			\partial_x^\alpha
			\int^t_{1}
				\nabla \cdot
				[
					F_j(t-s)\ast
					\left(
						f(s)\nabla K\ast g(s)
					\right)
				]
			\,\mathrm{d}s
		\right\|_{c}
		\le 
		C
		\norm{f}_{b, T}\norm{g}_{a, T}
	\end{equation}
	for $t\in(\min\{2, T\}, T)$, $j\in\{1,2\}$, and $\alpha\in\mathbb{Z}^N_{+}$ satisfying $|\alpha|\le1$ and $(j, |\alpha|)\neq(1,1)$. 
	%--------------------------
	\\[5pt]
	\underline{\bf Step 3.}
	%--------------------------
	Fix $\alpha\in\mathbb{Z}^{N}_+$ with $|\alpha|=1$ arbitrarily.
	By \eqref{eq:bilinear_est1-2}, Propositions~\ref{prop:HLS}, \ref{prop:CZ}, and \ref{prop:Sobolev}, and the H\"{o}lder inequality, we have
	\begin{alignat*}{1}
		&
		\left\|
			\nabla\cdot
			\left(
				f(s)\nabla K\ast g(s)
			\right)
		\right\|_{(\frac{1}{a}+\frac{1}{b}-\frac{1}{N})^{-1}}\\
		= \, &
		\left\|
			f(s)\Delta K\ast g(s)
			+\nabla f(s) \cdot \nabla K\ast g(s)
		\right\|_{(\frac{1}{a}+\frac{1}{b}-\frac{1}{N})^{-1}}\\
		\le \, &
		\|f(s)\|_{(\frac{1}{b}-\frac{1}{N})^{-1}}
		\|\Delta K\ast g(s)\|_{a}+\|\nabla f(s)\|_{a}\|\nabla K\ast g(s)\|_{(\frac{1}{b}-\frac{1}{N})^{-1}}
		\\
		\le \, &
		C
		\left(
			\|\nabla f(s)\|_{b}\|g(s)\|_{a}
			+\|\nabla f(s)\|_{a}\|g(s)\|_{b}
		\right)\\
		\le \, &
		C
		s^{-\frac{N}{2}(\frac{4}{N}-\frac{1}{a}-\frac{1}{b})-\frac{1}{2}}
		\left(
			\norm{f}_{b, T}\norm{g}_{a, T}+\norm{f}_{a, T}\norm{g}_{b, T}
		\right)
	\end{alignat*}
	for $s\in (0, \min\{2, T\})$.
	This, together with \eqref{lqlp_est;Fj} and \eqref{eq:bilinear_est1-1}, 
	implies that
	\begin{equation}
	\label{eq:bilinear_est17}	
	\begin{alignedat}{1}
		&
		\left\|
			\partial^\alpha_x\nabla \cdot
			[
				F_1(t-s)\ast 
				\left(
					f(s)\nabla K\ast g(s)
				\right)
			]
		\right\|_{c}\\
		= \, &
		\left\|
			\partial^\alpha_x
			[
				F_1(t-s)\ast \nabla\cdot
				\left(
					f(s)\nabla K\ast g(s)
				\right)
			]
		\right\|_{c}\\
		\le \, &
		C
		(t-s)^{-\frac{N}{2}
		\left(
			(\frac{1}{a}+\frac{1}{b}-\frac{1}{N})-\frac{1}{c}
		\right)
		-\frac{1}{2}}
		s^{-\frac{N}{2}(\frac{4}{N}-\frac{1}{a}-\frac{1}{b})-\frac{1}{2}}
		\left(
			\norm{f}_{b, T}\norm{g}_{a, T}
			+\norm{f}_{a, T}\norm{g}_{b, T}
		\right)\\
		=\, &
		C
		(t-s)^{\frac{N}{2}(\frac{2}{N}+\frac{1}{c}-\frac{1}{a}-\frac{1}{b})-1}
		s^{\frac{N}{2}(\frac{1}{a}+\frac{1}{b}-\frac{3}{N})-1}
		\left(
			\norm{f}_{b, T}\norm{g}_{a, T}+\norm{f}_{a, T}\norm{g}_{b, T}
		\right)
	\end{alignedat}
	\end{equation}
	for $t\in(0, T)$ and $s\in(0, \min\{2, t\})$.
	Then it follows from \eqref{eq:bilinear_est2} that
	\begin{alignat*}{1}
		&
		\left\|
			\partial^\alpha_x
			\int^t_{\frac{t}{2}}
				\nabla \cdot
				[
					F_1(t-s)\ast
					\left(
						f(s)\nabla K\ast g(s)
					\right)
				]
			\,\mathrm{d}s
		\right\|_{c}\\
		\le \, &
		C
		\left(
			\norm{f}_{b, T}\norm{g}_{a, T}
			+\norm{f}_{a, T}\norm{g}_{b, T}
		\right)
		\int^t_{\frac{t}{2}}
			(t-s)^{\frac{N}{2}(\frac{2}{N}+\frac{1}{c}-\frac{1}{a}-\frac{1}{b})-1}
			s^{\frac{N}{2}(\frac{1}{a}+\frac{1}{b}-\frac{3}{N})-1}
		\,\mathrm{d}s\\
		\le \, &
		C
		\left(
			\norm{f}_{b, T}\norm{g}_{a, T}
			+\norm{f}_{a, T}\norm{g}_{b, T}
		\right)
		t^{\frac{N}{2}(\frac{2}{N}+\frac{1}{c}-\frac{1}{a}-\frac{1}{b})+\frac{N}{2}(\frac{1}{a}+\frac{1}{b}-\frac{3}{N})-1}
	\end{alignat*}
	for $t\in(0, \min\{2, T\})$.
	Since
	\begin{equation}
	\label{eq:exponent-3}
		\dfrac{N}{2}
		\left(
			\dfrac{2}{N}-\dfrac{1}{c}
		\right)
		+\dfrac{1}{2}
		+\dfrac{N}{2}
		\left(
			\dfrac{2}{N}+\dfrac{1}{c}-\dfrac{1}{a}-\dfrac{1}{b}
		\right)
		+\dfrac{N}{2}
		\left(
			\dfrac{1}{a}+\dfrac{1}{b}-\dfrac{3}{N}
		\right)
		-1=0,
	\end{equation}
	we obtain
	\begin{equation}
	\label{eq:bilinear_est18}
	\begin{alignedat}{1}
		&
		t^{\frac{N}{2}(\frac{2}{N}-\frac{1}{c})+\frac{1}{2}}
		\left\|
			\partial^\alpha_x
			\int^t_{\frac{t}{2}}
				\nabla \cdot
				[
					F_1(t-s)\ast
					\left(
						f(s)\nabla K\ast g(s)
					\right)
				]
			\,\mathrm{d}s
		\right\|_{c}\\
		\le \, &
		C
		\left(
			\norm{f}_{b, T}\norm{g}_{a, T}
			+\norm{f}_{a, T}\norm{g}_{b, T}
		\right)
	\end{alignedat}	
	\end{equation}
	for $t\in(0, \min\{2, T\})$ and $\alpha\in\mathbb{Z}^N_+$ satisfying $|\alpha|=1$.
	
	We next consider the case \eqref{eq:condition} and \eqref{eq:additional} hold.
	By \eqref{eq:additional}, \eqref{eq:bilinear_est1-1}, \eqref{eq:bilinear_est2} and \eqref{eq:bilinear_est17}
	we have
	\begin{alignat*}{1}
		&
		\left\|
			\partial^\alpha_x
			\int^t_0
				\nabla \cdot
				[
					F_1(t-s)\ast
					\left(
						f(s)\nabla K\ast g(s)
					\right)
				]
			\,\mathrm{d}s
		\right\|_{c}\\
		\le \, &
		C
		\left(
			\norm{f}_{b, T}\norm{g}_{a, T}
			+\norm{f}_{a, T}\norm{g}_{b, T}
		\right)
		\int^t_0
			(t-s)^{\frac{N}{2}(\frac{2}{N}+\frac{1}{c}-\frac{1}{a}-\frac{1}{b})-1}
			s^{\frac{N}{2}(\frac{1}{a}+\frac{1}{b}-\frac{3}{N})-1}
		\,\mathrm{d}s\\
		\le \, &
		C
		\left(
			\norm{f}_{b, T}\norm{g}_{a, T}
			+\norm{f}_{a, T}\norm{g}_{b, T}
		\right)
		t^{\frac{N}{2}(\frac{2}{N}+\frac{1}{c}-\frac{1}{a}-\frac{1}{b})+\frac{N}{2}(\frac{1}{a}+\frac{1}{b}-\frac{3}{N})-1}
	\end{alignat*}
	for $t\in(0, \min\{2, T\})$.
	This, together with \eqref{eq:exponent-3}, implies that
	\begin{equation}
	\label{eq:bilinear_est19}
	\begin{alignedat}{1}
		&
		t^{\frac{N}{2}(\frac{2}{N}-\frac{1}{c})+\frac{1}{2}}
		\left\|
			\partial^\alpha_x
			\int^t_{0}
				\nabla \cdot
				[
					F_1(t-s)\ast
					\left(
						f(s)\nabla K\ast g(s)
					\right)
				]
			\,\mathrm{d}s
		\right\|_{c}\\
		\le \, & 
		C
		\left(
			\norm{f}_{b, T}\norm{g}_{a, T}
			+\norm{f}_{a, T}\norm{g}_{b, T}
		\right)
	\end{alignedat}
	\end{equation}
	for $t\in(0, \min\{2, T\})$ and $\alpha\in\mathbb{Z}^N_{+}$ satisfying $|\alpha|=1$,
	if \eqref{eq:additional} holds.
	Similarly, since
	\begin{alignat*}{1}
		&
		\left\|
			\partial^\alpha_x
			\int^{1}_{0}
				\nabla \cdot
				[
					F_1(t-s)\ast
					\left(
						f(s)\nabla K\ast g(s)
					\right)
				]
			\,\mathrm{d}s
		\right\|_{c}\\
		\le \, &
		C
		\left(
			\norm{f}_{b, T}\norm{g}_{a, T}
			+\norm{f}_{a, T}\norm{g}_{b, T}
		\right)
		\int^1_0
			(t-s)^{\frac{N}{2}(\frac{2}{N}+\frac{1}{c}-\frac{1}{a}-\frac{1}{b})-1}
			s^{\frac{N}{2}(\frac{1}{a}+\frac{1}{b}-\frac{3}{N})-1}
		\,\mathrm{d}s\\
		\le \, &
		C
		\left(
			\norm{f}_{b, T}\norm{g}_{a, T}
			+\norm{f}_{a, T}\norm{g}_{b, T}
		\right)
		t^{\frac{N}{2}(\frac{2}{N}+\frac{1}{c}-\frac{1}{a}-\frac{1}{b})-1}
		\int^{1}_{0}
			s^{\frac{N}{2}(\frac{1}{a}+\frac{1}{b}-\frac{3}{N})-1}
		\,\mathrm{d}s\\
		\le \, &
		C
		\left(
			\norm{f}_{b, T}\norm{g}_{a, T}
			+\norm{f}_{a, T}\norm{g}_{b, T}
		\right)
		t^{\frac{N}{2}(\frac{2}{N}+\frac{1}{c}-\frac{1}{a}-\frac{1}{b})-1}
	\end{alignat*}
	for $t\in(\min\{2, T\}, T)$ and
	\[
		\dfrac{N}{4}
		\left(
			\dfrac{2}{N}-\dfrac{1}{c}
		\right)
		+\dfrac{1}{4}
		+\dfrac{N}{2}
		\left(
			\dfrac{2}{N}+\dfrac{1}{c}-\dfrac{1}{a}-\dfrac{1}{b}
		\right)
		-1
		=
		-\dfrac{N}{4}
		\left(
			\dfrac{2}{a}+\dfrac{2}{b}
			-\dfrac{3}{N}-\dfrac{1}{c}
		\right),
	\]
	we deduce from \eqref{eq:additional} that
	\begin{equation}
	\label{eq:bilinear_est20}
		t^{\frac{N}{4}(\frac{2}{N}-\frac{1}{c})+\frac{1}{4}}
		\left\|
			\partial^\alpha_x
			\int^{1}_{0}
				\nabla \cdot
				[
					F_1(t-s)\ast
					\left(
						f(s)\nabla K\ast g(s)
					\right)
				]
			\,\mathrm{d}s
		\right\|_{c}
		\le 
		C
		\norm{f}_{a, T}\norm{g}_{b, T},
	\end{equation}
	for $t\in(\min\{2, T\}, T)$ and $\alpha\in\mathbb{Z}^N_{+}$ satisfying $|\alpha|=1$. 
	%--------------------------
	\\[5pt]
	\underline{\bf Step 4.}
	%--------------------------
	Fix $\alpha\in\mathbb{Z}^N_+$ with $|\alpha|= 1$ arbitrarily.
	By \eqref{eq:bilinear_est1-3}, \eqref{eq:bilinear_est1-2}, Propositions~\ref{prop:HLS}, \ref{prop:Sobolev}, and the H\"{o}lder inequality we have
	\begin{alignat*}{1}
		&
		\left\|
			\nabla\cdot\left(f(s)\nabla K\ast g(s)\right)
		\right\|_{(\frac{1}{a}+\frac{1}{b}-\frac{1}{N})^{-1}}\\
		= \, &
		\left\|
			f(s)\Delta K\ast g(s)
			+\nabla f(s) \cdot \nabla K\ast g(s)
		\right\|_{(\frac{1}{a}+\frac{1}{b}-\frac{1}{N})^{-1}}\\
		\le \, &
		\|f(s)\|_{(\frac{1}{b}-\frac{1}{N})^{-1}}\|\nabla K\ast \nabla g(s)\|_{a}
		+\|\nabla f(s)\|_{a}\|\nabla K\ast g(s)\|_{(\frac{1}{b}-\frac{1}{N})^{-1}}\\
		\le \, &
		C
		\left(
			\|\nabla f(s)\|_{b}\|\nabla g(s)\|_{a}
			+\|\nabla f(s)\|_{a}\|g(s)\|_{(\frac{1}{b}-\frac{1}{N})^{-1}}
		\right)\\
		\le \, &
		C
		\left(
			\|\nabla f(s)\|_{b}\|\nabla g(s)\|_{a}
			+\|\nabla f(s)\|_{a}\|\nabla g(s)\|_{b}
		\right)\\
		\le \, &
		C
		s^{-\frac{N}{4}(\frac{4}{N}-\frac{1}{a}-\frac{1}{b})-\frac{1}{2}}
		\left(
			\norm{f}_{b, T}\norm{g}_{a, T}+\norm{f}_{a, T}\norm{g}_{b, T}
		\right)
	\end{alignat*}
	for $s\in(\min\{1,T\}, T)$.
	This, together with \eqref{lqlp_est;Fj} and \eqref{eq:bilinear_est1-1}, 
	implies that
	\begin{equation}
	\label{eq:bilinear_est21}
	\begin{alignedat}{1}
		&
		\left\|
			\partial^\alpha_x\nabla\cdot
			[
				F_1(t-s)\ast 
				\left(
					f(s)\nabla K\ast g(s)
				\right)
			]
		\right\|_{c}\\
		\le \, &
		C
		(t-s)^{-\frac{N}{2}
		\left(
			(\frac{1}{a}+\frac{1}{b}-\frac{1}{N})-\frac{1}{c}
		\right)-\frac{1}{2}}
		s^{-\frac{N}{4}(\frac{4}{N}-\frac{1}{a}-\frac{1}{b})-\frac{1}{2}}
		\left(
			\norm{f}_{b, T}\norm{g}_{a, T}+\norm{f}_{a, T}\norm{g}_{b, T}
		\right)\\
		= \, &
		C
		(t-s)^{\frac{N}{2}(\frac{2}{N}+\frac{1}{c}-\frac{1}{a}-\frac{1}{b})-1}
		s^{\frac{N}{4}(\frac{1}{a}+\frac{1}{b}-\frac{2}{N})-1}
		\left(
			\norm{f}_{b, T}\norm{g}_{a, T}+\norm{f}_{a, T}\norm{g}_{b, T}
		\right)
	\end{alignedat}
	\end{equation}
	for $t\in(0, T)$ and $s\in(\min\{1, t\}, t]$.
	Then it follows from \eqref{eq:bilinear_est2} that
	\begin{alignat*}{1}
		&
		\left\|
			\partial^\alpha_x
			\int^t_{\frac{t}{2}}
				\nabla \cdot
				[
					F_1(t-s)\ast
					\left(
						f(s)\nabla K\ast g(s)
					\right)
				]
			\,\mathrm{d}s
		\right\|_{c}\\
		\le \, &
		C
		\left(
			\norm{f}_{b, T}\norm{g}_{a, T}+\norm{f}_{a, T}\norm{g}_{b, T}
		\right)
		\int^t_{\frac{t}{2}}
			(t-s)^{\frac{N}{2}(\frac{2}{N}+\frac{1}{c}-\frac{1}{a}-\frac{1}{b})-1}
			s^{\frac{N}{4}(\frac{1}{a}+\frac{1}{b}-\frac{2}{N})-1}
		\,\mathrm{d}s\\
		\le \, &
		C
		\left(
			\norm{f}_{b, T}\norm{g}_{a, T}+\norm{f}_{a, T}\norm{g}_{b, T}
		\right)
		t^{\frac{N}{2}(\frac{2}{N}+\frac{1}{c}-\frac{1}{a}-\frac{1}{b})+\frac{N}{4}(\frac{1}{a}+\frac{1}{b}-\frac{2}{N})-1}
	\end{alignat*}
	for $t\in(\min\{2, T\}, T)$.
	Since
	\begin{equation}
	\label{eq:exponent-4}
	\begin{alignedat}{1}
		&
		\dfrac{N}{4}
		\left(
			\dfrac{2}{N}-\dfrac{1}{c}
		\right)
		+\dfrac{1}{4}
		+\dfrac{N}{2}
		\left(
			\dfrac{2}{N}+\dfrac{1}{c}-\dfrac{1}{a}-\dfrac{1}{b}
		\right)
		+\dfrac{N}{4}
		\left(
			\dfrac{1}{a}+\dfrac{1}{b}-\dfrac{2}{N}
		\right)
		-1\\
		= \, &
		-\dfrac{N}{4}
		\left(
			\dfrac{1}{a}+\dfrac{1}{b}
			-\dfrac{1}{N}-\dfrac{1}{c}
		\right),
	\end{alignedat}	
	\end{equation}
	we deduce from \eqref{eq:bilinear_est1-1} that
	\begin{equation}
	\label{eq:bilinear_est22}
	\begin{alignedat}{1}
		&
		t^{\frac{N}{4}(\frac{2}{N}-\frac{1}{c})+\frac{1}{4}}
		\left\|
			\partial^\alpha_x
			\int^t_{\frac{t}{2}}
				\nabla \cdot
				[
					F_1(t-s)\ast
					\left(
						f(s)\nabla K\ast g(s)
					\right)
				]
			\, \mathrm{d}s
		\right\|_{c} \\
		&\le 
		C
		\left(
			\norm{f}_{b, T}\norm{g}_{a, T}+\norm{f}_{a, T}\norm{g}_{b, T}
		\right)
	\end{alignedat}
	\end{equation}
	for $t\in(\min\{2, T\}, T)$ and $\alpha\in\mathbb{Z}^N_{+}$ satisfying $|\alpha|=1$.
	
	We next consider the case \eqref{eq:condition} and \eqref{eq:additional} hold.
	By \eqref{eq:additional}, \eqref{eq:bilinear_est1-1}, \eqref{eq:bilinear_est2} and \eqref{eq:bilinear_est21} 
	we have
	\begin{alignat*}{1}
		&
		\left\|
			\partial^\alpha_{x}
			\int^{t}_{1}
				\nabla \cdot
				[
					F_1(t-s)\ast
					\left(
						f(s)\nabla K\ast g(s)
					\right)
				]
			\,\mathrm{d}s
		\right\|_{c}\\
		\le \, &
		C
		\left(
			\norm{f}_{b, T}\norm{g}_{a, T}+\norm{f}_{a, T}\norm{g}_{b, T}
		\right)
		\int^{t}_{1}
			(t-s)^{\frac{N}{2}(\frac{2}{N}+\frac{1}{c}-\frac{1}{a}-\frac{1}{b})-1}
			s^{\frac{N}{4}(\frac{1}{a}+\frac{1}{b}-\frac{2}{N})-1}
		\,\mathrm{d}s\\
		\le \, &
		C
		\left(
			\norm{f}_{b, T}\norm{g}_{a, T}+\norm{f}_{a, T}\norm{g}_{b, T}
		\right)
		t^{\frac{N}{2}(\frac{2}{N}+\frac{1}{c}-\frac{1}{a}-\frac{1}{b})+\frac{N}{4}(\frac{1}{a}+\frac{1}{b}-\frac{2}{N})-1}
	\end{alignat*}
	for $t\in(\min\{2, T\}, T)$.
	This together with \eqref{eq:exponent-4} implies that
	\begin{equation}
	\label{eq:bilinear_est24}
	\begin{alignedat}{1}
		&
		t^{\frac{N}{4}(\frac{2}{N}-\frac{1}{c})+\frac{1}{4}}
		\left\|
			\partial_x^\alpha
			\int^t_{1}
				\nabla \cdot
				[
					F_1(t-s)\ast
					\left(
						f(s)\nabla K\ast g(s)
					\right)
				]
			\,\mathrm{d}s
		\right\|_{c}\\
		\le \, &
		C
		\left(
			\norm{f}_{b, T}\norm{g}_{a, T}+\norm{f}_{a, T}\norm{g}_{b, T}
		\right)
	\end{alignedat}
	\end{equation}
	for $t\in(\min\{2, T\}, T)$ and $\alpha\in\mathbb{Z}^N_{+}$ satisfying $|\alpha|=1$. 
	%--------------------------
	\\[5pt]
	\underline{\bf Step 5.}
	%--------------------------
	We summarize the estimates obtained in the previous steps and complete the proof of Proposition~\ref{prop:bilinear}.
	If \eqref{eq:condition} holds, then by \eqref{eq:bilinear_est8}, \eqref{eq:bilinear_est13}, \eqref{eq:bilinear_est18}, and \eqref{eq:bilinear_est22} we have
	\begin{alignat*}{1}
		&
		\norm{\Theta_1[f,g]}_{c, T}\\
		\le \, &
		C
		\sum_{|\alpha|\le 1}
		\sum_{j\in\{1,2\}}
		\sup_{t\in(0, \min\{2, T\})}
		t^{\frac{N}{2}(\frac{2}{N}-\frac{1}{c})+\frac{|\alpha|}{2}}
		\left\|
			\partial^\alpha_x
			\int^t_{\frac{t}{2}}
				\nabla \cdot
				[
					F_j(t-s)\ast
					\left(
						f(s)\nabla K\ast g(s)
					\right)
				]
			\,\mathrm{d}s
		\right\|_{c}\\
		& \,
		+C
		\sum_{|\alpha|\le 1}
		\sum_{j\in\{1,2\}}
		\sup_{t\in(\min\{2, T\}, T)}
		t^{\frac{N}{4}(\frac{2}{N}-\frac{1}{c})+\frac{|\alpha|}{4}}
		\left\|
			\partial^\alpha_x
			\int^t_{\frac{t}{2}}
				\nabla \cdot
				[
					F_j(t-s)\ast
					\left(
						f(s)\nabla K\ast g(s)
					\right)
				]\,\mathrm{d}s
		\right\|_{c}\\
		\le \, &
		C
		\left(\norm{f}_{a, T}+\norm{f}_{b, T}\right)
		\left(\norm{g}_{a, T}+\norm{g}_{b, T}\right).
	\end{alignat*}
	Thus we obtain the desired estimate in the case where \eqref{eq:condition} holds.
	On the other hand, if \eqref{eq:condition} and \eqref{eq:additional} hold, then by \eqref{eq:bilinear_est9}, \eqref{eq:bilinear_est10}, \eqref{eq:bilinear_est14}, \eqref{eq:bilinear_est19}, \eqref{eq:bilinear_est20}, and \eqref{eq:bilinear_est24} we have
	\begin{alignat*}{1}
		&
		\norm{\Theta_2[f,g]}_{c, T}\\
		\le \, &
		C
		\sum_{|\alpha|\le 1}
		\sum_{j\in\{1,2\}}
		\sup_{t\in(0, \min\{2, T\})}
		t^{\frac{N}{2}(\frac{2}{N}-\frac{1}{c})+\frac{|\alpha|}{2}}
		\left\|
			\partial^\alpha_x
			\int^t_{0}
				\nabla \cdot
				[
					F_j(t-s)\ast
					\left(
						f(s)\nabla K\ast g(s)
					\right)
				]
			\,\mathrm{d}s
		\right\|_{c}\\
		& \,
		+C
		\sum_{|\alpha|\le 1}
		\sum_{j\in\{1,2\}}
		\sup_{t\in(\min\{2, T\}, T)}
		t^{\frac{N}{4}(\frac{2}{N}-\frac{1}{c})+\frac{|\alpha|}{4}}
		\left\|
			\partial^\alpha_x
			\int^1_{0}
				\nabla \cdot
				[
					F_j(t-s)\ast
					\left(
						f(s)\nabla K\ast g(s)
					\right)
				]\,\mathrm{d}s
		\right\|_{c}\\
		& \,
		+C
		\sum_{|\alpha|\le 1}
		\sum_{j\in\{1,2\}}
		\sup_{t\in(\min\{2, T\}, T)}
		t^{\frac{N}{4}(\frac{2}{N}-\frac{1}{c})+\frac{|\alpha|}{4}}
		\left\|
			\partial^\alpha_x
			\int^t_{1}
				\nabla \cdot
				[
					F_j(t-s)\ast
					\left(
						f(s)\nabla K\ast g(s)
					\right)
				]\,\mathrm{d}s
		\right\|_{c}\\
		\le \, &
		C
		\left(\norm{f}_{a, T}+\norm{f}_{b, T}\right)
		\left(\norm{g}_{a, T}+\norm{g}_{b, T}\right).
	\end{alignat*}
	Thus we obtain the desired estimate in the case where \eqref{eq:condition} and \eqref{eq:additional} hold.
	Moreover, the continuity of $\Theta_1[f,g]$ and $\Theta_2[f,g]$ follows from Corollary~\ref{cor:semigroup}-(2), estimates obtained in the previous steps, and the argument in Weissler~\cite[Lemma~2.1]{W1979}, and hence we omit the proof.
	Therefore, we complete the proof of Proposition~\ref{prop:bilinear}.
\end{proof}

%============================================================================%
\subsection{Proof of Theorem~\ref{thm:main}}
\label{subsec:proof}
%============================================================================%
We are now in a position to complete the proof of Theorem~\ref{thm:main}.
The proof is based on the argument in Brezis--Cazenave~\cite{BC1996}.
\begin{proof}[Proof of Theorem~\ref{thm:main}]
	Let $q$ satisfy
	\begin{equation}
	\label{eq:q}
		\dfrac{3}{2N}<\dfrac{1}{q}<\dfrac{25}{16N}.
	\end{equation}
	For $T>0$ and $f\in X_{q, T}$ we define
	\[
		\Psi[f](t)
		\coloneqq
		F(t)\ast v_0
		-\Theta_2[f,f](t)
		=
		F(t)\ast v_0
		-\int^t_0
		\nabla\cdot
		[
			F(t-s)\ast 
			\left(
				f(s)\nabla K\ast f(s)
			\right)
		]
		\,\mathrm{d}s
	\]
	for $t\in(0, T)$.
	Since the choice $a=b=c=q$ satisfies \eqref{eq:condition} and \eqref{eq:additional} in Proposition~\ref{prop:bilinear} (see Figure~\ref{fig:bilinear}-(a)),
	we have
	\begin{alignat}{1}
	\label{eq:contra-0}
		&
		\Psi[f]\in X_{q,T},\\
	\label{eq:contra-1}
		&
		\norm{\Psi[f]}_{q, T}
		\le 
		\norm{F(t)\ast v_0}_{q, T}
		+C
		\norm{f}_{q, T}^2,\\
	\label{eq:contra-2}
		&\norm{\Psi[f]-\Psi[g]}_{q, T}
		\le 
		C
		\left(\norm{f}_{q, T}+\norm{g}_{q, T}\right)
		\norm{f-g}_{q, T},
	\end{alignat}
	for $T>0$ and $f,g\in X_{q, T}$.
	In particular, combining \eqref{eq:contra-1} with Proposition~\ref{prop:semigroup} and Corollary~\ref{cor:semigroup}-(3), we obtain
	\begin{alignat}{1}
	\label{eq:contra-3}
		&\lim_{t\searrow0}
		\norm{\Psi[f]}_{q,t}
		=0
		\quad
		\text{if}
		\quad
		\lim_{t\searrow0}
		\norm{f}_{q,t}
		=0,\\
	\label{eq:contra-4}
		&
		\norm{\Psi[f]}_{q, T}
		\le 
		C
		\|v_0\|_{\frac{N}{2}}
		+C
		\norm{f}_{q, T}^2,
	\end{alignat}
	for $T>0$ and $f\in X_{q, T}$, respectively.
	
	We construct a function $v$ satisfying \eqref{eq:IE}
	as a fixed point of $\Psi$.
	Fix $T>0$ arbitrarily.
	Let $\varepsilon_*>0$ and $c_*>0$ be constants chosen later and set
	\[
		Y_{T}
		\coloneqq
		\left\{f
			\in X_{q, T}
			\,\middle|\, 
			\norm{f}_{q, T}
			\le c_*\|v_0\|_{\frac{N}{2}},
			\lim_{t\searrow 0}\norm{f}_{q, t}=0
		\right\},
		\quad
		d_{T}(f, g)
		\coloneqq
		\norm{f-g}_{q, T}.
	\]
	Then $(Y_T, d_T)$ is a complete metric space.
	Moreover, by \eqref{eq:initial}, \eqref{eq:contra-2}, and \eqref{eq:contra-4} there exist $c_1, c_2>0$, which are independent of $T$, such that
	\[
		\norm{\Psi[f]}_{q, T}\le c_1(1+c_*^2\varepsilon_*)\|v_0\|_{\frac{N}{2}},
		\quad
		d_T(\Psi[f],\Psi[g])\le c_2 c_* \varepsilon_* d_T(f,g),
	\]
	for $f, g\in Y_T$.
	Thus, setting
	\[
		c_*
		\coloneqq
		2c_1>0,
		\quad
		\varepsilon_*
		\coloneqq
		\min
		\left\{
			\dfrac{1}{4c_1^2}, \dfrac{1}{4c_1c_2}
		\right\}>0,
	\]
	we deduce from \eqref{eq:contra-0} and \eqref{eq:contra-3} that $\Psi$ is a contraction mapping on $(Y_T, d_T)$.
	Since $\varepsilon_*>0$ and $c_*>0$ are independent of $T$, 
	there exists $v\in Y_\infty$ such that \eqref{eq:IE} holds for $t>0$.

	We next show that $v\in C([0, \infty); \mathcal{L}^{\frac{N}{2}})\cap X_{\frac{N}{2}, \infty}$ and
	\begin{equation}
	\label{eq:est_N/2}
		\norm{v}_{\frac{N}{2}, \infty}\le C\|v_0\|_{\frac{N}{2}}
	\end{equation}
	if $v\in Y_\infty$ satisfies \eqref{eq:IE} for $t>0$.
	Since $a=b=q$ and $c=N/2$ satisfy \eqref{eq:condition} and \eqref{eq:additional} in Proposition~\ref{prop:bilinear} (see Figure~\ref{fig:bilinear}-(a)), 
	we have $\Theta_2[v,v]\in X_{\frac{N}{2},\infty}$ and
	\[
		\norm{\Theta_2[v,v]}_{\frac{N}{2}, \infty}
		\le 
		C
		\norm{v}_{q,\infty}^2
		\le C\|v_0\|_{\frac{N}{2}},
		\quad
		\lim_{t\searrow0}\norm{\Theta_2[v,v]}_{\frac{N}{2}, t}
		=0.
	\]
	Since $v(t)=F(t)\ast v_0-\Theta_2[v,v](t)$, 
	we deduce from \eqref{lqlp_est;F} and Corollary~\ref{cor:semigroup}-(1), (2) 
	that $v$ belongs to $C([0, \infty); \mathcal{L}^{\frac{N}{2}})\cap X_{\frac{N}{2},\infty}$ and satisfies \eqref{eq:est_N/2}.
	
	We finally show that $v\in Y_\infty$ satisfying \eqref{eq:IE} for $t>0$ gives  $v\in X_{\infty, \infty}$ with
	\begin{equation}
	\label{eq:est_infty}
		\norm{v}_{\infty, \infty}\le C\|v_0\|_{\frac{N}{2}}.
	\end{equation}
	Let $q_1, q_2$ satisfy
	\[
		\dfrac{9}{8N}<\dfrac{1}{q_1}<\dfrac{5}{4N},
		\quad
		\dfrac{1}{2N}<\dfrac{1}{q_2}<\dfrac{3}{4N}.
	\]
	Then we observe from \eqref{eq:q} that
	the choice $a=b=q$ and $c=q_{1}$ satisfies \eqref{eq:condition} and \eqref{eq:additional} in Proposition~\ref{prop:bilinear} (see Figure~\ref{fig:bilinear}-(a)). 
	Thus, applying an argument similar to that in the previous paragraph, we obtain $v\in X_{q_1, \infty}$ 
	and $v$ satisfies
	\begin{equation}
	\label{eq:est_q}
		\norm{v}_{q_1, \infty}
		\le C\|v_0\|_{\frac{N}{2}}.
	\end{equation}
	
	By the semigroup property of $F$ we have
	\begin{equation}
	\label{eq:semi}
		v(t)
		=
		F
		\left(
			\dfrac{t}{2}
		\right)
		\ast v
		\left(
			\dfrac{t}{2}
		\right)
		-\Theta_1[v,v](t)
	\end{equation}
	for $t>0$.
	Since the choice $a=b=q_1$ and $c=q_2$ satisfies \eqref{eq:condition} in Proposition~\ref{prop:bilinear} (see Figure~\ref{fig:bilinear}-(b)), 
	we observe 
	that $\Theta_1[v,v]\in X_{q_2, \infty}$ with
	\[
			\norm{\Theta_1[v,v]}_{q_2, \infty}
			\le 
			C
			\|v_0\|_{\frac{N}{2}}.
	\]
Moreover, by \eqref{lqlp_est;F}, \eqref{eq:est_N/2}, and $v\in X_{\frac{N}{2}, \infty}$ we see that the map
	\[
		(0, \infty)
		\ni 
		t
		\mapsto 
		F
		\left(
			\dfrac{t}{2}
		\right)
		\ast v
		\left(
			\dfrac{t}{2}
		\right)
	\]
	belongs to $X_{q_2, \infty}$ and $v$ satisfies
	\[
		\Short{t}^{\frac{N}{2}(\frac{2}{N}-\frac{1}{q_2})+\frac{|\alpha|}{2}}
		\Long{t}^{\frac{N}{4}(\frac{2}{N}-\frac{1}{q_2})+\frac{|\alpha|}{4}}\left\|
			\partial^\alpha_x
			\left[
				F
				\left(
					\dfrac{t}{2}
				\right)
				\ast v
				\left(
					\dfrac{t}{2}
				\right)
			\right]
		\right\|_{q_2}
		\le
		C
		\left\|v\left(\dfrac{t}{2}\right)\right\|_{\frac{N}{2}}
		\le 
		C
		\|v_0\|_{\frac{N}{2}}
	\]
	for $t>0$ and $\alpha\in\mathbb{Z}^N_+$ with $|\alpha|\le1$. 
	Hence these estimates, together with \eqref{eq:est_q} and \eqref{eq:semi}, imply that $v\in X_{q_2, \infty}$ and 
	\begin{equation}
	\label{eq:est_q2}
		\norm{v}_{q_2, \infty}
		\le C\|v_0\|_{\frac{N}{2}}.
	\end{equation}
	Put $a=q_2$, $b=q_1$, and $c=\infty$. 
	Then these constants satisfy \eqref{eq:condition} in Proposition~\ref{prop:bilinear} (see Figure~\ref{fig:bilinear}-(c)), 
	and 
	the same method as in the proof of \eqref{eq:est_q2} yields $v\in X_{\infty, \infty}$ together with the estimate \eqref{eq:est_infty}.
	Furthermore,
	the desired estimate in $X_{r, \infty}$ for $r\in(N/2, \infty)$ follows from \eqref{eq:est_N/2}, \eqref{eq:est_infty}, 
	and the H\"{o}lder inequality. Therefore, we complete the proof of Theorem~\ref{thm:main}.
\end{proof}

%============================================================================%
\appendix
%============================================================================%

%============================================================================%
\section{Conditional uniqueness}
\label{sec:unique}
%============================================================================%
In this appendix, we prove conditional uniqueness for problem~\eqref{eq:KS}.
For simplicity, we set
\begin{alignat*}{1}
		Z_{a, T}
		\coloneqq 
		\left\{
			f 
			\in 
			C((0, T); L^a_{\rm ul}(\mathbb{R}^N))
			\, \middle| \, 
			\|
				f
			\|_{Z_{a, T}}
			\coloneqq 
			\sup_{t \in (0, T)}
			t^{\frac{N}{2}(\frac{2}{N}-\frac{1}{a})}
			\|
				f(t)
			\|_{L^a_{\rm ul}(\mathbb{R}^N)}
			<
			\infty
		\right\}
\end{alignat*}
for $a\in[N/2,\infty]$.
\begin{Proposition}
\label{prop:unique}
	Let $N\ge3$, $T>0$, 
	and 
	$
		u_0
		\in 
		\mathcal{L}^\frac{N}{2}_{\rm ul}(\mathbb{R}^N)
	$.
	Let 
	$
		u, \tilde{u}
		\in 
		C([0, T); L^\frac{N}{2}_{\rm ul}(\mathbb{R}^N))
	$ 
	be solutions to problem~\eqref{eq:KS} with 
	$
		u(0)
		=
		\tilde{u}(0)
		=
		u_0
	$.
	Moreover, 
	assume that 
	$
		u(t)
		\in
		\mathcal{L}^\frac{N}{2}_{\rm ul}(\mathbb{R}^N)
	$ 
	for 
	$
		t
		\in
		[0,T)
	$ 
	and 
	$
		u, \tilde{u} \in Z_{q, T}
	$ 
	for some 
	$
		q \in (N/2, N)
	$.
	Then 
	$
		u
		=
		\tilde{u}
	$.
\end{Proposition}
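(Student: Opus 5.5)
The plan is a Gronwall/small-time argument in the uniformly local spaces, based on the heat-kernel Duhamel formula of Definition~\ref{def:KSsol}. Set $w\coloneqq u-\tilde u$. Then
\[
	w(t)=-\int_0^t\nabla\cdot\bigl[G(t-s)\ast\bigl(w(s)\nabla K\ast u(s)+\tilde u(s)\nabla K\ast w(s)\bigr)\bigr]\,\mathrm{d}s .
\]
I will estimate $\|w\|_{Z_{q,\tau}}$ on a short interval $(0,\tau)$ and then continue step by step up to $T$.

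\textbf{Linear and bilinear tools in $L^p_{\rm ul}$.} The first ingredient is the standard heat estimate in uniformly local spaces, valid for $1\le p\le r\le\infty$ and $0<t\le T$:
\[
	\|\nabla G(t)\ast f\|_{L^r_{\rm ul}}\le C(T)\,t^{-\frac N2(\frac1p-\frac1r)-\frac12}\|f\|_{L^p_{\rm ul}} .
\]
It follows by covering $\mathbb{R}^N$ with unit balls and using Gaussian decay; the constant depends on $T$, which is harmless since $T<\infty$. The second ingredient is a uniformly local version of Proposition~\ref{prop:HLS}. Split $\nabla K=\nabla K\chi_{B_1}+\nabla K\chi_{B_1^c}$. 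The near part lies in $L^{N/(N-1)}_{\rm w}$ with compact support, so a localized Hardy--Littlewood--Sobolev inequality gives $\|\nabla K\ast g\|_{L^{(1/q-1/N)^{-1}}_{\rm ul}}\le C\|g\|_{L^q_{\rm ul}}$; this uses $q<N$. The far part decays exponentially, so convolution with it is bounded $L^q_{\rm ul}\to L^\infty_{\rm ul}$. Applying Hölder with exponents $q$ and $(1/q-1/N)^{-1}$, and noting that $2/q-1/N<1$ because $q>N/2$, I get
\[
	\|w\nabla K\ast u\|_{L^{p}_{\rm ul}}\le C\|w\|_{L^q_{\rm ul}}\|u\|_{L^q_{\rm ul}},\qquad \frac1p=\frac2q-\frac1N .
\]
The same bound holds with the roles of $w$ and $u$ exchanged.

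\textbf{Time weights.} Take $r=q$ in the heat estimate. The power of $(t-s)$ is then $-\frac N2(\frac1q-\frac1N)-\frac12=-\frac{N}{2q}$, which is integrable at $s=t$ since $q>N/2$. The power of $s$ coming from the two $Z_{q}$ norms is $-N(\frac2N-\frac1q)=\frac Nq-2>-1$, so it is integrable at $s=0$. Combining these gives
\[
	\|w\|_{Z_{q,\tau}}\le C\bigl(\|u\|_{Z_{q,\tau}}+\|\tilde u\|_{Z_{q,\tau}}\bigr)\|w\|_{Z_{q,\tau}} .
\]
This is scale invariant: the resulting Beta integral gives exactly the weight $t^{-\frac N2(\frac2N-\frac1q)}$. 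So the constant does not become small merely because $\tau$ is small.

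\textbf{Main obstacle.} The difficulty is to make $\|u\|_{Z_{q,\tau}}+\|\tilde u\|_{Z_{q,\tau}}$ small as $\tau\to0$.

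For $u$, I use that $u_0$ and each $u(t)$ lie in $\mathcal{L}^{N/2}_{\rm ul}$, which is the closure of $BUC$. Approximate $u_0$ in $L^{N/2}_{\rm ul}$ by a $BUC$ function $\phi$. For $\phi$, $t^{\frac N2(\frac2N-\frac1q)}\|G(t)\ast\phi\|_{L^q_{\rm ul}}\to0$, and the remainder is small uniformly in $t$. Hence $\limsup_{t\to0}t^{\frac N2(\frac2N-\frac1q)}\|G(t)\ast u_0\|_{L^q_{\rm ul}}=0$, in the spirit of Corollary~\ref{cor:semigroup}-(3) and \cite{BC1996}. Inserting this into the Duhamel formula for $u$ and using the same bilinear bound, a continuity argument shows $\|u\|_{Z_{q,\tau}}\to0$. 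The continuity argument works because $\|u\|_{Z_{q,\tau}}$ is finite and the quadratic term is controlled by its square.

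For $\tilde u$, membership in $\mathcal{L}^{N/2}_{\rm ul}$ is not assumed. Here I would rewrite the difference estimate asymmetrically, placing $\tilde u$ only in the factor $\nabla K\ast w$. Then I would use $\tilde u=G\ast u_0-(\text{Duhamel term})$ to express $\|\tilde u\|_{Z_{q,\tau}}$ via $\|G(t)\ast u_0\|$, which is small, plus a quadratic term in $\tilde u$. I expect this step, showing that a bounded $Z_q$ solution with $\mathcal{L}^{N/2}_{\rm ul}$ data automatically has small $Z_q$ norm near $t=0$, to be the delicate point. If it fails, the fallback is the Brezis--Cazenave trick: compare both solutions with the approximating solution built from $\phi$, for which the smallness is automatic.

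\textbf{Conclusion.} Once the prefactor is below $1/2$ on $(0,\tau)$, we get $w=0$ there. The same argument, restarted from any time $t_0$ at which the two solutions agree, applies with unweighted norms on short intervals, since $u(t_0)$ is then bounded in $L^q_{\rm ul}$. This yields $w\equiv0$ on $[0,T)$.
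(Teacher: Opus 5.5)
Your linear and bilinear estimates and the Beta-integral computation are fine. The paper does the same computation, but in $Z_r$ for an auxiliary $r\in(N/2,q)$ rather than in $Z_q$. There is, however, a genuine gap exactly at the step you call the main obstacle.

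Your continuity argument for $u$ is circular. Set $M(\tau)=\|u\|_{Z_{q,\tau}}$, which is nondecreasing and finite. The Duhamel formula gives $M(\tau)\le\epsilon(\tau)+CM(\tau)^2$ with $\epsilon(\tau)\to0$. This only shows that $\lim_{\tau\searrow0}M(\tau)$ is either $0$ or at least $1/C$. Finiteness does not exclude the second branch: a bootstrap must start from smallness at some time, and that smallness is what you are trying to prove.

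This is more than a missing justification. For analogous critical problems such non-small limits really occur. An example is the self-similar solutions of Haraux--Weissler for $\partial_t u=\Delta u+|u|^{p-1}u$ with zero initial data, whose critical weighted norms are constant in time. So an argument that uses only the Duhamel formula, the bilinear bound and finiteness of the weighted norm cannot work in general. The same objection applies to your plan for $\tilde u$ and to the fallback. Comparing with the solution built from a $BUC$ approximation still leaves the unknown factor $\|\tilde u\|_{Z_{q,\tau}}$ in front of the difference.

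The missing ingredient is a hypothesis you never use: $u,\tilde u\in C([0,T);L^{N/2}_{\rm ul}(\mathbb{R}^N))$ with $u(0)=\tilde u(0)=u_0\in\mathcal{L}^{N/2}_{\rm ul}(\mathbb{R}^N)$, i.e.\ continuity at $t=0$ in the critical norm. The paper exploits it by lowering the exponent. Take $N/2<r<q$ and $\theta\in(0,1)$ with $1/r=2\theta/N+(1-\theta)/q$. Then H\"{o}lder's inequality gives
\[
\|u-G(\cdot)\ast u_0\|_{Z_{r,t}}\le\|u-G(\cdot)\ast u_0\|_{Z_{\frac N2,t}}^{\theta}\,\|u-G(\cdot)\ast u_0\|_{Z_{q,t}}^{1-\theta}.
\]
The first factor is at most $\sup_{s<t}\|u(s)-u_0\|_{L^{N/2}_{\rm ul}}+\sup_{s<t}\|G(s)\ast u_0-u_0\|_{L^{N/2}_{\rm ul}}$. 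This tends to $0$ by time continuity and \eqref{eq:unipro1}. The second factor is bounded by $\|u\|_{Z_{q,T}}+C\|u_0\|_{L^{N/2}_{\rm ul}}$.

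Combine this with your (correct) $BUC$-approximation argument for $\|G(\cdot)\ast u_0\|_{Z_{r,t}}\to0$. The result is $\|u\|_{Z_{r,t}}+\|\tilde u\|_{Z_{r,t}}\to0$, and the difference inequality, run in $Z_r$, closes. At $r=q$ one would have $\theta=0$, which is why staying at the given exponent $q$ is a dead end. Since only $\tilde u(0)=u_0$ and continuity at $t=0$ enter, $\tilde u$ is handled exactly like $u$; no asymmetric treatment is needed.

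Your continuation step, by contrast, is correct. After the first agreement time $t_*$, the unweighted $L^q_{\rm ul}$ bounds and the factor $\delta^{1-\frac{N}{2q}}$ give uniqueness on short intervals of fixed length. This even avoids the hypothesis $u(t)\in\mathcal{L}^{N/2}_{\rm ul}(\mathbb{R}^N)$ for $t>0$, which the paper uses only to restart its small-time argument at $T_*$.
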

\begin{Remark}
	In view of the results in \cite{F2022, KSY2012}, 
	it seems that the additional assumption 
	$
		u, \tilde{u}
		\in 
		Z_{q, T}
	$ 
	is removable if 
	$
		N
		\ge
		4
	$.
\end{Remark}
Before proving Proposition~\ref{prop:unique}, 
we recall some properties of uniformly local Lebesgue spaces (see~\cite[Proposition~2.1, Corollary~2.2]{ARCD2004}, 
\cite[Proposition~2.2, Corollary~3.1]{MT2006}, 
and \cite[Corollary~2.3]{S2021}):
\begin{itemize}
	\item
		For 
		$
			u_0
			\in 
			L^\frac{N}{2}_{\rm ul}(\mathbb{R}^N)
		$, 
		the following conditions are equivalent:
		\begin{equation}
		\label{eq:unipro1}
			{\rm (a)}
			\ 
				u_0
				\in 
				\mathcal{L}^\frac{N}{2}_{\rm ul}(\mathbb{R}^N);\quad 
			{\rm (b)}
			\ 
			\lim_{t \searrow 0}
				\|
					G(t) \ast u_0
					-
					u_0
				\|_{L^\frac{N}{2}_{\rm ul}(\mathbb{R}^N)}
				=
				0.
		\end{equation}
	\item
		For $1 \le p \le q \le \infty$ and $\alpha \in \mathbb{Z}_+^N$ with $|\alpha|\le 1$,
		there exists 
		$
			C
			=
			C(N, p, q, \alpha)
			>
			0
		$ 
		such that
		\begin{equation}
		\label{eq:unipro2}
			\|
				\partial_x^\alpha
				G(t) \ast f
			\|_{L^q_{\rm ul}(\mathbb{R}^N)}
			\le 
			C
			t^{-\frac{N}{2}(\frac{1}{p}-\frac{1}{q})-\frac{|\alpha|}{2}}
			\|
				f
			\|_{L^{p}_{\rm ul}(\mathbb{R}^N)}
		\end{equation}
		for $t \in (0,1)$.
	\item
		Let $1<p<N$.
		Then there exists $C=C(N, p)>0$ such that
		\begin{equation}
		\label{eq:unipro3}
			\|
				\nabla K \ast f
			\|_{L^{(\frac{1}{p}-\frac{1}{N})^{-1}}_{\rm ul}(\mathbb{R}^N)}
			\le 
			C
			\|
				f
			\|_{L^{p}_{\rm ul}(\mathbb{R}^N)}.
		\end{equation}
\end{itemize}
\begin{proof}[Proof of Proposition~\ref{prop:unique}]
	Set 
	$
		w
		\coloneqq 
		u
		-
		\widetilde{u}
	$.
	Then $w$ satisfies
	\begin{equation}
	\label{eq:uni-1}
		\begin{alignedat}{1}
			w(t)
			= \, & 
			-
			\int^t_0
				\nabla \cdot 
				\left[
					G(t-s) \ast 
					(
						w(s) \nabla K \ast u(s)
					)
				\right]
			\, \mathrm{d}s\\
			& \, 
			-
			\int^t_0
				\nabla \cdot 
				\left[
					G(t-s) \ast 
					(
						\widetilde{u}(s) \nabla K \ast w(s)
					)
				\right]
			\, \mathrm{d}s
		\end{alignedat}
	\end{equation}
	for $t \in (0, T)$.
	Let $r \in (1, \infty)$ satisfy 
	$
		N/2
		<
		r
		<
		q 
		<
		N
	$.
	Since
	\[
		0
		\le 
		\dfrac{1}{r}
		\le
		\dfrac{2}{r}
		-
		\frac{1}{N}
		\le 
		1,
		\quad 
		0
		<
		\dfrac{1}{r}
		-
		\dfrac{1}{N}
		<
		\dfrac{1}{r}
		<
		1,
		\quad 
		-
		2
		+
		\frac{N}{r}
		>
		-
		1,
		\quad 
		-
		\frac{N}{2r}
		>
		-
		1,
	\]
	we observe from \eqref{eq:unipro2} and \eqref{eq:unipro3} that
	\begin{alignat*}{1}
		&
		\left\|
			\int^t_0
				\nabla \cdot 
				\left[
					G(t-s) \ast 
					(
						w(s) \nabla K \ast u(s)
					)
				\right]
			\, \mathrm{d}s
		\right\|_{L^{r}_{\rm ul}(\mathbb{R}^N)}\\
		\le \, &
		C
		\int^t_0
			(t-s)^{-\frac{1}{2}-\frac{N}{2}((\frac{2}{r}-\frac{1}{N})-\frac{1}{r})}
			\left\|
				w(s)
				[
					\nabla K \ast u(s)
				]
			\right\|_{L^{(\frac{2}{r}-\frac{1}{N})^{-1}}_{\rm ul}(\mathbb{R}^N)}
		\, \mathrm{d}s\\
		\le \, & C
		\int^t_0
			(t-s)^{-\frac{N}{2r}}
			\|
				w(s)
			\|_{L^{r}_{\rm ul}(\mathbb{R}^N)}
			\|
				\nabla K \ast u(s)
			\|_{L^{(\frac{1}{r}-\frac{1}{N})^{-1}}_{\rm ul}(\mathbb{R}^N)}
		\, \mathrm{d}s\\
		\le \, & 
		C
		\int^t_0
			(t-s)^{-\frac{N}{2r}}
			\|
				w(s)
			\|_{L^{r}_{\rm ul}(\mathbb{R}^N)}
			\|
				u(s)
			\|_{L^{r}_{\rm ul}(\mathbb{R}^N)}
		\, \mathrm{d}s\\
		\le \, & 
		C
		\|
			w
		\|_{Z_{r, t}}
		\|
			u
		\|_{Z_{r, t}}
		\int^t_0
			(t-s)^{-\frac{N}{2r}}
			s^{-2+\frac{N}{r}}
		\, \mathrm{d}s\\
		= \, &
		C
		\|
			w
		\|_{Z_{r, t}}
		\|
			u
		\|_{Z_{r, t}} 
		t^{-\frac{N}{2}(\frac{2}{N}-\frac{1}{r})}
	\end{alignat*}
	for $t \in (0, \min \{ T,1 \})$.
	By the same argument, 
	we have
	\[
		\left\|
			\int^t_0
				\nabla \cdot 
				\left[
					G(t-s) \ast 
					(
						\widetilde{u}(s) \nabla K \ast w(s)
					)
				\right]
			\, \mathrm{d}s
		\right\|_{L^r_{\rm ul}(\mathbb{R}^N)}
		\le 
		C
		\|
			w
		\|_{Z_{r, t}}
		\|
			\widetilde{u}
		\|_{Z_{r, t}} 
		t^{-\frac{N}{2}(\frac{2}{N}-\frac{1}{r})}
	\]
	for $t \in (0, \min \{ T, 1 \})$.
	Combining these estimates with \eqref{eq:uni-1}, 
	we obtain
	\begin{equation}
	\label{eq:uni-2}
		\|
			w
		\|_{Z_{r, t}}
		\le 
		C
		\left(
			\|
				u
			\|_{Z_{r, t}}
			+
			\|
				\widetilde{u}
			\|_{Z_{r, t}}
		\right)
		\|
			w
		\|_{Z_{r, t}}
	\end{equation}
	for $t \in (0, \min \{ T, 1 \})$.
	
	We next show that 
	\begin{equation}
	\label{eq:uni-3}
		\lim_{t \searrow 0}
			\|
				u
			\|_{Z_{r, t}}
		=
		\lim_{t \searrow 0}
			\|
				\widetilde{u}
			\|_{Z_{r, t}}
		=
		0.
	\end{equation}
	We only consider $u$.
	By the triangle inequality, 
	we see that
	\begin{equation}
	\label{eq:uni-4}
		\|
			u
		\|_{Z_{r, t}}
		\le 
		\|
			u
			-
			G(\cdot) \ast u_0
		\|_{Z_{r, t}}
		+
		\|
			G(\cdot) \ast u_0
		\|_{Z_{r, t}}
	\end{equation}
	for $t \in (0, T)$.
	Let $\theta \in (0,1)$ satisfy 
	$
		1/r
		=
		2
		\theta/N
		+
		(1-\theta)/q
	$.
	Then we observe from \eqref{eq:unipro2} and the H\"{o}lder inequality that
	\begin{equation}
	\label{eq:uni-5}	
	\begin{alignedat}{1}
		&
		\|
			u
			-
			G(\cdot) \ast u_0
		\|_{Z_{r, t}}\\
		\le \, &
		\|
			u
			-
			G(\cdot) \ast u_0
		\|_{Z_{\frac{N}{2}, t}}^\theta
		\|
			u
			-
			G(\cdot) \ast u_0
		\|_{Z_{q, t}}^{1-\theta}\\
		\le \, &
		C
		\left(
			\|
				u
				-
				u_0
			\|_{Z_{\frac{N}{2}, t}}
			+
			\|
				G(\cdot) \ast u_0
				-
				u_0
			\|_{Z_{\frac{N}{2}, t}}
		\right)^{\theta}
		\left(
			\|
				u
			\|_{Z_{q, t}}
			+
			\|
				u_0
			\|_{L^{\frac{N}{2}}_{\rm ul}(\mathbb{R}^N)}
		\right)^{1-\theta}
	\end{alignedat}
	\end{equation}
	for $t \in (0, T)$.
	On the other hand, 
	letting 
	$
		\{ u_{0,k} \}^\infty_{k=1}
		\subset 
		BUC(\mathbb{R}^N)
	$ 
	satisfy 
	$
		u_{0,k}
		\to 
		u_0
	$ 
	as 
	$
		k
		\to
		\infty
	$ 
	in 
	$
		L^{\frac{N}{2}}_{\rm ul}(\mathbb{R}^N)
	$, 
	we observe from \eqref{eq:unipro2} that
	\begin{alignat*}{1}
		\limsup_{t \searrow 0}
		\|
			G(\cdot)\ast u_0
		\|_{Z_{r, t}}
		\le \, & 
		\limsup_{t \searrow 0}
		\|
			G(\cdot) \ast (u_0-u_{0,k})
		\|_{Z_{r, t}}
		+
		\limsup_{t \searrow 0}
		\|
			G(\cdot) \ast u_{0,k}
		\|_{Z_{r, t}}\\
		\le \, & 
		C
		\|
			u_0
			-
			u_{0,k}
		\|_{L^{\frac{N}{2}}_{\rm ul}(\mathbb{R}^N)}
		+
		\limsup_{t \searrow 0}
		t^{\frac{N}{2}(\frac{2}{N}-\frac{1}{r})}
		\|
			u_{0,k}
		\|_{L^{r}_{\rm ul}(\mathbb{R}^N)}\\
		= \, & 
		C
		\|
			u_0
			-
			u_{0,k}
		\|_{L^{\frac{N}{2}}_{\rm ul}(\mathbb{R}^N)}
	\end{alignat*}
	for $k \in \mathbb{N}$.
	Taking $k \to \infty$, 
	we have 
	$
		\|
			G(\cdot) \ast u_0
		\|_{Z_{r, t}}
		\to 
		0
	$ 
	as 
	$t\searrow0$.
	Together with \eqref{eq:unipro1}, 
	\eqref{eq:uni-4}, 
	and \eqref{eq:uni-5}, the preceding estimate implies that \eqref{eq:uni-3} holds.
	Therefore, 
	by \eqref{eq:uni-2} and \eqref{eq:uni-3} there exists 
	$
		t_*
		\in
		(0,\min\{ T, 1 \})
	$ 
	such that 
	$
		u(t)
		=
		\widetilde{u}(t)
	$ 
	in 
	$
		L^\frac{N}{2}_{\rm ul}(\mathbb{R}^N)
	$ 
	for 
	$
		t
		\in
		[0,t_*]
	$.
	
	We finally show that 
	$
		u(t)
		=
		\widetilde{u}(t)
	$ 
	in 
	$
		L^\frac{N}{2}_{\rm ul}(\mathbb{R}^N)
	$ 
	for 
	$
		t
		\in
		[0, T)
	$.
	Let 
	\[
		T_*
		\coloneqq 
		\sup
		\left\{
			t
			\in
			(0,T)
			\, \middle| \,
			\text{
				$
					u(s)
					=
					\widetilde{u}(s)
				$ 
				in 
				$
					L^\frac{N}{2}_{\rm ul}(\mathbb{R}^N)
				$ 
				for 
				$
					s
					\in 
					[0,t)
				$
				}
		\right\}.
	\]
	By the previous argument, 
	we have $T_*>0$.
	If $T_*<T$, 
	then the continuity with respect to the time variable implies that 
	$
		u(T_*)
		=
		\widetilde{u}(T_*)
	$ 
	in 
	$
		L^\frac{N}{2}_{\rm ul}(\mathbb{R}^N)
	$.
	Since 
	$
		u(T_*) 
		\in 
		\mathcal{L}^\frac{N}{2}_{\rm ul}(\mathbb{R}^N)
	$, 
	we can apply the previous argument with $u_0$ replaced by $u(T_*)$ and hence
	\[
		u(T_*+t)
		=
		\widetilde{u}(T_*+t) 
		\quad \text{for} \quad 
		0
		<
		t
		\ll 
		1,
	\]
	which contradicts the definition of $T_*$.
	Therefore, 
	$T=T_*$, 
	which gives the desired conclusion of Proposition~\ref{prop:unique}.
\end{proof}
%============================================================================%
\section*{Conflict of Interest}
The authors declare that they have no conflict of interest.

\section*{Acknowledgements}
N. Miyake was supported in part by JSPS KAKENHI Grant Number JP24K16944.
H. Wakui was supported in part by JSPS KAKENHI Grant Number JP25K07080.
T. Yamada was supported in part by JSPS KAKENHI Grant Number JP24K06806.
%============================================================================%
\bibliography{ref}
\bibliographystyle{abbrv}
\end{document}